\makeatletter\newcommand{\note}[1]{{\textcolor{red}{[#1]}}\@latex@warning{Note: #1}}\makeatother
\DeclareSymbolFont{tipa}{T3}{cmr}{m}{n}
\DeclareMathAccent{\invbreve}{\mathalpha}{tipa}{16}
\theoremstyle{plain}
\newtheorem{theorem}{Theorem}
\newtheorem{corollary}[theorem]{Corollary}
\newtheorem{proposition}[theorem]{Proposition}
\newtheorem{lemma}[theorem]{Lemma}
\theoremstyle{definition}
\newtheorem{definition}[theorem]{Definition}
\newtheorem{remark}[theorem]{Remark}
\newcommand{\ind}{\mathbf{1}}
\newcommand{\Z}{\mathbb{Z}}
\newcommand{\Q}{\mathbb{Q}}
\begin{document}
\title{A tree bijection for rigid quadrangulations}
\author{Bart Zonneveld}
\maketitle
\begin{abstract}
	We study the counting problem of rigid quadrangulations, recently introduced by Budd \cite{Budd_rectilinear_2025} and proven to be in bijection with colorful quadrangulations. 
    The generating function for the latter has been derived in an algebraic manner by Bousquet-Mélou and Elvey Price \cite{Bousquet-Melou_generating_2020}, which therefore also counts rigid quadrangulations. 
    In this paper we will provide a direct, bijective proof, for this generating function. 
    We will relate the rigid quadrangulations to some naturally appearing trees, decorated with certain natural data. 
    By some slight bijective manipulation of the data, we get a decorated tree, for which the generating function can be found.

    This result opens the door to better understand the geometry of random rigid quadrangulations (and maybe even of random colorful quadrangulations), by studying the corresponding decorated trees.
    These properties are relevant the formulation of UV complete JT-gravity, following the work of Ferrari \cite{Ferrari_Random_Disk_overview_2025,Ferrari_Random_Disk_lattice_2024}.
\end{abstract}

\section{Introduction}
Starting with the work of Tutte \cite{Tutte_1963} in the sixties, the counting of planar maps has been an active topic in combinatorics. 
An important tool when studying maps are bijections between certain families of maps and certain families of (decorated) trees, first formulated by \cite{cori1981planar,schaeffer1998conjugaison}, later extended to more general cases (e.g. \cite{BDFG2004}).
These bijective approaches often give clearer insight, not only explaining why some generating functions are simple, but potentially also useful to study properties of the maps and even scaling limits (see for example \cite{Miermont2013BrownianSphere}).
These bijections have been extended or specialized to deal with the topology of the disk instead of the sphere (e.g. \cite{Bouttier_2009_quad_with_boundary}) or when extra decorations are added to the map (e.g. \cite{LeGall2011Scaling}).
\\\\
In physics, Ferrari recently motivated that the study of random (possibly) self-overlapping surfaces with constant curvature is essential to study so-called JT-gravity in the UV-limit \cite{Ferrari_Random_Disk_overview_2025,Ferrari_Random_Disk_lattice_2024}. 
Although for the physics community the hyperbolic case is somewhat more relevant, for simplicity, we will focus on the on flat case with topology of the disk.\footnote{For hyperbolic surfaces, there are also tree bijections under development \cite{BZ_hyperbolic_tree_2025}, but the toolbox is not yet large enough to study non-geodesic boundaries in this setting.} 

In a very recent paper \cite{Budd_rectilinear_2025}, Budd proposes a regularization of exactly these flat self-overlapping disks with arbitrary boundary, by restricting the boundary of the disk to a fixed number of straight sides, only joined by corners of $\pi/2$ or $3\pi/2$. 
These disks are called rectilinear disks.

The moduli spaces of rectilinear disks are continuous,\footnote{This follows naturally from the fact that the side-lengths are unconstrained.} yet they can be naturally built from the discrete rigid quadrangulations. 
These rigid quadrangulations, which are the objects that we study in this paper, can be seen as the combinatorial building blocks of rectilinear disks and thus a regularization of a `uniform random flat disk'.

In his paper, Budd shows that the rigid quadrangulations are in bijection with colorful quadrangulations.
These colorful quadrangulations and their dual Eulerian orientations were first described in \cite{Bonichon_number_2017} and \cite{ElveyPrice2018counting}, but have recently been extensively studied by Bousquet-Mélou and Elvey Price \cite{Bousquet-Melou_Eulerian_2020,Bousquet-Melou_generating_2020,Bousquetmelou2025refinedenumerationplanareulerian}.
Notably they were able to derive the generating function of colorful quadrangulations (and some variants of them).
They derived these generating functions by algebraically solving a system of equations, yet the expressions they got motivate the search for a more direct, bijective proof.
In particular, the expressions contain Catalan numbers, which are famously involved in counting trees. 
This hints that either colorful quadrangulations or rigid quadrangulations (or both) could have a natural bijection to (decorated) trees.

In \cite{Bousquet-Melou_generating_2020}, the authors suggest a family of decorated trees that are equinumerous to an important building block for the counting of colorful quadrangulations,\footnote{In \cite{Bousquetmelou2025refinedenumerationplanareulerian} the authors even show that a certain statistic has a natural counterpart in this family of trees.} yet the question of finding a direct bijection is still open. 

In this paper, we construct a bijection between rigid quadrangulations and some specific decorated trees, which differ from the suggested trees in \cite{Bousquet-Melou_generating_2020} and \cite{Bousquetmelou2025refinedenumerationplanareulerian}.\footnote{We will discuss possible connections in section~\ref{sec:sampling_outlook}.}
This allows us to provide a bijective proof of the generating series. 
This tree bijection might also open up new possibilities to study geometric properties of rigid quadrangulations, like the area or diameter, or even properties of colorful quadrangulations.

\subsection{Rigid quadrangulations}\FloatBarrier

\begin{figure}
    \centering
    \includegraphics[width=\linewidth]{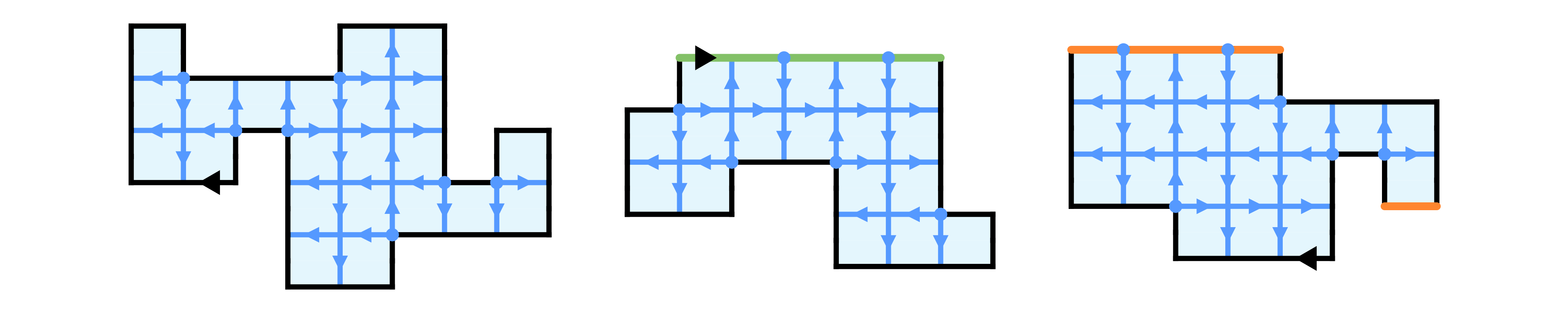}
    \caption{Three examples of rigid quadrangulations. Open sides will be colored orange, except when it is an open base, which we will color green.
    \\
    On the left: A base-$2$ rigid quadrangulation with 32 boundary edges split into 18 sides.  
    It has 11 convex corners and 7 concave corners.
    It has no open sides (orange), so it is complete.
    \\
    In the middle: A base-$(-3)$ rigid quadrangulation with 24 boundary edges split into 12 sides. 
    It has 8 convex corners and 4 concave corners.
    Although it has an open side (green), this is the base (side with the root), so it is still considered complete, but it gets a negative base-length.
    Note that for determining the base-length, we only consider downward rays (for open bases, these are the open rays that start on the base).
    \\
    On the right: A base-$3$ partial rigid quadrangulation with 24 boundary edges split into 12 sides.
    It has 8 convex corners and 4 concave corners.
    It has two open sides (orange), which are not the base, so it is considered partial. 
    Note that the open sides are horizontal and end in both ends in a convex corner, as required.
    They can have open rays, but this is not required.
    \label{fig:Examples}}
\end{figure}

Let's recall the terminology about rigid quadrangulations from \cite{Budd_rectilinear_2025} and add some new terminology.
A \emph{quadrangulation of the disk} is a rooted planar map, where the root is a distinguished oriented edge, such that all faces have degree $4$ except possibly the root face, which is the face on the left of the root.
We require the root face to be simple, such that all of its corners are incident to different vertices.
The contour of the root face is called the \emph{boundary} and the corresponding vertices and edges are called \emph{boundary vertices} and \emph{boundary edges}, while all other vertices and edges are \emph{inner vertices} and \emph{inner edges}.

A quadrangulation of the disk is \emph{flat} if each inner vertex has degree $4$.
Although this can be generalized in different models, we will only consider boundary vertices of degree at most $4$.
Boundary vertices of degree $2$ and $4$ are called convex and concave corners respectively, while a boundary vertex of degree $3$ is called straight.
\\\\
The flat condition, together with the bound on the degrees of boundary vertices, ensures that \emph{locally} we can embed a flat quadrangulation in the regular square grid. 
In general, this embedding does not exist globally: A flat quadrangulation can be immersed in the square grid, but parts of this immersion can overlap. 

In our figures, we will always immerse the disks into the square grid. 
For the clarity in the figures, we avoid examples with self-overlaps.\footnote{Occasionally we let two boundary vertices overlap. Note that these should be seen as two different vertices, even though they are drawn in the same place.}
Yet, we stress that all methods in this paper perfectly work for flat quadrangulations where the immersion has self-overlaps. 
In fact, the methods are fully independent of the immersion into the grid.\footnote{Although many names (horizontal, upward, etc.) will be inspired by the immersion.}
\\\\
A group of consecutive boundary edges between two corners, with only straight vertices in between is called a \emph{side}. 
The side that includes the root is called the \emph{base}.
Note that in \cite{Budd_rectilinear_2025} the definition of base is a bit more general, where it can consist of multiple sides.

Sides are \emph{horizontal} when there is an even number of corners between the root and the side.
If a horizontal side is between two convex corners, it can be marked as \emph{open}.
If a disk has open sides, you should think of it as being incomplete.
It can be completed by gluing disks to these open sides.
\\\\
To go from flat quadrangulations to rigid quadrangulations, we have to look at rays.
A \emph{ray} is a path of inner edges visiting only inner edges, except for its two endpoints, which are boundary vertices. At the inner vertices rays `go straight', meaning that at each inner vertex, two rays will cross.
Note that this implies that every inner edge belongs to a unique ray.

Finally, a \emph{rigid quadrangulation} is a flat quadrangulation such that
\begin{enumerate}
    \item The degree of boundary vertices is at most $4$.
    \item The root edge starts at a convex corner.
    \item Each ray is one of the following types: 
    \begin{itemize}
        \item a \emph{closed} ray, which connects a concave corner to a straight boundary vertex and is naturally oriented from the concave-corner towards the straight boundary vertex, or
        \item an \emph{open} ray, which connects two straight boundary vertices, together with an orientation, such that the ray starts on an open side.
    \end{itemize}
\end{enumerate}
The \emph{size} of an open side is the number of rays starting at that side plus one.

The rigid quadrangulation is said to be \emph{complete} when there is no open side, except for possibly the base. 
If there are non-base open sides, it is called \emph{partial}.
Note that a `rigid quadrangulation' in \cite{Budd_rectilinear_2025} has no open sides (including the base) and thus also no open rays. 
See Figure~\ref{fig:Examples} for some examples. 

\subsection{Based rigid quadrangulations}
A \emph{base-$p$ rigid quadrangulation} is a rigid quadrangulation, such the base starts and ends in a convex corner and $p\in\Z$ is the base-length defined below.

If the base is not an open side, we assign a positive base-length of $p$, where $p$ is the number of edges in the base.

If the base is an open side, we say it has a base-length $-(n+1)$, where $n$ is the number of rays starting on the base. Equivalently the length of an open base is minus its size.

Note the definition of base-length is quite different for positive and negatives base-lengths, yet we will see that they naturally extend each other.

Our definition of a base-$p$ rigid quadrangulation corresponds with the definition in \cite{Budd_rectilinear_2025} for $p>0$. 
In \cite{Budd_rectilinear_2025} this definition is extended to allow for $k$-fold bases, which we won't consider.
On the other hand, our extension to $p<0$ is not considered in \cite{Budd_rectilinear_2025}.
\\\\
In the figures, we will draw the base as a bottom side if it is not open and as a top side if it is open.
In accordance with this convention, we call a horizontal side a \emph{bottom side} when either
\begin{itemize}[nosep]
    \item the base is not open and the difference between the number of convex and concave corners between the base and the side is not a multiple of 4, or
    \item the base is open and the difference between the number of convex and concave corners between the base and the side is a multiple of 4.
\end{itemize}
Naturally, a horizontal side, which is not a bottom side, is a \emph{top side}.   
Similarly, a ray that ends on a bottom side, will be called \emph{downward}, while rays ending on a top side are \emph{upward}. 

In retrospect, we can now see why the definition of base-lengths for open and non-open base are each other's extensions: They count the length of the base, only considering downward rays, ignoring upward rays, where we add a minus sign for open bases. 
\\\\
The set of complete base-$p$ rigid quadrangulations with $2n$ corners\footnote{Equivalently, it has $n$ convex corners not incident to the base.} is denoted $\mathcal{R}^{(p)}_n$ and the corresponding generating functions are
\begin{equation}
    F^{(p)}(t) = \sum_{n=0}^\infty  |\mathcal{R}^{(p)}_{n}|\,t^n.
\end{equation}
By convention, we add a single base-$0$ rigid quadrangulation with $n=1$ and thus set $F^{(0)}(t) = t$. 
It is useful to see this `disk' as just a single convex corner.

The main theorem of this paper is as follows:
\begin{theorem}\label{thm:gen_fun_rigid}
    Let $R(t)=t-2t^2-4t^3-20t^4-\ldots\in\Z[[t]]$ be the unique formal power series solution to
    \begin{align}\label{eq:R_def}
        \sum_{n\geq0}\frac{1}{n+1}\binom{2n}{n}\binom{2n}{n} R(t)^{n+1} = t,
    \end{align}
    with $[t^0]R(t)=0$.

    The generating function of the number of (complete) base-$p$ rigid quadrangulations is given by
    \begin{align}
        F^{(p)}(t) =
        \begin{dcases}
            \sum_{n\geq p}\frac{1}{n+1}\binom{2n}{n}\binom{2n-p}{n-p} R(t)^{n+1}&\textrm{ for }p>0\\
            \sum_{n\geq 1}\frac{1}{n+1}\binom{2n}{n}\binom{2n-p-1}{n-p} R(t)^{n+1}&\textrm{ for }p<0\\
        \end{dcases}.
    \end{align}
\end{theorem}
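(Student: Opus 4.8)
The strategy is to establish the theorem by constructing an explicit bijection between complete base-$p$ rigid quadrangulations and a family of decorated plane trees whose generating function can be computed directly. First I would define a ``peeling along rays'' decomposition: starting from the base, one reads off how the closed and open rays emanating from the boundary partition the quadrangulation, and each ray (or each straight vertex) gives rise to a child subtree. The key local picture is that at each side one records the number of downward rays landing on it and the way the concave corners distribute among the recursive pieces; this data is exactly what turns the quadrangulation into a tree with natural decorations. The auxiliary series $R(t)$ should emerge as the generating function of the ``elementary'' decorated tree --- the building block attached at a single convex corner --- which is why it satisfies the implicit equation \eqref{eq:R_def}: the two binomial factors $\binom{2n}{n}$ count, respectively, the cyclic arrangement of rays around the block and the choice of which rays are upward versus downward (or some such pair of Catalan-flavoured choices), and the $\frac{1}{n+1}$ is the usual plane-tree cyclic-symmetry factor.

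Concretely, the steps in order would be: (1) prove a decomposition lemma showing every complete base-$p$ rigid quadrangulation splits uniquely, along the rays hitting the base, into a sequence of smaller rigid quadrangulations glued at convex corners, with a bookkeeping rule for how $p$ changes; (2) iterate this to obtain a tree structure, and identify the decoration set precisely, checking that the gluing is reversible (this is where rigidity --- the constraint that every ray is either closed with the stated endpoints or open starting on an open side --- does the work of making the reconstruction unambiguous); (3) compute the generating function of the elementary block, obtaining $R(t)$ via a Lagrange-inversion argument from \eqref{eq:R_def}; (4) assemble the generating function for base-$p$ objects by summing over the tree, where the shift by $p$ in the upper binomial arguments $\binom{2n-p}{n-p}$ (resp. $\binom{2n-p-1}{n-p}$ for $p<0$) comes from fixing the number of downward rays on the base to encode the base-length, and the overall $R(t)^{n+1}$ records the $n+1$ elementary blocks (equivalently, the $n$ internal nodes plus the root). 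The negative-$p$ case should fall out of the same computation by reinterpreting an open base as ``minus its size'', exactly as the paper's remark about the two base-length conventions extending each other anticipates; I would verify the two binomial expressions agree in the appropriate formal sense and match the low-order terms $R(t)=t-2t^2-4t^3-20t^4-\ldots$.

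The main obstacle I anticipate is step (2): making the ray-decomposition into a genuine bijection rather than just an injection or a counting identity. The delicate points are (a) handling open sides and open rays correctly --- an open ray carries an orientation and must start on an open side, so when one cuts along rays the ``open'' status has to be tracked and transferred consistently to the pieces, and partial quadrangulations will appear as intermediate objects even though the theorem is about complete ones; (b) the case analysis for how $p$ transforms under decomposition, since the definition of base-length is genuinely different in sign for open versus non-open bases, so the recursion must be set up so that this distinction is an automatic consequence rather than an imposed rule; and (c) proving surjectivity, i.e.\ that every decorated tree in the target family actually reconstructs a valid rigid quadrangulation with no extra constraints violated (in particular that the reconstructed rays really are all either closed or open-from-an-open-side). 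Once the bijection is pinned down, the generating-function computation is a routine application of Lagrange inversion to \eqref{eq:R_def} together with a standard ``sequence of blocks around a root'' argument, and the two binomial formulas for $p>0$ and $p<0$ should drop out by specializing the root-block decoration to encode $|p|$ downward rays on the base.
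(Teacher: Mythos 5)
Your first phase --- cutting along the rays at the base to obtain a decorated binary tree, checking that the gluing is reversible, and tracking how the base-length transforms --- is essentially the paper's route (its minimal-submap lemma and the bijection with what it calls H-trees). The genuine gap is in your steps (3)--(4). You treat $R(t)$ as the generating function of an ``elementary block'' and expect the enumeration to follow from Lagrange inversion plus a sequence-of-blocks argument; this cannot work as stated, because $R(t)=t-2t^2-4t^3-20t^4-\ldots$ has negative coefficients and therefore is not the counting series of any sub-family of quadrangulations or trees. In the paper, \eqref{eq:R_def} instead expresses that $R$ is the compositional inverse of $\hat{Q}^{(0)}(t)=\sum_{n\geq 0}\frac{1}{n+1}\binom{2n}{n}\binom{2n}{n}t^{n+1}$, the generating function of a \emph{relaxed} family of labeled trees (pre-Q-trees: all vertex labels zero, leaf-edge labels non-positive), in which one binomial is the Catalan number of binary tree shapes and the other counts distributions of non-positive leaf labels with prescribed sum --- not your guessed ``cyclic arrangement of rays / upward-versus-downward'' interpretation.

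What your plan is missing are the two ideas that actually make the enumeration go through. First, the trees produced directly by the ray decomposition carry awkward label constraints (the vertex labels recording the signature data may be positive only at ``bottom'' vertices), and the paper removes these by a label-shifting bijection along canonical paths of non-positive edges, landing in Q-trees, i.e.\ pre-Q-trees subject only to a ``strong path'' condition. Second, a core decomposition shows that every pre-Q-tree is a Q-tree with base-$0$ pre-Q-trees grafted at its leaves, giving $\hat{Q}^{(p)}(t)=Q^{(p)}\bigl(\hat{Q}^{(0)}(t)\bigr)$ and hence $Q^{(p)}(t)=\hat{Q}^{(p)}(R(t))$ by substituting the inverse series; this substitution, not Lagrange inversion of a block equation, is where \eqref{eq:R_def} enters. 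Finally, the case $p<0$ does not ``fall out of the same computation'': it requires a separate bijection sending the union of H-trees of base-lengths $p\leq p'\leq 0$ onto Q-trees of base-length $p$, so that $H^{(p)}=Q^{(p)}-Q^{(p+1)}$, which is exactly what produces the modified binomial $\binom{2n-p-1}{n-p}$. Without these ingredients your steps (3)--(4) do not constitute a proof.
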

We will prove this theorem using a tree bijection.

It is already noted in \cite{Budd_rectilinear_2025} that $F^{(1)}(t)$ is related to the generating function of rigid quadrangulations that are not based (just rooted at a convex corner, with no other restrictions):
\begin{align}
    Z(t)=\frac{F^{(1)}(t)-t^2}{2t}.
\end{align} 
Furthermore, it follows directly from the definition of a negative base-length that $F^{(-1)}(t)=\sum_{p=1}^{\infty}F^{(p)}(t)$, which is a good consistency check for Theorem~\ref{thm:gen_fun_rigid}.

\subsection{Double based rigid quadrangulations}\FloatBarrier

We can use the tree bijection to expand on this result to some restricted rigid quadrangulations.

\begin{figure}
    \centering
    \includegraphics[width=\linewidth]{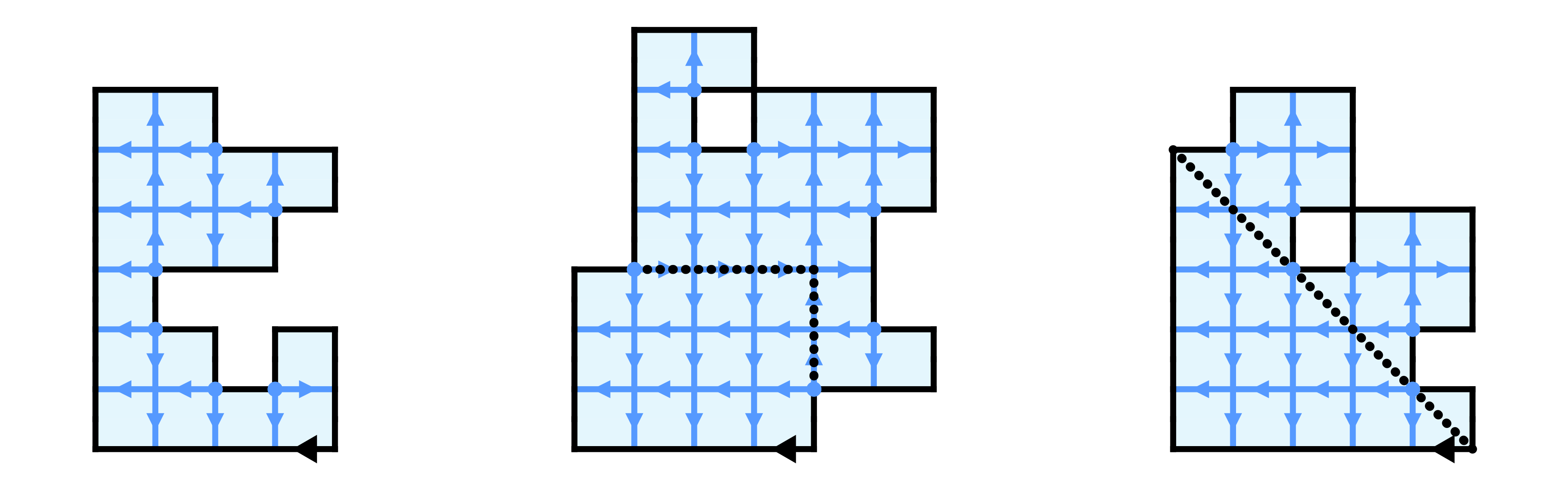}
    \caption{A B-type, a C-type and a $\Delta$-type rigid quadrangulation.
    The B-type rigid quadrangulation has base-length $4$ and co-base-length $6$.
    The C-type rigid quadrangulation has base-length $4$ and co-base-length $3$.
    The $\Delta$-type rigid quadrangulation has base-length $5$ and co-base-length $5$ and degeneracy $3$.
    \label{fig:BCD}}
\end{figure}

A \emph{double based rigid quadrangulation} (or a \emph{B-type rigid quadrangulation}, due to its correspondence to B-patches in \cite{Bousquet-Melou_generating_2020}) is a rigid quadrangulation, such that base and the side following the base clockwise around the boundary begin and end in convex corners. 
The side following the base in clockwise direction will be called the \emph{co-base}. 
We will only consider double based rigid quadrangulations where the base and co-base are closed sides, and we will only assign positive (co-)base-lengths to them, equal to the number of edges that they contain.

A \emph{C-type rigid quadrangulation} is a B-type rigid quadrangulation, such that the base and co-base are two sides of a rectangle embedded in the disk.
More precisely, in a C-type rigid quadrangulation with base-length $p$ and co-base-length $q$, the $(p-1)$ downward rays ending at the base consist of at least $q$ edges, or equivalently, the $(q-1)$ horizontal rays ending at the co-base consist of at least $p$ edges.

A \emph{$\Delta$-type rigid quadrangulation} is a B-type rigid quadrangulation, such that the base and co-base are two sides of a right-angled triangle embedded in the disk.
More precisely, in a $\Delta$-type rigid quadrangulation with base-length $p$ and co-base-length $q$, the $k$th downward ray  ending at the base ($1\leq k\leq p-1$, where the ray closest to the root is the first, following in clockwise order) consists of at least $k q/p$ edges. 
Or equivalently, the $k$th horizontal ray ending at the co-base ($1\leq k\leq q-1$, where the ray closest to the base is the first, following in clockwise order) consists of at least $k p/q$ edges. 
The \emph{degeneracy} of a $\Delta$-type rigid quadrangulation is the number of concave corners that lay on the hypotenuse of the triangle, plus one.
Or again more precisely, it is the number of values of $k$ for which the $k$th downward ray ending at the base exactly has $k q/p$ edges, plus one, or equivalently, it is the number of values of $k$ for which the $k$th horizontal ray ending at the co-base exactly has $k p/q$ edges, plus one. 
Note that when $p$ and $q$ are co-prime, the degeneracy is guaranteed to be one.

\begin{theorem}\label{thm:D_B_C_gfs}
    Let $\Delta(t,x,y)\in\Q[x,y][[t]]$ be the generating function of (complete) $\Delta$-type rigid quadrangulations weighted by the inverse of their degeneracies, where $t$ counts the number of convex corners, not incident to the base or the co-base, $x$ counts the co-base-length and $y$ counts the base-length.
    Then
    \begin{align}
        \Delta(t,x,y)=\sum_{n\geq0}\sum_{j=0}^n\sum_{i=0}^n\frac{1}{n+1}\binom{2n-i}{n}\binom{2n-j}{n}x^{i+1}y^{j+1}R^{n+1},
    \end{align} 
    where $R=R(t)$ is the solution of equation \eqref{eq:R_def} as before.

    Moreover, let $B(t,x,y)\in\Z[x,y][[t]]$ and $C(t,x,y)\in\Z[x,y][[t]]$ be the generating functions of (complete) B- and C-type rigid quadrangulations respectively, where $t$ counts the number of convex corners, not incident to the base or the co-base, $x$ counts the co-base-length and $y$ counts the base-length.
    Then
    \begin{align}
        B(t,x,y)&=\exp(\Delta(t,x,y))-1,\\
        C(t,x,y)&=1-\exp(-\Delta(t,x,y)).
    \end{align}
\end{theorem}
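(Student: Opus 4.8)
The plan is to prove Theorem~\ref{thm:D_B_C_gfs} in two stages: first to evaluate $\Delta(t,x,y)$ directly from the tree bijection, and then to deduce $B=e^{\Delta}-1$ and $C=1-e^{-\Delta}$ from a combinatorial decomposition of doubly based quadrangulations into C-type pieces, whose translation to generating functions is the exponential formula $e^{-\Delta}=1-C$.

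For the formula for $\Delta$, I would apply the tree bijection of the preceding sections to $\Delta$-type quadrangulations, now treating \emph{both} the base and the co-base as based sides. In a $\Delta$-type quadrangulation the $(p-1)$ downward rays and the $(q-1)$ rays into the co-base are each constrained to fit inside a right triangle, so on the tree side the base contributes one ``staircase'' decoration counted by a single binomial factor $\binom{2n-j}{n}$ (with $y^{j+1}$ recording the base-length) and the co-base contributes an independent such decoration counted by $\binom{2n-i}{n}$ (with $x^{i+1}$ recording the co-base-length), exactly as a single factor $\binom{2n-p}{n-p}$ appeared in $F^{(p)}(t)$ in Theorem~\ref{thm:gen_fun_rigid}; the underlying tree skeleton, with $n$ the number of interior convex corners and $R=R(t)$ its generating series, supplies the $\tfrac1{n+1}R^{n+1}$. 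The only genuinely new point here is the weighting by inverse degeneracy: a $\Delta$-type quadrangulation of degeneracy $d$ is the image of exactly $d$ decorated trees — intuitively the $d$ successive ``steps'' cut from the triangle by the concave corners on its hypotenuse can each be taken as the marked one — so weighting each quadrangulation by $1/d$ makes the correspondence weight-preserving and produces the stated triple sum.

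For the two exponential identities I would establish a decomposition of B-type quadrangulations into C-type ``blocks''. Reading around the boundary from the root, a B-type quadrangulation should cut \emph{uniquely} into a (possibly empty) ordered sequence of indecomposable pieces, and these indecomposables should be precisely the C-type quadrangulations, the cut being weight-preserving in the sense that co-base-lengths add up (matching $x$), base-lengths add up (matching $y$), and interior convex corners add up (matching $t$), up to the boundary-corner conventions fixed in the definitions. This gives $1+B=\sum_{m\ge 0}C^{m}=(1-C)^{-1}$. The same principle applied to $\Delta$-type quadrangulations should give a bijection between $\Delta$-type quadrangulations of degeneracy $d$ and ordered $d$-tuples of C-type quadrangulations (possibly realised through the trees rather than by a naive geometric cut, since a degeneracy-$1$ $\Delta$-type quadrangulation need not itself be C-type); grouping by degeneracy and weighting by $1/d$ then yields $\Delta=\sum_{d\ge 1}\tfrac1d\,C^{d}=-\log(1-C)$. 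Combining the two, $e^{-\Delta}=1-C$ and $e^{\Delta}=(1-C)^{-1}=1+B$, which are exactly the asserted formulas; note that this identifies C-type quadrangulations as the ``prime'' objects, B-type as their sequences and degeneracy-weighted $\Delta$-type as their logarithm, paralleling the relation between C-patches and B-patches in \cite{Bousquet-Melou_generating_2020}.

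The main obstacle is making these decompositions precise and bijective, together with the corner bookkeeping at the seams. Concretely: one must locate canonically the boundary positions at which a B-type quadrangulation is reducible; show that each block of a B-type quadrangulation really does satisfy the rectangle condition defining a C-type quadrangulation; verify that re-gluing reconstructs the rays, the boundary and, for the $\Delta$-to-C-tuples correspondence, the precise value $d$ of the degeneracy; and check that every convex or concave corner created or absorbed at a seam is accounted for so that $t$, $x$ and $y$ track the three statistics with no off-by-one error. The deduction of the formula for $\Delta$ from the tree bijection is then essentially a rerun of the computation proving Theorem~\ref{thm:gen_fun_rigid} with a second based direction, and once the decompositions are in hand the exponential identities follow immediately.
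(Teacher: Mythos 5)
Your two exponential identities are consistent with the theorem as generating-function statements, but the two steps that carry the actual content are missing or wrong. First, the formula for $\Delta$ is not derived. The claim that ``a $\Delta$-type quadrangulation of degeneracy $d$ is the image of exactly $d$ decorated trees'' contradicts the bijection you invoke (under Theorem~\ref{thm:bijection_rigid_quad_H_tree} composed with $\psi$ each quadrangulation has exactly \emph{one} tree), and even on its own terms it gives the wrong weighting: if trees counted by $\frac{1}{n+1}\binom{2n-i}{n}\binom{2n-j}{n}$ covered the quadrangulations with fibres of size $d$, that coefficient would equal $\sum_{\mathbf r} d(\mathbf r)$, not the required $\sum_{\mathbf r} 1/d(\mathbf r)$. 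What is actually needed (Propositions~\ref{prop:BCD_disk_to_Qtree} and \ref{prop:D_counting} in the paper) is: characterize the image Q-trees as those with a spine of $i+1$ consecutive left ``co-base'' edges whose labels obey $a_k\geq p-kp/q$; count the \emph{unconstrained} pre-Q-trees with this spine as $\frac{i+1}{n+1}\binom{2n-i}{n}$ (a Catalan convolution over the $i+1$ subtrees --- not a second independent ``staircase decoration'') times $\binom{2n-j}{n}$ for the leaf labels; then run a cycle-lemma argument, cyclically rotating the $i+1$ subtrees along the spine: each orbit of size $i+1$ contains exactly $d$ admissible rotations, all yielding degeneracy-$d$ objects, so counting orbits produces precisely the $1/d$-weighted count and absorbs the factor $i+1$. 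One must also check that the pre-Q-to-Q decomposition of Lemma~\ref{lem:decomp} never cuts the spine (its labels stay positive), so that replacing $t$ by $R(t)$ is legitimate. None of this is in your sketch, and the multiplicity statement as written is false.

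Second, the link between $\Delta$ and $B,C$. Your decomposition of a B-type quadrangulation into a nonempty sequence of C-type blocks, giving $1+B=(1-C)^{-1}$, is exactly the paper's. But your other ingredient, a weight-preserving correspondence between degeneracy-$d$ $\Delta$-type quadrangulations and $d$-tuples of C-type quadrangulations (i.e.\ $\Delta=-\log(1-C)$), is precisely the step you concede you cannot realize: a degeneracy-one $\Delta$-type quadrangulation need not be C-type, so no naive cut exists, and you offer no tree-level construction either; hence $e^{\Delta}=1+B$ is not established in your argument. The paper instead proves $B=\exp(\Delta)-1$ directly, cutting a B-type quadrangulation along the shortest lattice path from the top of the co-base to the right end of the base into $\Delta$-type pieces ordered by slope. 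The genuine subtlety there --- equal-slope segments admit several decompositions, whose pieces can have degeneracy larger than one --- is resolved by the identity $\sum_{\sum_k k n_k=m}\prod_{k\geq1}\frac{1}{k^{n_k}n_k!}=1$ (Lemma~\ref{lem:mult_sum}), which shows that with the $1/\mathrm{degeneracy}$ weights all these decompositions collectively count each B-type quadrangulation exactly once; $C=1-\exp(-\Delta)$ then follows from the sequence relation. That cancellation (or an honest bijection behind $\Delta=-\log(1-C)$) is the content your proposal is missing, so the second half of the theorem remains unproven as well.
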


The first few terms of the series above are 
\begin{align}
    \Delta(t,x,y)&=xyt+(xy^2+x^2(y+y^2/2))t^2+(x(2y^2+2y^3)+x^2(2y+y^2+y^3)+x^3(2y+y^2+y^3/3))t^3+\ldots\\
    B(t,x,y)&=xyt+(xy^2+x^2(y+y^2))t^2+(x(2y^2+2y^3)+x^2(2y+y^2+2y^3)+x^3(2y+2y^2+y^3))t^3+\ldots\\
    C(t,x,y)&=xyt+(xy^2+x^2y)t^2+(x(2y^2+2y^3)+x^2(2y+y^2)+2x^3y)t^3+\ldots \quad.
\end{align}

\subsection{Sketch of proof and outline}\FloatBarrier

\begin{figure}
    \centering
    \includegraphics[width=\linewidth]{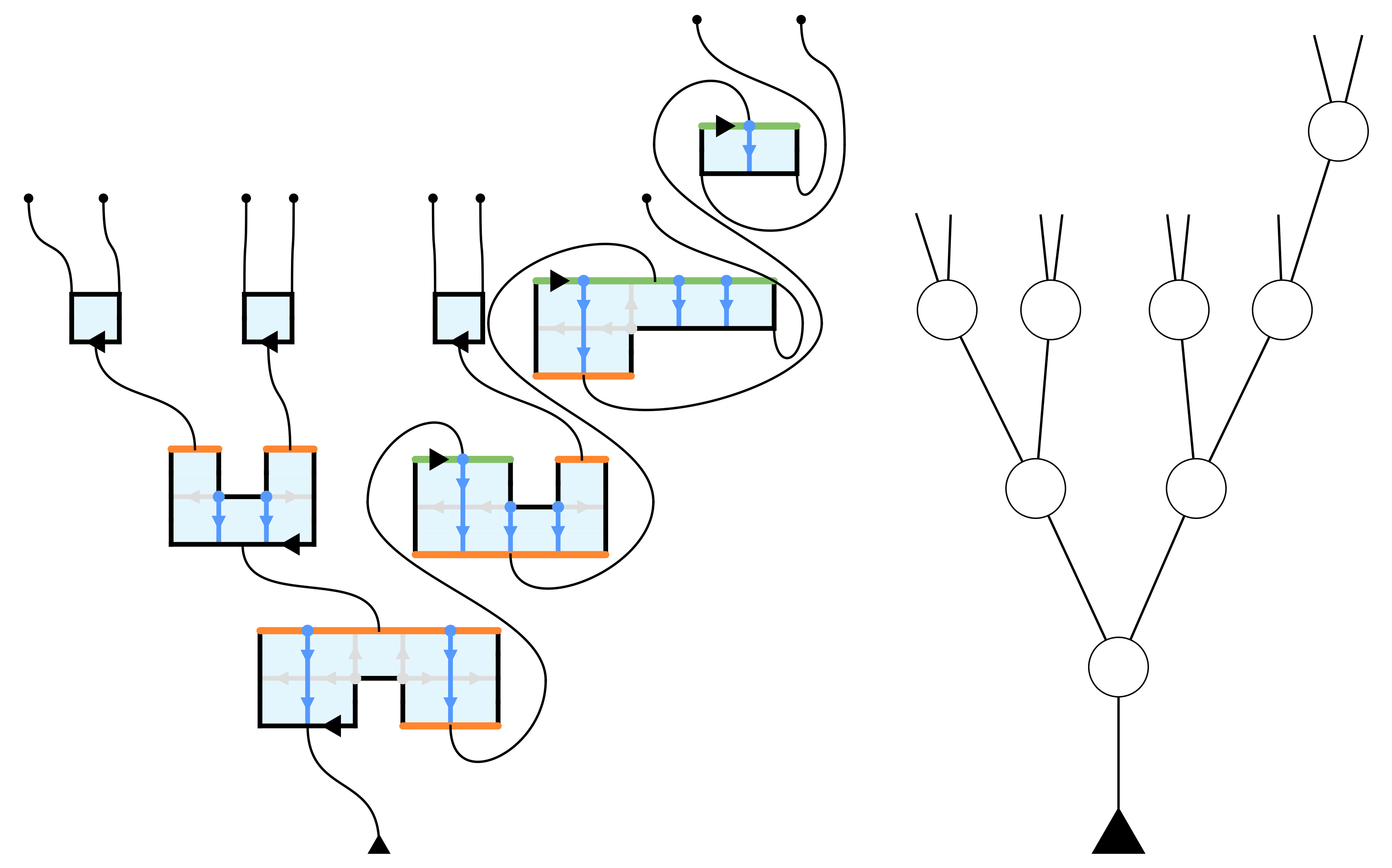}
    \caption{Idea of the tree bijection. 
    On the left we have the decomposition of the left disk from Figure~\ref{fig:Examples} into slices. 
    On the right we have the tree structure of this decomposition.  
    \label{fig:Tree_structure}}
\end{figure}

As shown in Figure~\ref{fig:Tree_structure}, we will obtain from rigid quadrangulation a tree structure, by decomposing the disk in slices, similar to the row-by-row exploration in \cite{Budd_rectilinear_2025}, where the convex corners\footnote{Except those adjacent to the root.} will act as leaves.
Since the decomposition of different disks can give the same tree structure, this tree itself is not enough for a bijection. 
Later we will see what data should be added to the tree to make this a bijection. 

To do this, we first need to define how exactly we glue / cut rigid quadrangulations and how we can describe the slices that we get after the decomposition.
We will do this in section~\ref{sec:gluing}.

In section~\ref{sec:H_trees}, we will define H-trees, which are the binary trees from Figure~\ref{fig:Tree_structure}, decorated with the correct data. 
In this section we will prove that these H-trees are in bijection with rigid quadrangulations.

The H-trees are not easy to analyze directly. 
It appears that we can do a small manipulation of the added data, such that the result is somewhat easier.
These manipulated trees are called Q-trees, and they are the topic of section~\ref{sec:Q_trees}.

In section~\ref{sec:enumeration}, we will actually find the generating function of Q-trees, from which the generating function of H-trees (and thus the generating function of rigid quadrangulations) immediately follows, proving Theorem~\ref{thm:gen_fun_rigid}.

In section~\ref{sec:BCD}, we apply the methods of previous sections to the B-, C- and $\Delta$-type rigid quadrangulations, proving Theorem~\ref{thm:D_B_C_gfs}. 

Finally, in section~\ref{sec:sampling_outlook}, we briefly touch upon sampling uniform random rigid quadrangulations using the tree bijection and give some outlook on open questions.

\FloatBarrier
\subsection*{Acknowledgments}
This work is supported by the VIDI programme with project number VI.Vidi.193.048, which is financed by the Dutch Research Council (NWO).
Furthermore, the author wants to thank Timothy Budd and Andrew Elvey Price for the fruitful discussions on this topic.

\FloatBarrier
\clearpage
\section{Gluing partial rigid quadrangulations}\label{sec:gluing}
\FloatBarrier

In \cite{Budd_rectilinear_2025} it is described how partial rigid quadrangulations can be glued.
We will use that gluing prescription (which we will repeat below for completeness) only for gluing disks to open top sides, while we use a slightly different gluing rule for gluing disks to open bottom sides.

\begin{figure}[p]
    \centering
    \includegraphics[width=\linewidth]{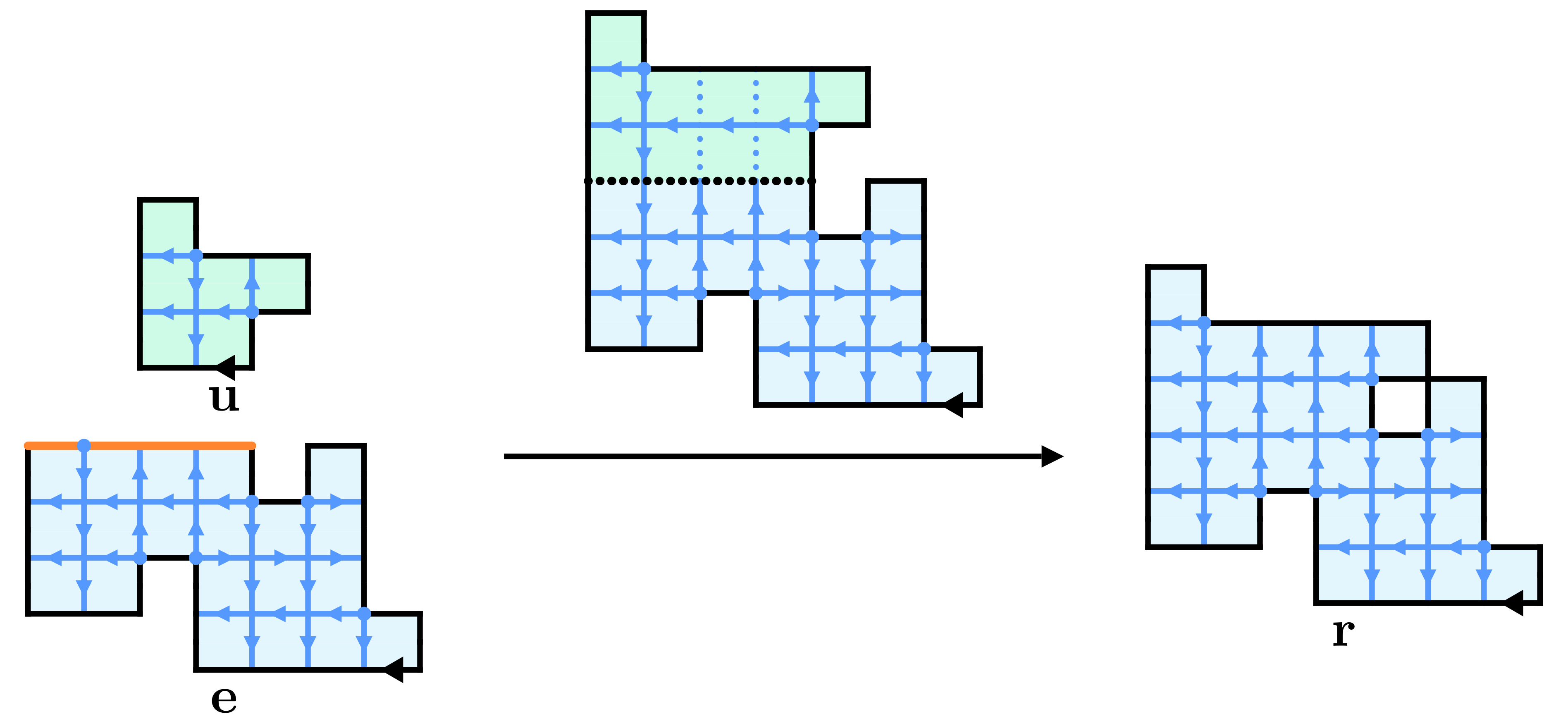}
    \caption{The gluing of the disk $\mathbf{u}$ to an open top side of $\mathbf{e}$.
    Note that only the number of downward rays should match on the two sides that are glued. 
    The upward rays of $\mathbf{e}$ are extended into $\mathbf{u}$.
    \label{fig:Positive_gluing}}
\end{figure}

To an open \emph{top} side $s$ of size $\ell$, we can naturally glue a base-$\ell$ rigid quadrangulation $\mathbf{u}$ (see Figure~\ref{fig:Positive_gluing}).
We do this by identifying the base of $\mathbf{u}$ with the side $s$ such that the $(\ell-1)$ downward rays in $\mathbf{e}$ that start at $s$ align with the $(\ell-1)$ downward rays of $\mathbf{u}$ ending at its base.
Consecutively, we extend the upward rays ending in $s$ vertically, until they hit a horizontal boundary of $\mathbf{u}$.
Finally, we merge the quadrangles above and below $s$, by removing the edges and vertices of $s$.

\begin{figure}[p]
    \centering
    \includegraphics[width=\linewidth]{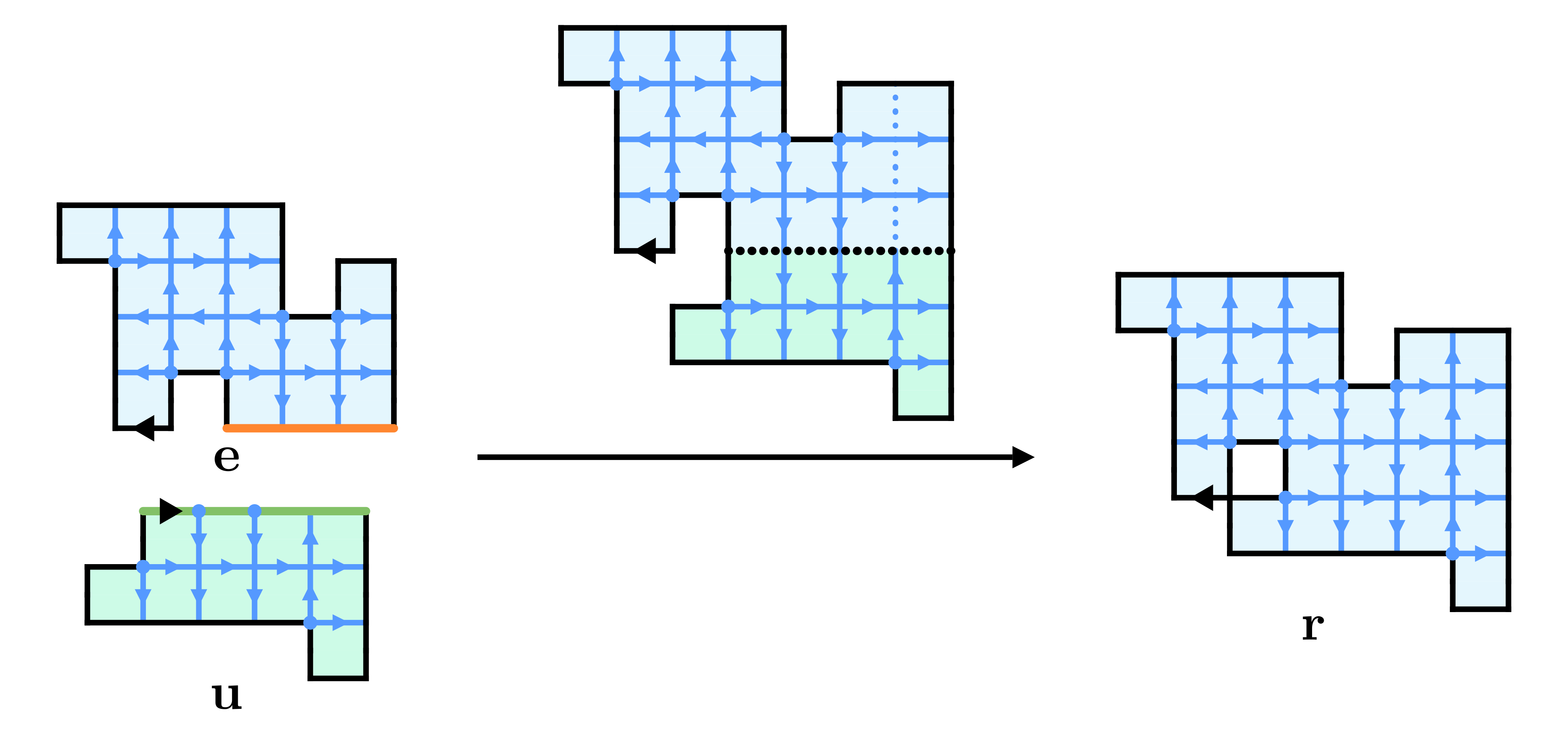}
    \caption{The gluing of the disk $\mathbf{u}$ to an open bottom side of $\mathbf{e}$.
    Note that again only the number of downward rays should match on the two sides that are glued. 
    The upward rays of $\mathbf{u}$ are extended into $\mathbf{e}$.
    \label{fig:Negative_gluing}}
\end{figure}

To an open \emph{bottom} side $s$ of size 1 (so no upward rays) and length $\ell$, we can naturally glue a base-$(-\ell)$ rigid quadrangulation $\mathbf{u}$.
We do this by identifying the base of $\mathbf{u}$ with the side $s$ such that the $(-\ell-1)$ downward rays in $\mathbf{e}$ that end at $s$ align with the $(-\ell-1)$ downward rays of $\mathbf{u}$ starting at its base.
Consecutively, we extend the upward rays ending at the base of $\mathbf{u}$ vertically, until they hit a horizontal boundary of $\mathbf{e}$.
Finally, we merge the quadrangles above and below $s$, by removing the edges and vertices of $s$.
See Figure~\ref{fig:Negative_gluing} for an example.

Note that we have no gluing rule for open bottom sides with sizes larger than 1, as they will not appear in our decomposition.

We call a base-$p$ rigid quadrangulation $\mathbf{e}$ a submap of base-$p$ rigid quadrangulation $\mathbf{r}$ (denoted $\mathbf{e}\subset\mathbf{r}$), if there exists a tuple of rigid quadrangulations $(\mathbf{u}_s)_{s\in S}$, where $S$ is a subset of the non-base open sides of $\mathbf{e}$, such that gluing all $\mathbf{u}_s$ to the corresponding $s$ gives $\mathbf{r}$.

Note that due to the different gluing rule for bottom open sides, the notion of submap in this paper is different from the definition in \cite{Budd_rectilinear_2025}.

Given $\mathbf{e}\subset\mathbf{r}$, the rigid quadrangulations $\mathbf{u}_s$ are uniquely determined, so, for fixed $\mathbf{e}$, the set $\{ \mathbf{r}\in\mathcal{R}_n^{(p)} : n>0 , \mathbf{e}\subset \mathbf{r}\}$ is in bijection with tuples $(\mathbf{u}_s)_s$ of rigid quadrangulations of appropriate base-lengths.

We can now modify \cite[Lemma~11]{Budd_rectilinear_2025} to the current gluing rules:
\begin{lemma}\label{lem:minimal_signed_submaps}
    Every complete base-$p$ rigid quadrangulation $\mathbf{r}$ with $p>0$ contains exactly one of the positive minimal submaps from Figure~\ref{fig:Minimal_subdisks_pos} and described below:

    \begin{figure}[p]
        \centering
        \includegraphics[width=\linewidth]{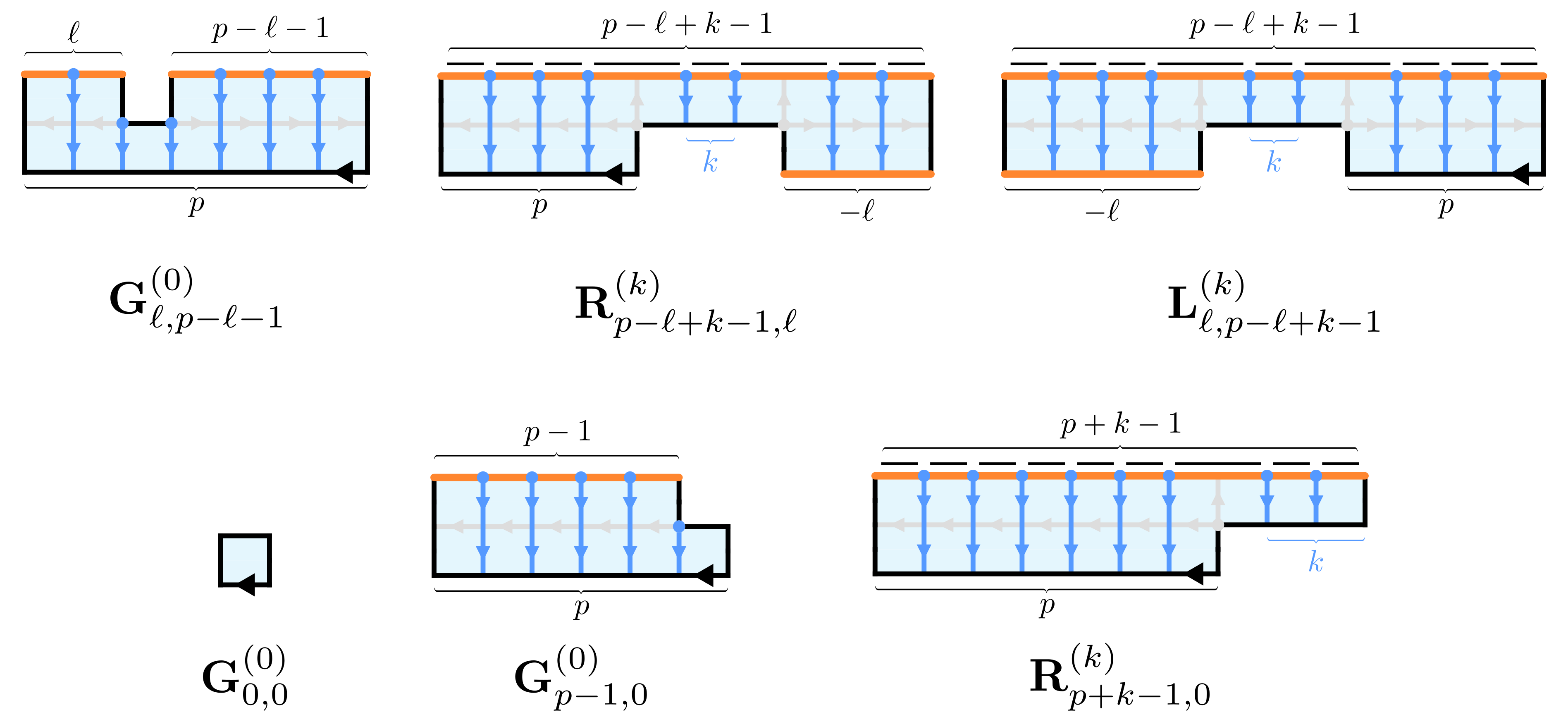}
        \caption{Top row: The three different types of positive minimal submaps. 
        Bottom row: Some degenerate cases where we get one or two convex corners instead of an open side.
        Note that we ignore upward rays when determining the ``lengths'' of sides.
        Also note that for $k$, we count the number of downward rays hitting the horizontal side in the middle (so $k=2$ in the two cases depicted in the top row), where we also add one in the degenerate case (so $k=3$ in the last case of the bottom row).
        \label{fig:Minimal_subdisks_pos}}
    \end{figure}

    \begin{itemize}
        \item $\mathbf{G}^{(0)}_{\ell,p-\ell-1}$, which is u-shaped and has two open top sides of sizes $\ell$ and $p-\ell-1$ for some $\ell = 0,\ldots,p-1$. 
        In the boundary cases $\ell=0$ or $\ell=p-1$ (or both) there is a convex corner instead of an open top side.
        \item $\mathbf{R}^{(k)}_{p-\ell+k-1,\ell}$, where $\ell\leq0$ and $k\geq0$ and $(k,\ell)\neq(0,0)$. 
        It is n-shaped, has an open top side of size $p-\ell+k-1$ and an open bottom side of size 1 and length $-\ell\geq 1$ to the right of the base. 
        In the boundary case $\ell=0$, there is a convex corner instead of an open bottom side.  
        \item $\mathbf{L}^{(k)}_{\ell,p-\ell+k-1}$ is the same as $\mathbf{R}^{(k)}_{p-\ell+k-1,\ell}$, except that the open bottom side (or convex corner) is to the left of the base. 
    \end{itemize}
    Every complete base-$p$ rigid quadrangulation $\mathbf{r}$ with $p<0$ contains exactly one of the following negative minimal submaps from Figure~\ref{fig:Minimal_subdisks_neg} and described below:

    \begin{figure}[p]
        \centering
        \includegraphics[width=\linewidth]{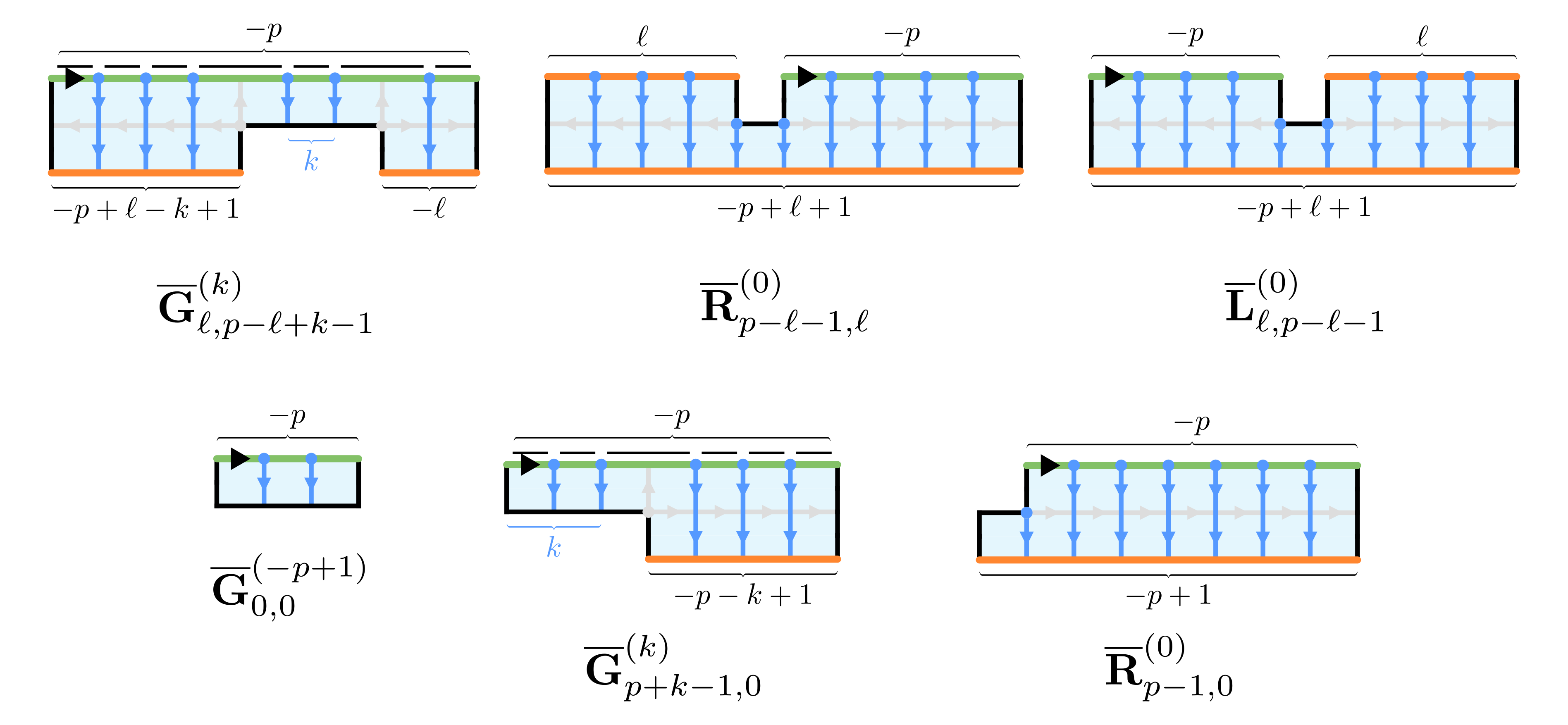}
        \caption{Top row: The three different types of negative minimal submaps. 
        Bottom row: Some degenerate cases where we get one or two convex corners instead of an open side.
        Note again that we ignore upward rays when determining the ``lengths'' of sides.
        Again, note that for $k$, we count the number of downward rays hitting the horizontal side in the middle (so $k=2$ in the upper left case), where we also add one per degeneracy (so $k=4$ and $k=3$ respectively in the first two cases of the bottom row).
        \label{fig:Minimal_subdisks_neg}}
    \end{figure}

    \begin{itemize}
        \item $\overline{\mathbf{G}}^{(k)}_{\ell,p-\ell+k-1}$ which is n-shaped and has two open bottom sides of size 1 and lengths $p-\ell+k-1$ and $\ell$ for some $\ell = p+k-1,\ldots,0$ and $k=0,\ldots,-p+1$. 
        In the boundary cases $\ell=0$ or $\ell=p+k-1$ (or both) there is a convex corner instead of an open bottom side and $k$ will be at least $1$ (resp. $2$).
        \item $\overline{\mathbf{R}}^{(0)}_{p-\ell-1,\ell}$ where $\ell\geq0$. 
        It is u-shaped, has an open bottom side of size 1 and length $p-\ell-1$ and an open top side of size $\ell\geq 1$ to the left of the base.\footnote{This is on the right as seen downwards from the base, explaining the convention.} 
        In the boundary case $\ell=0$, there is a convex corner instead of an open top side. 
        \item $\overline{\mathbf{L}}^{(0)}_{\ell,p-\ell-1}$ is the same as $\overline{\mathbf{R}}_{p-\ell-1,\ell}$, except that the open top side appears to the right of the base.
    \end{itemize}
\end{lemma}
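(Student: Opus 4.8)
The plan is to adapt the row-by-row exploration behind \cite[Lemma~11]{Budd_rectilinear_2025} to the gluing conventions of this section and to the new negative-base case. Fix a complete base-$p$ rigid quadrangulation $\mathbf{r}$. Walking along the boundary from the root, its first side is the base, which by assumption is a horizontal side with convex corners at both ends (a bottom side when $p>0$, an open top side when $p<0$); there are $|p|-1$ downward rays meeting it. I would first isolate a canonical \emph{bottom slice} $\mathbf{e}\subset\mathbf{r}$: the piece of $\mathbf{r}$ that the exploration removes in its first step, obtained by following the two boundary arcs leaving the base at its convex endpoints together with the quadrangles bounded by the downward rays on the base, into the interior of the disk, until each of these rays either terminates at a concave corner or returns to the boundary. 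The whole statement then splits into: (i) $\mathbf{e}$ is one of the listed minimal submaps; (ii) $\mathbf{e}$ really is a submap of $\mathbf{r}$; (iii) $\mathbf{e}$ is the only minimal submap of $\mathbf{r}$.

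For (i), I would classify the possible shapes of $\mathbf{e}$. Using the immersion into the grid as a bookkeeping device, the slice is enclosed by the base at the bottom, one of the two boundary arcs on each side, and a ``ceiling'' formed by the terminal ends of the downward rays. The rigidity condition --- each ray is either closed (concave corner to straight vertex) or open (straight vertex to straight vertex, starting on an open side) --- leaves very few options. I expect exactly: both side arcs turn convex and the ceiling consists of at most two horizontal sides, which are then forced to be open top sides, giving $\mathbf{G}^{(0)}_{\ell,p-\ell-1}$; or one side arc rises, crosses an open top side and descends to a concave corner at the level of the base, creating an open bottom side of size $1$ on that side, which gives $\mathbf{R}^{(k)}$ or $\mathbf{L}^{(k)}$ according to which side it is on. The degenerate cases ($\ell=0$, $\ell=p-1$, or a collapsed bottom side) are precisely those in which one of these open sides has size~$1$ and hence reduces to a single convex corner, and they must be folded into the same analysis (this is also where the extra ``$+1$'' in $k$ comes from). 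For $p<0$ the exploration runs downward from the open base, open top and open bottom sides trade places, and one obtains $\overline{\mathbf{G}}^{(k)}$, $\overline{\mathbf{R}}^{(0)}$, $\overline{\mathbf{L}}^{(0)}$; here one must additionally check consistency with the modified gluing rule for open bottom sides of size~$1$ from Figure~\ref{fig:Negative_gluing} --- this is the only genuinely new input relative to \cite{Budd_rectilinear_2025}.

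For (ii), I would observe that cutting $\mathbf{r}$ along the non-base open sides of $\mathbf{e}$ leaves, attached to each such side, a rigid quadrangulation whose base-length matches the size (for an open top side, Figure~\ref{fig:Positive_gluing}) or minus the length (for a size-$1$ open bottom side, Figure~\ref{fig:Negative_gluing}) of that side, and that re-gluing recovers $\mathbf{r}$; this is just the statement that cutting and gluing are mutually inverse, immediate from the definitions. For (iii), note that a minimal submap is one with no proper submap, i.e.\ one that equals its own bottom slice. If $\mathbf{e}'\subset\mathbf{r}$ is any minimal submap, then gluing the pieces of $\mathbf{r}\setminus\mathbf{e}'$ back onto the non-base open sides of $\mathbf{e}'$ only builds structure away from the base, so the bottom slice of $\mathbf{e}'$ coincides with the bottom slice of $\mathbf{r}$, namely $\mathbf{e}$; thus $\mathbf{e}\subset\mathbf{e}'$, and since $\mathbf{e}'$ has no proper submap we get $\mathbf{e}'=\mathbf{e}$.

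The main obstacle is step (i): showing rigidity forces the ceiling and the two side arcs of the bottom slice into exactly these three shapes and no others --- for instance ruling out a concave corner on the ceiling that is not the endpoint of the relevant ray, two separate open bottom sides on one side of the base, or an open bottom side of size greater than~$1$. Once the slice is pinned down, matching it to the families $\mathbf{G},\mathbf{R},\mathbf{L}$ (and their barred versions), getting the index pairs $(k,\ell)$ and their ranges right, and absorbing the degenerate boundary cases, is routine bookkeeping, but it is the part demanding the most care.
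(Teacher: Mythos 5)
Your plan restructures the lemma as: (i) classify a canonical ``bottom slice'', (ii) show it is a submap, (iii) deduce uniqueness by a minimality argument. The difficulty is that step (i) is exactly where all the mathematical content of the lemma lives, and you leave it at ``I expect exactly: \dots'' while flagging it yourself as the main obstacle. The paper's proof \emph{is} that classification: a five-way, mutually exclusive case analysis on whether the two boundary vertices $v_{\mathrm{L}},v_{\mathrm{R}}$ adjacent to the base are convex, straight or concave (done separately for $p>0$ and $p<0$), where each case directly identifies the contained submap and its parameters $(\ell,k)$ by locating the horizontal rays emanating from the relevant concave corners, and the mutual exclusivity plus the fact that $(\ell,k)$ are read off from $\mathbf{r}$ itself is what delivers ``exactly one''. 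Since your proposal defers precisely this analysis, it does not yet prove the statement; what you add on top (the slice formalism and the minimality argument) is packaging around the missing core.

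There is also a concrete technical point your framing glosses over. You treat the minimal submap as a literal sub-piece of $\mathbf{r}$ (``the piece of $\mathbf{r}$ that the exploration removes in its first step''), but under the gluing rules of this section the submap relation is not naive containment: in the case $\mathbf{R}^{(k)}_{p-\ell+k-1,\ell}$ with $\ell<0$ (and likewise for $\overline{\mathbf{R}},\overline{\mathbf{L}},\overline{\mathbf{G}}$), the horizontal ray issued from the concave corner crosses, besides the $-\ell-1$ downward rays, an arbitrary number of \emph{upward} rays of $\mathbf{r}$ that are not present in the minimal submap and only ``reappear after the gluing'', because the negative gluing rule extends the upward rays of the glued disk through $\mathbf{e}$ and merges quadrangles. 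Hence the region of $\mathbf{r}$ swept by your slice is not the minimal submap, and both your step (ii) (``cutting and gluing are mutually inverse, immediate from the definitions'') and step (iii) (``gluing only builds structure away from the base'', plus the unverified assertion that each listed submap has no proper submap) silently depend on handling this discrepancy; the paper avoids it by never cutting geometrically, instead exhibiting the submap through the case analysis and noting explicitly which data (upward rays) are discarded and restored by the gluing.
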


\begin{proof}
    The proof is a straightforward adaptation from the proof of \cite[Lemma~11]{Budd_rectilinear_2025}.
\\\\
    Let's start with the case $p>0$. 
    We consider a base-$p$ rigid quadrangulation $\mathbf{r}$. 
    Let $v_{\mathrm{L}}$ and $v_{\mathrm{R}}$ be the first boundary vertices adjacent to the base on the left and on the right.
    One of the following mutually exclusive cases must apply:
    \begin{itemize}[nosep]
        \item $v_{\mathrm{R}}$ and $v_{\mathrm{L}}$ are convex corners, in which case we have $\mathbf{r} = \mathbf{G}^{(0)}_{0,0}$, and we are done.
        \item Exactly one of $\{v_{\mathrm{L}},v_{\mathrm{R}}\}$ is a convex corner, which means that the other is straight. 
        The convex corner must be followed by a concave corner, giving $\mathbf{G}^{(0)}_{p-1,0} \subset \mathbf{r}$ or $\mathbf{G}^{(0)}_{0,p-1} \subset \mathbf{r}$.
        \item $v_{\mathrm{R}}$ and $v_{\mathrm{L}}$ are straight vertices. 
        In this case, they are both endpoints of horizontal rays, which start in adjacent concave corners, giving $\mathbf{G}^{(0)}_{\ell,p-\ell-1}\subset \mathbf{r}$ for some $\ell=1,\ldots,p-2$.
        \item $v_{\mathrm{R}}$ is a concave corner, which means that $v_{\mathrm{L}}$ is straight.
        If we continue following the boundary from $v_{\mathrm{R}}$ to the right, the next corner occurs after $k\geq0$ straight vertices. 
        It can be
        \begin{itemize}
            \item a convex corner, in which case we have $\mathbf{R}^{(k+1)}_{p+k,0} \subset \mathbf{r}$;
            \item a concave corner. 
            The horizontal ray that starts in this concave corner will cross $(-\ell-1)\geq0$ downward rays and an arbitrary number of upward rays in arbitrary order before ending at a straight boundary vertex.
            This gives $\mathbf{R}^{(k)}_{p-\ell+k-1,\ell} \subset \mathbf{r}$ (the upward rays will reappear after the gluing).
        \end{itemize}
        \item $v_{\mathrm{L}}$ is a concave corner, which means that $v_{\mathrm{R}}$ is straight.
        We can mirror the steps from the case above, giving $\mathbf{L}^{(k)}_{p-\ell+k-1,\ell} \subset \mathbf{r}$ for some $k\geq0$ and $\ell\leq0$, except $k=\ell=0$. 
    \end{itemize}

    \noindent We can do a similar analysis when $p<0$.
    Note that $v_{\mathrm{L}}$ here is on the left as seen from the root edge, so in the drawings it will be on the right.
    One of the following mutually exclusive cases must apply:
    \begin{itemize}[nosep]
        \item $v_{\mathrm{R}}$ and $v_{\mathrm{L}}$ are convex corners, in which case we have $\mathbf{r} = \overline{\mathbf{G}}^{(-p+1)}_{0,0}$, and we are done.
        \item Exactly one of $\{v_{\mathrm{L}},v_{\mathrm{R}}\}$ is a convex corner, which means that the other is straight. 
        After $k-1\geq0$ straight boundary vertices, the convex corner will be followed by a concave corner, giving $\overline{\mathbf{G}}^{(k)}_{p+k+1,0} \subset \mathbf{r}$ or $\overline{\mathbf{G}}^{(k)}_{0,p+k+1} \subset \mathbf{r}$.
        \item $v_{\mathrm{R}}$ and $v_{\mathrm{L}}$ are straight vertices. 
        In this case, they are both endpoints of horizontal rays, which start in concave corners that have $k\geq0$ straight boundary vertices between them, giving $\overline{\mathbf{G}}^{(k)}_{\ell,p-\ell+k-1}\subset \mathbf{r}$ for some $\ell=p+k,\ldots,-1$.
        \item $v_{\mathrm{R}}$ is a concave corner, which means that $v_{\mathrm{L}}$ is straight.
        Then the horizontal ray starting at $v_{\mathrm{R}}$ will end at a straight $v_{\mathrm{L}}$. 
        If we continue following the boundary from $v_{\mathrm{R}}$, the next corner can be
        \begin{itemize}
            \item a convex corner, in which case we have $\overline{\mathbf{R}}^{(0)}_{p-1,0} \subset \mathbf{r}$;
            \item a concave corner. 
            The horizontal ray that starts in this concave corner will cross $\ell-1\geq0$ downward rays and an arbitrary number of upward rays in arbitrary order before ending at a straight boundary vertex. 
            This gives $\overline{\mathbf{R}}^{(0)}_{p-\ell-1,\ell} \subset \mathbf{r}$ (the upward rays will reappear after the gluing).
        \end{itemize}
        \item $v_{\mathrm{L}}$ is a concave corner, which means that $v_{\mathrm{R}}$ is straight. We can mirror the steps from the case above, giving $\overline{\mathbf{L}}^{(0)}_{\ell,p-\ell-1} \subset \mathbf{r}$ for some $\ell\leq0$. 
    \end{itemize}
\end{proof}

\noindent For each of these minimal submaps $\mathbf{Z}^{(k)}_{a,b}$, ($\mathbf{Z}\in\{\mathbf{G},\mathbf{R},\mathbf{L}\}$) the base-length is given by $p=a+b-k+1$, and we say that the submap has signature $(p,a,b,k)$.

\begin{remark}
    Note that $a$ and $b$ are the base-lengths of the rigid quadrangulations that are to be glued to the two open sides. 
    Also note that the type of minimal submap (the `value' of $\mathbf{Z}$) is uniquely defined by its signature (see Table~\ref{tab:signatures}).
    The set of allowed signatures is given by
    \begin{align}\label{eq:signatures}
        \left\{(p,a,b,k)\in\Z_{\neq0}\times\Z^2\times\Z_{\geq0}: p=a+b-k+1 \textrm{ and } \ind_{p<0}\geq\ind_{a\leq0}+\ind_{b\leq0} \implies k=0\right\}.
    \end{align}
    Note that $\ind_{p<0}\geq\ind_{a\leq0}+\ind_{b\leq0}$ is a complicated expression, which just means that the submap is always u-shaped or in other words, that the horizontal side in the middle will always be a top side.
\end{remark}

\begin{table}[h]
\caption{Table of signatures $(p,a,b,k)$ of minimal submaps, where $p=a+b-k+1$. 
Disallowed signatures are indicated by I when $p=a+b-k+1$ cannot be satisfied and by II when $\ind_{p<0}\geq\ind_{a\leq0}+\ind_{b\leq0}$ for $k>0$.
\\
\textsuperscript{*}Note that for $p<0,a=b=0$, we must have $k\geq2$.}
\label{tab:signatures}
\centering
\begin{tabular}{rr||cc|cc|cc}
    \multicolumn{2}{c||}{}&\multicolumn{2}{c|}{$a<0$}&\multicolumn{2}{c|}{$a=0$}&\multicolumn{2}{c}{$a>0$}\\
    \multicolumn{2}{c||}{}&$p<0$&$p>0$&$p<0$&$p>0$&$p<0$&$p>0$\\
    \hline\hline
    \multirow{2}*{$b<0$}&$k=0$ &$\overline{\mathbf{G}}^{(0)}_{a,b}$ &I&$\overline{\mathbf{L}}^{(0)}_{0,b}$      
    &I & $\overline{\mathbf{L}}^{(0)}_{a,b}$ &$\mathbf{R}^{(0)}_{a,b}$ \\   
    &$k>0$      &$\overline{\mathbf{G}}^{(k)}_{a,b}$ &I& $\overline{\mathbf{G}}^{(k)}_{0,b}$&I&II&$\mathbf{R}^{(k)}_{a,b}$ \\
    \hline
    \multirow{2}*{$b=0$}&$k=0$ &$\overline{\mathbf{R}}^{(0)}_{a,0}$ &I &I     
    &$\mathbf{G}^{(0)}_{0,0}$&I &$\mathbf{G}^{(0)}_{a,0}$ \\  
    &$k>0$      &$\overline{\mathbf{G}}^{(k)}_{a,0}$&I & $\overline{\mathbf{G}}^{(k)}_{0,0}$ \textsuperscript{*} &I& II  &$\mathbf{R}^{(k)}_{a,0}$ \\
    \hline
    \multirow{2}*{$b>0$}&$k=0$ &$\overline{\mathbf{R}}^{(0)}_{a,b}$  
    &$\mathbf{L}^{(0)}_{a,b}$&I& $\mathbf{G}^{(0)}_{0,b}$&I &$\mathbf{G}^{(0)}_{a,b}$ \\ 
    &$k>0$      &II     
    &$\mathbf{L}^{(k)}_{a,b}$&II &$\mathbf{L}^{(k)}_{0,b}$ &II&II \\
\end{tabular}    
\end{table}

The idea of the tree bijection is that in each step of decomposition using the minimal submaps, we get two parts, which are both either a convex corner, where the recursion stops, or a new rigid quadrangulation, which can be decomposed further.
This naturally gives a binary plane tree. 
The claim is that the signatures of the minimal submaps involved in this decomposition is exactly the decoration that the tree needs, to make a bijection.

\FloatBarrier
\section{H-trees}\label{sec:H_trees}
\FloatBarrier
We will now define the decorated tree that we need for the bijection, following the row-by-row decomposition sketched above. 
While the tree itself is just a simple rooted binary plane tree, the decoration will have to follow a more complicated set of rules, matching the complicated set of allowed signatures given in \eqref{eq:signatures}.
\\\\
Consider a rooted binary tree $T$ with root $v_0$ and denote the vertex and edge set of the tree by $V(T)$ and $E(T)$ respectively.
We call vertices of degree $>1$ \emph{internal} and denote the set of internal vertices by $V_I(T)\subset V(T)$.
The root induces an orientation in the tree such that for each internal vertex $v\in V_I(T)$ we can find a parent edge $v^-$ and a left child edge $v^\textrm{L}$ and a right child edge $v^\textrm{R}$, which are all elements of $E(T)$. 
We will also call $v^-$ the parent edge of $v^\textrm{L}$ and $v^\textrm{R}$ and similarly, $v^\textrm{L}$ and $v^\textrm{R}$ are also called the left/right child edges of $v^-$.

\begin{definition}
    We define a \emph{partition tree} of degree $n$ and base-length $p$ to be a tuple $(T,v_0,f_V,f_E)$. 
    Here, $T$ is a rooted binary plane tree with $n$ leaves and root $v_0$. 
    The functions $f_V: V_I(T)\rightarrow \Z_{\geq0}$ and $f_E: E(T)\rightarrow\Z$ are integer labelings on internal vertices and the edges respectively, such that
    \begin{enumerate}[label=(P.\arabic*)]
        \item\label{enumitem:sum_condition} for all $v\in V_I(T)$ 
        \begin{align}\label{eq:fV_fE_relation}
            f_V(v)=f_E(v^\textrm{L})+f_E(v^\textrm{R}) - f_E(v^-)+1.
        \end{align}
        Heuristically, the labels on the vertices indicate the `excess' of edge labels, offset by one.
        \item\label{enumitem:root_label} $f_E(e_0)=p$ where $e_0$ is the edge incident to the root $v_0$.
    \end{enumerate}
\end{definition}

We call an edge $e$ positive when $f_E(e)$ is positive.
We call an internal vertex $v$ a \emph{bottom vertex} when $\ind_{f_E(v^-)\leq0}<\ind_{f_E(v^\textrm{L})\leq0}+\ind_{f_E(v^\textrm{R})\leq0}$.
\footnote{Heuristically, we can say that at least one non-positive edge is newly created at a bottom vertex.}
Otherwise, the internal vertex is called a \emph{top vertex}.
\footnote{A top vertex will correspond in the bijection to a top side in the rigid quadrangulation, thus the naming. 
A bottom vertex will correspond to a bottom side, except when both the vertex and at least one of the child edges have label $0$, in which case it will unfortunately correspond to a top side.}

\begin{figure}
    \centering
    \includegraphics[width=\linewidth]{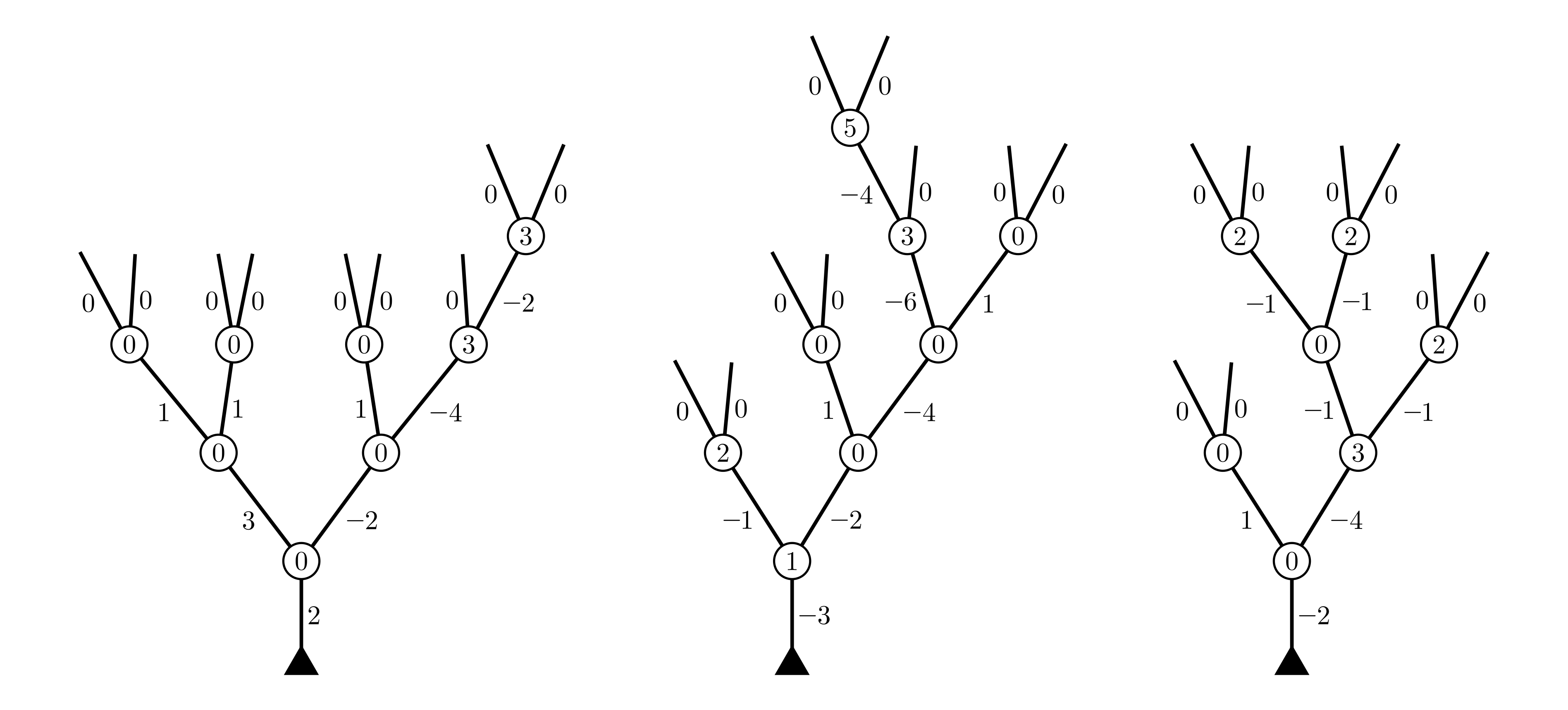}
    \caption{Some examples of H-trees, with base-lengths $2$, $-3$ and $-2$ respectively.
    \label{fig:H_trees}}
\end{figure}

\begin{definition}
    A \emph{H-tree} of degree $n>0$ and base-length $p\in\Z$ is a partition tree $(T,v_0,f_V,f_E)$, such that 
    \begin{enumerate}[label=(H.\arabic*),start=3]
        \item\label{enumitem:H_fE_leaf} $f_E(e)=0$ if and only if $e$ is incident to a leaf of the tree;
        \item\label{enumitem:H_fV} If $v$ a top vertex ($\ind_{f_E(v^-)\leq0}\geq\ind_{f_E(v^\textrm{L})\leq0}+\ind_{f_E(v^\textrm{R})\leq0}$), then $f_V(v)=0$.
        Heuristically, we can only have excess when a new non-positive child is created.
    \end{enumerate}
\end{definition}
The set of H-trees of degree $n$ and base-length $p$ is denoted by $\mathcal{H}_{n,p}$.

It should not be hard to see that the requirements of the labels match the rules for allowed signatures of subdisks \eqref{eq:signatures}.

\begin{theorem}\label{thm:bijection_rigid_quad_H_tree}
    There is a bijection between base-$p$ rigid quadrangulations with $2n$ corners and H-trees of degree $n$ and base-length $p$.
\end{theorem}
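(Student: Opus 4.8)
The plan is to set up the bijection recursively, using Lemma~\ref{lem:minimal_signed_submaps} to peel off the minimal submap at the base of a rigid quadrangulation and match it with the decoration at the root of the corresponding H-tree. Concretely, I would define a map $\Phi$ from base-$p$ rigid quadrangulations to H-trees by induction on the degree $n$. For $n=1$ (equivalently, the single convex corner / the disk with $F^{(0)}(t)=t$, or the minimal nondegenerate cases) I would check the base cases directly: the H-tree is a single edge incident to two leaves carrying $f_E=0$, forced by \ref{enumitem:H_fE_leaf}, and condition \ref{enumitem:sum_condition} at the unique internal vertex pins down the rest. For the inductive step, given $\mathbf{r}\in\mathcal{R}^{(p)}_n$, Lemma~\ref{lem:minimal_signed_submaps} says $\mathbf{r}$ contains a unique minimal submap $\mathbf{Z}^{(k)}_{a,b}$ with signature $(p,a,b,k)$; the complement of this submap is a tuple of (one or two) rigid quadrangulations $\mathbf{u}_{\mathrm L},\mathbf{u}_{\mathrm R}$ of base-lengths $a$ and $b$ respectively (a convex corner when the corresponding side is degenerate, i.e. when $a\le 0$ or $b\le 0$ forces the boundary case $\ell=0$). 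I would then set $\Phi(\mathbf{r})$ to be the tree with root $v_0$, an internal vertex $v$ below it, left subtree $\Phi(\mathbf{u}_{\mathrm L})$ and right subtree $\Phi(\mathbf{u}_{\mathrm R})$, with $f_E(v^{\mathrm L})=a$, $f_E(v^{\mathrm R})=b$, $f_E(v_0^-)=p$, and $f_V(v)=k$. Relation \ref{enumitem:sum_condition} holds because $p=a+b-k+1$ is exactly the base-length identity recorded after the lemma, and \ref{enumitem:H_fV} holds because $\mathbf{Z}$ is u-shaped precisely when $k=0$, which is the condition $\ind_{f_E(v^-)\le0}\ge\ind_{f_E(v^{\mathrm L})\le0}+\ind_{f_E(v^{\mathrm R})\le0}$ defining a top vertex — this is the ``not hard to see'' remark made just before the theorem, which I would now spell out by comparing \eqref{eq:signatures} with the H-tree axioms.

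Next I would construct the inverse $\Psi$, again by induction, reading off the root internal vertex $v$ of an H-tree, its labels $(f_E(v_0^-), f_E(v^{\mathrm L}), f_E(v^{\mathrm R}), f_V(v)) = (p,a,b,k)$, checking that $(p,a,b,k)$ is an allowed signature (this is where axioms \ref{enumitem:sum_condition}, \ref{enumitem:H_fE_leaf}, \ref{enumitem:H_fV} get used to verify the constraints in \eqref{eq:signatures}, including the side conditions on degeneracies and the starred case $p<0,a=b=0\Rightarrow k\ge2$), recovering the minimal submap $\mathbf{Z}^{(k)}_{a,b}$ via Table~\ref{tab:signatures}, recursively building $\mathbf{u}_{\mathrm L}=\Psi(\text{left subtree})$ and $\mathbf{u}_{\mathrm R}=\Psi(\text{right subtree})$, and gluing them to the two open sides of $\mathbf{Z}^{(k)}_{a,b}$ using the two gluing rules from section~\ref{sec:gluing} (the positive/top gluing of Figure~\ref{fig:Positive_gluing} for open top sides, the negative/bottom gluing of Figure~\ref{fig:Negative_gluing} for open bottom sides, which have size $1$ by the structure of the minimal submaps). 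I would then verify $\Phi$ and $\Psi$ are mutually inverse: this reduces to the statement that decomposing a glued map recovers the pieces, i.e. that for fixed $\mathbf{e}$ the gluing data $(\mathbf{u}_s)_s$ is uniquely determined — exactly the ``$\mathbf{u}_s$ are uniquely determined'' assertion proved in section~\ref{sec:gluing} — combined with the uniqueness of the minimal submap from Lemma~\ref{lem:minimal_signed_submaps}. The degree bookkeeping ($n$ convex corners not incident to the base $\leftrightarrow$ $n$ leaves) follows because each gluing step adds exactly the leaves of the glued subtrees and each convex corner that is not base-incident corresponds to exactly one leaf at the bottom of the recursion.

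The main obstacle, I expect, is not the recursive skeleton but the careful matching of the \emph{degenerate cases}: when $a\le 0$ or $b\le 0$ hits its boundary value, the ``open side'' collapses to one or two convex corners, and one must check that (i) the sign/size conventions for negative base-lengths (size $=-$length for open bases) line up with the leaf label $f_E=0$ and with the $+1$ offsets in $p=a+b-k+1$, (ii) the bottom-vs-top vertex subtlety flagged in the footnote (a bottom vertex with both the vertex and a child edge labelled $0$ actually corresponds to a \emph{top} side) is consistent with the u-shaped requirement $\ind_{p<0}\ge\ind_{a\le0}+\ind_{b\le0}\Rightarrow k=0$, and (iii) the gluing rule for open bottom sides is only defined for size $1$, which must be guaranteed by the shape of the minimal submaps produced in the decomposition (indeed the $\overline{\mathbf{G}},\overline{\mathbf{R}},\overline{\mathbf{L}}$ submaps have open bottom sides of size exactly $1$). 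I would handle these by a case analysis mirroring the six-way split in the proof of Lemma~\ref{lem:minimal_signed_submaps} and the entries of Table~\ref{tab:signatures}, checking each cell against the H-tree axioms; the bulk of the work is verifying that the "allowed signatures" set \eqref{eq:signatures} is in bijection with the local decoration data $(f_E(v^-),f_E(v^{\mathrm L}),f_E(v^{\mathrm R}),f_V(v))$ at a single internal vertex of an H-tree, after which the global bijection follows by induction.
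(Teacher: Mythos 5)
Your proposal follows essentially the same route as the paper: recursively peel off the unique minimal submap from Lemma~\ref{lem:minimal_signed_submaps}, record its signature $(p,a,b,k)$ as the edge and vertex labels at an internal vertex of a binary tree, and invert by recursively gluing convex corners into the minimal submaps read off from the labels, using uniqueness of the glued pieces and of the minimal submap; you merely spell out the case-matching against \eqref{eq:signatures} that the paper dismisses as straightforward. (Only minor nit: the base of the recursion is the base-$0$ quadrangulation, i.e.\ a single convex corner corresponding to a leaf, rather than a degree-$1$ tree with an internal vertex as in your $n=1$ description, but this does not affect the argument.)
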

\begin{proof}
    \begin{figure}[p]
        \centering
        \includegraphics[width=.9\linewidth]{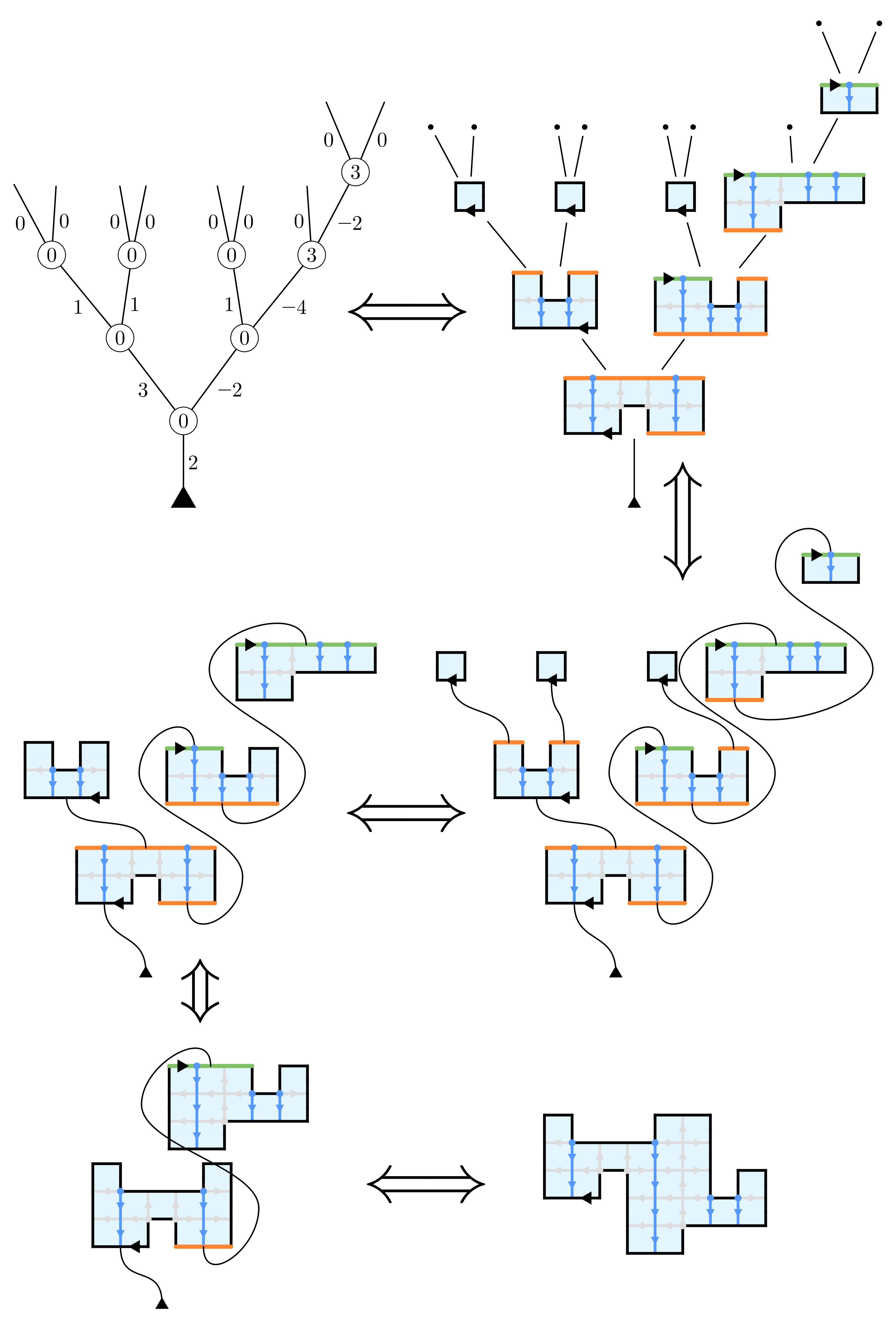}
        \caption{Example of the bijection between H-trees and based rigid quadrangulations.
        \label{fig:H_tree_to_disk}}
    \end{figure}

    As a consequence of Lemma \ref{lem:minimal_signed_submaps} and subsequent remark, a (complete) rigid quadrangulation of base-length $p\neq0$ with $2n$ corners is completely determined by its base-length $p$ and the two (ordered, complete) rigid quadrangulations glued to its unique minimal submap.\footnote{Here, we consider a convex corner as a complete rigid quadrangulation with base-length $0$.}
    These two quadrangulations are again determined by the base-length and two glued quadrangulations, and this can be repeated recursively until the quadrangulations all have base-length $0$, which are unique. 
    What remains are the base-lengths of all intermediate steps and how they are split, which can be represented by a rooted binary tree where the intermediate base-lengths are labels on the edges. 
    Each step in the decomposition then corresponds to an internal vertex, which we can label by $k=a+b-p+1$, which also directly corresponds to the number of downwards rays ending in the horizontal side in the middle (plus one for each bottom vertex in degenerate cases).
    It is straightforward to check that this tree is an H-tree of degree $n$ and base-length $p$.

    Conversely, given an H-tree we can assign to each leaf a base-$0$ rigid quadrangulation (a convex corner) and recursively glue the rigid quadrangulations to the minimal submaps corresponding to the signatures that can be read off from the internal vertices. 
    The properties of the H-tree ensures that these signatures are always allowed, so that a minimal submap exists.
\end{proof}
See Figure~\ref{fig:H_tree_to_disk} for an example.

\section{Q-trees}\label{sec:Q_trees}
Due to the non-trivial constraints on the labels, H-trees are hard to enumerate directly. 
By doing some clever shifts of the labels, we can relate the H-trees to similar labeled trees (Q-trees), but with different constraints, which are easier to count.

Let us first define a pre-Q-tree.
\begin{definition}
    A \emph{pre-Q-tree} of degree $n>0$ and base-length $p\in\Z$ is a partition tree $(T,v_0,f_V,f_E)$, such that 
    \begin{enumerate}[label=(Q.\arabic*),start=3]
        \item\label{enumitem:preQ_fe} If an edge $e$ is incident to a leaf, $f_E(e)\leq 0$ ;
        \item\label{enumitem:preQ_fV} $f_V(v)=0$ for all internal vertices $v$, so no more excess.
    \end{enumerate}
\end{definition}
The set of pre-Q-trees of degree $n$ and base-length $p$ is denoted by $\hat{\mathcal{Q}}_{n,p}$.

The condition \ref{enumitem:preQ_fV} is much easier than \ref{enumitem:H_fV}, which motivates the shift from H-trees to Q-trees.
Of course, there will be a price to pay.
\\\\
\begin{figure}[t]
    \centering
    \includegraphics[width=\linewidth]{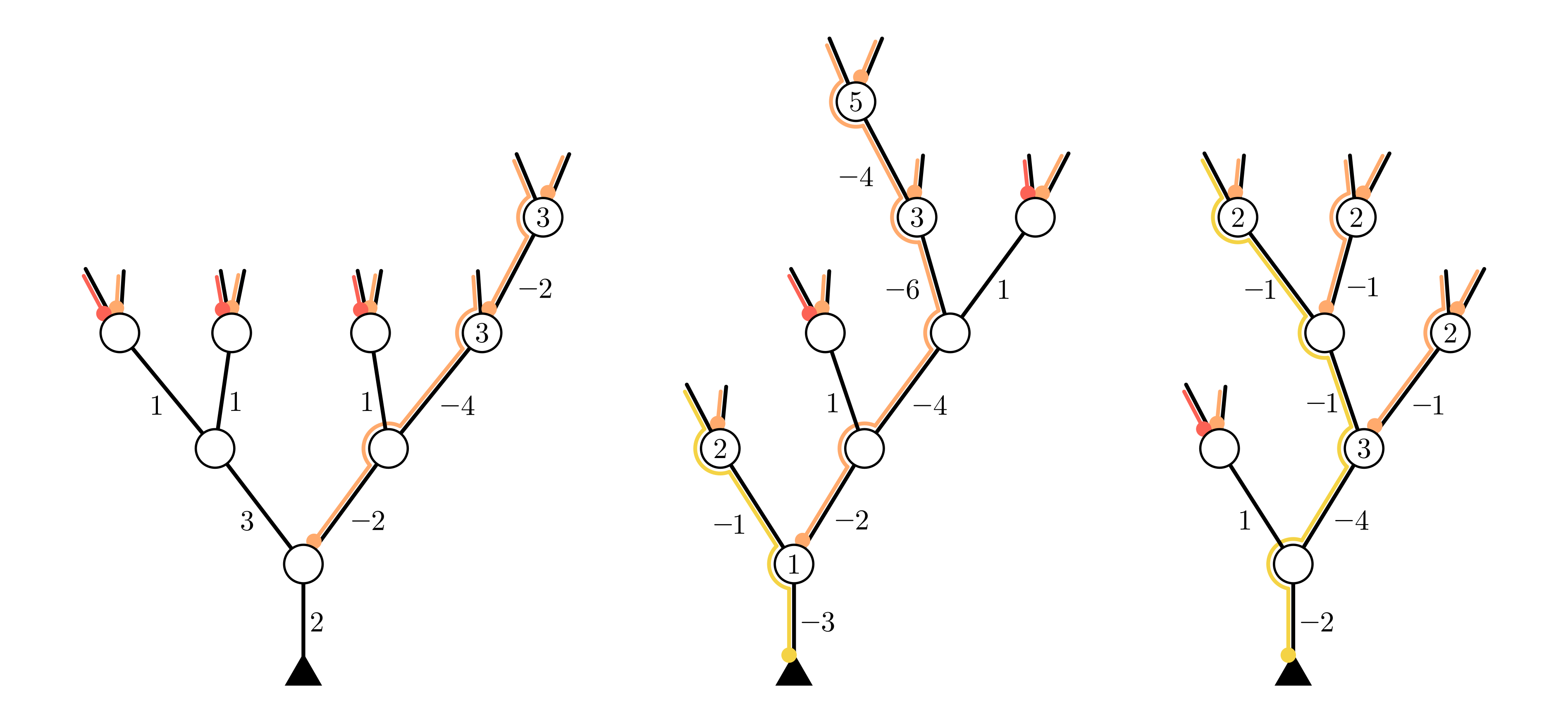}
    \caption{The H-trees from Figure~\ref{fig:H_trees} with the paths drawn on the trees. 
    The orange lines are the regular paths, the yellow lines are the root paths and the red lines are the special paths.
    \\
    For simplicity, we have omitted the (edge or vertex) label when the label is $0$.
    \\
    Note that special paths only occur when an edge with label~$1$ splits into two edges with label~$0$. 
    \label{fig:Paths}}
\end{figure}%
To do the shifts of the labels that will help us, we create paths in both H-trees and pre-Q-trees, consisting of consecutive non-positive edges.
We will have three types of paths:
\begin{enumerate}
    \item For each bottom vertex $v$, we have a regular path $P_\textrm{reg}(v)$. 
    The path starts by taking the rightmost non-positive child edge of $v$ and after that we repeatedly take the leftmost non-positive child edge until we hit a leaf. We will denote the final edge by $l(v)$.
    \item If the base-length is non-positive, we will also have a root path $P_0(v_0)$ starting at the root following the same steps as for the regular paths.
    \item There can be bottom vertices $v$ that have a positive parent edge and two non-positive child edges. 
    Besides the regular path $P_\textrm{reg}(v)$, we also create a special path $P_\textrm{spe}(v)$, which starts by taking the \emph{left} child edge, after which it follows the regular rules. 
\end{enumerate}
Note that the construction of paths does not depend on the exact values of the edge-labels, only on whether they are positive.

\begin{lemma}
    The construction of paths is well-defined for H-trees and pre-Q-trees.
\end{lemma}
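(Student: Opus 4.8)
The plan is to show that each of the three path-constructions terminates at a leaf, and that the ``rules'' (``take the rightmost/leftmost non-positive child edge'', ``take the left child edge'') are always executable, i.e.\ the required non-positive child edge always exists. The key observation is the defining relation \ref{enumitem:sum_condition}, namely $f_V(v)=f_E(v^\textrm{L})+f_E(v^\textrm{R})-f_E(v^-)+1$, together with $f_V(v)\geq 0$. Rearranged, this says $f_E(v^\textrm{L})+f_E(v^\textrm{R}) = f_E(v^-)+f_V(v)-1 \geq f_E(v^-)-1$.

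First I would handle the \emph{regular path} $P_\textrm{reg}(v)$ at a bottom vertex $v$. By definition of a bottom vertex, $\ind_{f_E(v^-)\leq0}<\ind_{f_E(v^\textrm{L})\leq0}+\ind_{f_E(v^\textrm{R})\leq0}$, so at least one child edge of $v$ is non-positive; hence the first step (take the rightmost non-positive child edge) is well-defined. For the inductive step I would prove the invariant: whenever the path currently sits on a non-positive edge $e$ which is an internal edge, say $e=w^-$ for an internal vertex $w$, then at least one of $w^\textrm{L},w^\textrm{R}$ is non-positive. This follows because $f_E(w^\textrm{L})+f_E(w^\textrm{R})\geq f_E(w^-)-1 \geq -1$ is not the obstruction directly — instead I would argue that if \emph{both} children were positive, i.e.\ $f_E(w^\textrm{L}),f_E(w^\textrm{R})\geq 1$, then $f_E(w^-)=f_E(w^\textrm{L})+f_E(w^\textrm{R})-f_V(w)+1$; this need not be positive in general, so the clean argument must instead use the more refined structure: in an H-tree, a \emph{top} vertex has $f_V(v)=0$, and by \ref{enumitem:H_fV} a vertex with a non-positive parent edge and two positive child edges would be a top vertex, forcing $f_E(v^-)=f_E(v^\textrm{L})+f_E(v^\textrm{R})+1\geq 3>0$, contradicting that $e=w^-$ is non-positive. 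For pre-Q-trees the same conclusion holds because \ref{enumitem:preQ_fV} forces $f_V\equiv 0$ everywhere, giving the identical computation. Either way, a non-positive internal edge always has a non-positive child edge, so the path can always continue; since the tree is finite and the path strictly descends, it must terminate, and it can only terminate at a leaf (where, by \ref{enumitem:H_fV} resp.\ \ref{enumitem:preQ_fe}, the incident edge indeed has a non-positive — in fact zero — label, consistent with the path). This also covers the \emph{root path} $P_0(v_0)$ when $p\leq 0$: the initial edge $e_0$ has $f_E(e_0)=p\leq 0$, so it is non-positive and the same descent argument applies.

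Next I would treat the \emph{special path} $P_\textrm{spe}(v)$: it is only created at a bottom vertex $v$ with positive parent edge and \emph{two} non-positive child edges, so the first step (take the \emph{left} child edge) lands on a non-positive edge, and from there the invariant above applies verbatim, so $P_\textrm{spe}(v)$ is well-defined and terminates at a leaf. The one remaining thing to check for well-definedness is that the phrase ``the leftmost/rightmost non-positive child edge'' is unambiguous — but a binary vertex has only two child edges, so ``leftmost among the non-positive ones'' and ``rightmost among the non-positive ones'' each designate a unique edge once we know at least one child is non-positive, which is exactly what the invariant guarantees at every step.

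The main obstacle is the first inductive step: ruling out that the path arrives at a non-positive internal edge both of whose children are positive. This is precisely where the H-tree axiom \ref{enumitem:H_fV} (or the pre-Q-tree axiom \ref{enumitem:preQ_fV}) is essential — the bare partition-tree relation \ref{enumitem:sum_condition} together with $f_V\geq 0$ is \emph{not} enough, since it only gives $f_E(v^\textrm{L})+f_E(v^\textrm{R})\geq f_E(v^-)-1$, which does not by itself contradict $f_E(v^-)\leq 0$. So the proof must carefully invoke the classification of top vs.\ bottom vertices and the vanishing of $f_V$ on top vertices. Once that single point is isolated and dispatched, everything else is the finite-descent bookkeeping sketched above.
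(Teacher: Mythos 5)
Your proof is correct and takes essentially the same approach as the paper: the first step exists because a bottom vertex has a non-positive child edge by definition, and the continuation exists because a non-positive internal edge with two positive children would make its lower endpoint a top vertex, whence $f_V=0$ (by \ref{enumitem:H_fV} resp.\ \ref{enumitem:preQ_fV}) and \ref{enumitem:sum_condition} would force the parent edge to be positive, a contradiction. The only blemishes are cosmetic attribution slips (being a top vertex follows from the definition of bottom vertex, not from \ref{enumitem:H_fV}, and the zero label on leaf edges of H-trees is \ref{enumitem:H_fE_leaf}), which do not affect the argument.
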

\begin{proof}
    It follows from the definition of a bottom vertex that it has at least one non-positive child edge, so there exists a rightmost (leftmost) non-positive child edge.
    Furthermore, for any internal vertex $v$ in both H-trees and pre-Q-trees, we either have $f_V(v)=0$ or $v$ is a bottom vertex.
    This means that when we have a non-positive parent edge, it is guaranteed that at least one of the child edges is non-positive, so a leftmost non-positive child edge exists.  
\end{proof}
\begin{lemma}
    The paths in H-trees and pre-Q-trees have the following properties:
    \begin{enumerate}[nosep]
        \item Different paths are disjoint.
        \item All non-positive edges are in a path.
        \item All leaves are endpoints of paths.
        \item Special paths only start in vertices with label $0$, where the parent edge has label $1$ and both child edges have label $0$.   
    \end{enumerate}
\end{lemma}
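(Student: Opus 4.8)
\emph{Proof plan.} The plan is to recognise the three families of paths as the maximal chains of a single ``successor'' partial map defined on the set of non-positive edges, and then to read off all four assertions from the resulting chain decomposition.

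First I would record the elementary observation that makes the statement sensible: every edge occurring on any path is non-positive. Indeed, the first edge of a regular path is a non-positive child edge by definition, the first edge of a special path is a left child edge, which is non-positive by hypothesis, the first edge of the root path is $e_0$ with $f_E(e_0)=p\le 0$, and every later edge is a leftmost non-positive child edge. Next I would define, for a non-positive edge $e$ whose lower endpoint $w'$ is internal, its successor $\sigma(e)$ to be the leftmost non-positive child edge of $w'$ (which exists because the path construction is well-defined, as just shown); if $w'$ is a leaf, $\sigma(e)$ is undefined.

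The heart of the argument is then a short case analysis of the upper endpoint $w$ of a non-positive edge $e\neq e_0$, with $e\in\{w^\textrm{L},w^\textrm{R}\}$. If $w$ is a top vertex, then $f_V(w)=0$ — by \ref{enumitem:H_fV} in an H-tree and by \ref{enumitem:preQ_fV} in a pre-Q-tree — so by \eqref{eq:fV_fE_relation} we get $f_E(w^\textrm{L})+f_E(w^\textrm{R})=f_E(w^-)-1$; the inequality defining a top vertex then forces $f_E(w^-)\le 0$ (else both children would be positive, contradicting that $e$ is non-positive) and, with it, that $e$ is the unique non-positive child of $w$, so $e=\sigma(w^-)$. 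If $w$ is a bottom vertex, then either $e$ is the rightmost non-positive child of $w$, so $e$ is the initial edge of $P_\textrm{reg}(w)$, or $e=w^\textrm{L}$ with $w^\textrm{R}$ also non-positive, in which case $e$ is the initial edge of $P_\textrm{spe}(w)$ when $f_E(w^-)>0$ and $e=\sigma(w^-)$ when $f_E(w^-)\le 0$. Finally $e_0$, when $p\le 0$, is the initial edge of $P_0(v_0)$. In every case $e$ is either $\sigma$ of a uniquely determined non-positive edge or the initial edge of a uniquely determined path, and the two alternatives are mutually exclusive. Since $\sigma$ strictly increases depth in the tree it has no cycles, so the non-positive edges partition into finite chains, each obtained by iterating $\sigma$ from an initial edge until the lower endpoint is a leaf, and these chains are exactly the paths $P_0(v_0)$, $P_\textrm{reg}(v)$, $P_\textrm{spe}(v)$.

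From this, disjointness (claim~1) and the fact that every non-positive edge lies on a path (claim~2) are immediate. For claim~3, a leaf is incident to exactly one edge, which is non-positive — equal to $0$ in an H-tree by \ref{enumitem:H_fE_leaf}, at most $0$ in a pre-Q-tree by \ref{enumitem:preQ_fe} — so it lies on a chain, and being a leaf edge it is the terminal edge of that chain, i.e.\ the leaf is an endpoint of a path. For claim~4, if $P_\textrm{spe}(v)$ exists then $v$ is a bottom vertex with $f_E(v^-)>0$ and $f_E(v^\textrm{L}),f_E(v^\textrm{R})\le 0$; since $f_V(v)\ge 0$ (indeed $f_V(v)=0$ in a pre-Q-tree), \eqref{eq:fV_fE_relation} gives $f_E(v^\textrm{L})+f_E(v^\textrm{R})\ge f_E(v^-)-1\ge 0$, which forces $f_E(v^\textrm{L})=f_E(v^\textrm{R})=0$, whence $f_E(v^-)=1$ and $f_V(v)=0$. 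I expect the main obstacle to be the case analysis above: one must check, with some care for $e_0$ and for bottom vertices already reached by a non-positive parent edge (where only one of the two children prolongs the incoming path while the other opens a new regular path), that the constraints \ref{enumitem:sum_condition} and \ref{enumitem:H_fV}/\ref{enumitem:preQ_fV} pin down a unique role for every non-positive edge.
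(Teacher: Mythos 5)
Your proof is correct and follows essentially the same route as the paper: the same arithmetic with \eqref{eq:fV_fE_relation} and the top/bottom-vertex dichotomy for claims 2--4, and the same observation that at a bottom vertex on an existing path the leftmost and rightmost non-positive children differ for claim 1. Your successor-map/chain-decomposition framing is just a more systematic packaging of the paper's edge-by-edge case analysis, not a different argument.
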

\begin{proof}
    We prove the statements one by one.
    \begin{enumerate}
        \item Paths move away from the root. 
        Therefore, paths might only overlap when one is created `on top of' an existing path. 
        However, when we have an existing path and thus a non-positive parent edge, by the definition of a bottom vertex, both child edges are non-positive, so the leftmost non-positive child edge does not equal the rightmost.
        \item For a non-positive edge, it is either part of a path that is created in its parent vertex, or it is the left of two non-positive child edges with a non-negative parent edge, where it will be in the same path as this parent edge, if this parent edge is part of a path.
        In the latter case, we can recursively apply this argument to the non-negative parent edge. 
        This recursion has to stop, since we will ultimately reach the root edge, which is part of the root path when it is non-positive.
        We can thus conclude that all non-positive edges are in a path.
        \item This immediately follows from the previous property, combined with the fact that edges incident to leaves are by definition non-positive.
        \item In this case, we have $f_E(v^-)>0$, $f_E(v^\textrm{L})\leq0$, $f_E(v^\textrm{R})\leq0$ and $f_V(v)\geq0$. 
        The only solution to $f_V(v)=f_E(v^\textrm{L})+f_E(v^\textrm{R}) - f_E(v^-)+1$ is $f_E(v^-)=1$ and $f_E(v^\textrm{L})=f_E(v^\textrm{R})=f_V(v)=0$.
    \end{enumerate}
\end{proof}

A path is called \emph{strong} when the edge labels along the path attain a unique maximum at the last edge of the path (the edge incident to a leaf). 
It follows directly from the requirement \ref{enumitem:H_fE_leaf} that all paths in an H-tree are strong.

\begin{definition}
    A \emph{Q-tree} is a pre-Q-tree such that
    \begin{enumerate}[label=(Q.\arabic*),start=5]
        \item\label{enumitem:Q_dom} all paths are strong.
    \end{enumerate}
\end{definition}
The set of Q-trees of degree $n$ and base-length $p$ is denoted by $\mathcal{Q}_{n,p}\subseteq\hat{\mathcal{Q}}_{n,p}$.
\\\\
Note that in both H-trees and Q-trees the special paths will consist of only a single edge, since an edge with label $0$ must be incident to a leaf.

A Q-tree is called \emph{well-based}, when either it has positive base-length or when the root path ends on an edge with label $0$.
The set of well-based Q-trees of degree $n$ and base-length $p$ is denoted by $\mathcal{Q^*}_{n,p}\subseteq\mathcal{Q}_{n,p}$.

\begin{figure}
    \centering
    \includegraphics[width=\linewidth]{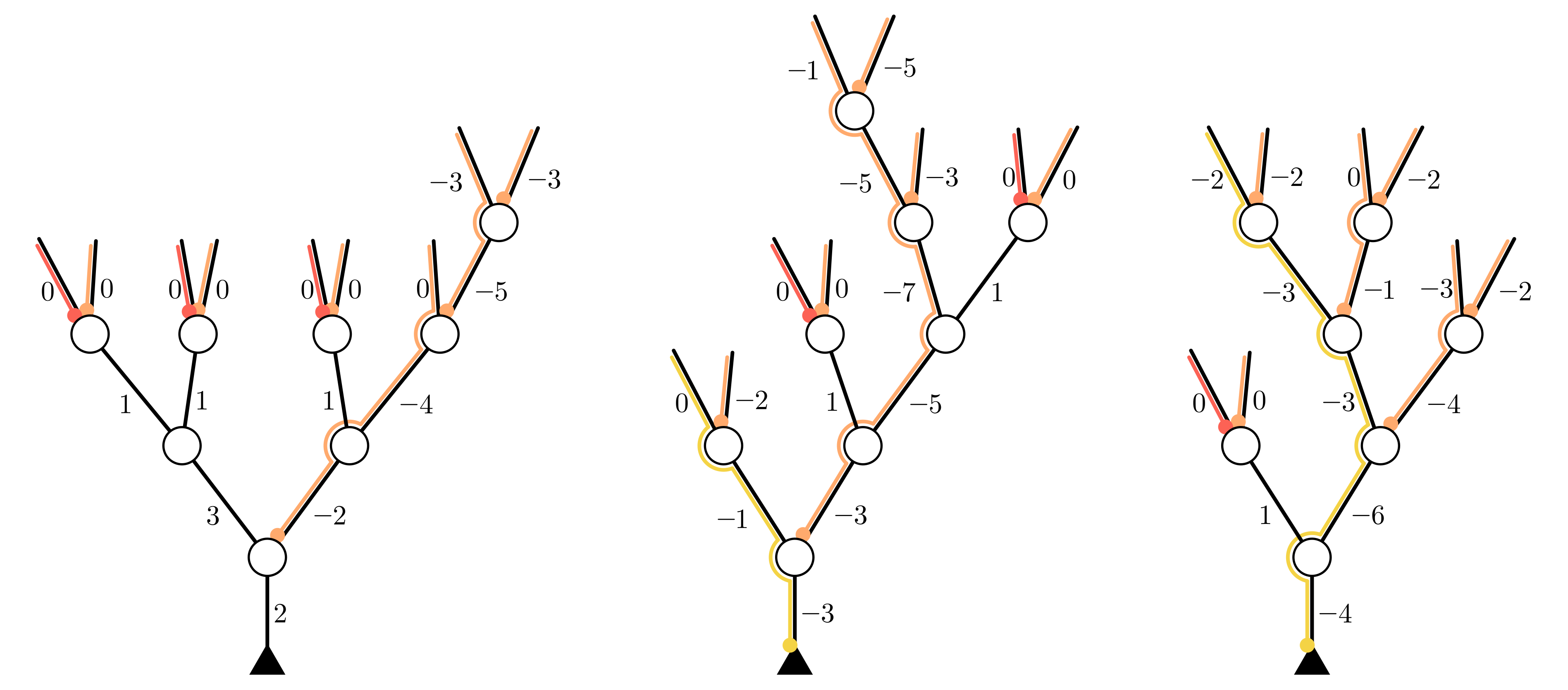}
    \caption{Some examples of Q-trees.
    The vertex labels have been omitted, since they vanish by definition. 
    Note that all paths are strong, meaning that the edge label at the leaf has the unique highest value within the path.
    \\
    The first two examples are well-based Q-trees. 
    In fact, they are the result of applying $\psi$ to the first two examples of H-trees in Figures~\ref{fig:H_trees} and \ref{fig:Paths}. 
    The last example is not well-based: the yellow root path ends on an edge with a non-zero label ($-2$). 
    It is the result of applying $\hat{\psi}_{-4}$ to the third example of H-trees in Figures~\ref{fig:H_trees} and \ref{fig:Paths}.
    \label{fig:Q_trees}}
\end{figure}

\subsection*{Relations between H-trees and Q-trees}

The H- and Q-trees are related. In fact, there is a bijection between H-trees and well-based Q-trees. 

\begin{proposition}\label{prop:H_Q*_bij}
    For all $n,p$ there exists a bijection $\psi$ between $\mathcal{H}_{n,p}$ and $\mathcal{Q}^*_{n,p}$.
\end{proposition}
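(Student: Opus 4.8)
The plan is to construct $\psi$ explicitly by a local ``label shift along paths'' and to check it is invertible. Recall that an $H$-tree and a pre-$Q$-tree share the same underlying binary plane tree and the same edge-sign pattern (positive vs.\ non-positive), hence the same collection of regular, root, and special paths. The difference is in the vertex labels (in $H$-trees we may have $f_V(v)>0$ at bottom vertices, in pre-$Q$-trees always $f_V(v)=0$) and in the leaf-edge labels (in $H$-trees leaf edges have label $0$, in pre-$Q$-trees only label $\le 0$). So I would define $\psi$ so that it leaves the tree shape and sign pattern untouched, and redistributes the ``excess'' $f_V(v)$ recorded at each bottom vertex down the regular (or special) path that emanates from $v$, pushing it into the negative part of the leaf-edge label $f_E(l(v))$. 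Concretely, along each path the edge labels will be recomputed so that the total ``excess'' that was stored at the path's initiating bottom vertex is absorbed at the terminal leaf edge, making that leaf edge strictly the maximum along the path (this is exactly condition \ref{enumitem:Q_dom}, strongness) and simultaneously zeroing out all the $f_V$'s (condition \ref{enumitem:preQ_fV}). The root path is handled the same way when $p\le 0$, and the resulting Q-tree is well-based precisely because in an $H$-tree the leaf edge has label $0$, which under the shift becomes the unique maximum $0$ on the root path — matching the definition of $\mathcal{Q}^*$.

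First I would set up the bookkeeping: for a fixed path $P=(e_1,\dots,e_m)$ (ordered from the initiating internal vertex toward the leaf, with $e_m=l(v)$ incident to a leaf), the relation \ref{enumitem:sum_condition} at each internal vertex along $P$ lets me express all the $f_E(e_i)$ in terms of the ``side branches'' hanging off $P$, the label of the parent edge of $e_1$, and the accumulated $f_V$ values. I would show that there is a unique way to re-choose the labels on $e_1,\dots,e_m$ (keeping every edge \emph{off} the path, and every sign, fixed) so that all vertices on $P$ acquire $f_V=0$ and the new leaf label equals the old leaf label minus the total excess $\sum f_V$ accumulated along $P$ in the $H$-tree. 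Since in the $H$-tree all non-leaf edge-labels on a path are $\le 0$ and the leaf label is $0$ with the path strong (as noted right before the Q-tree definition, strongness of $H$-tree paths follows from \ref{enumitem:H_fE_leaf}), subtracting a nonnegative excess keeps everything $\le 0$ and keeps the maximum uniquely at the leaf; so the image is a bona fide Q-tree, and well-based. I would then verify \ref{enumitem:preQ_fe} (leaf edges $\le 0$) and that no new constraint is violated off the paths, where nothing changed.

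For invertibility, I would describe $\psi^{-1}$: given a well-based $Q$-tree, its paths are determined by the sign pattern exactly as before; on each path, strongness plus the requirement that in an $H$-tree every leaf edge has label $0$ forces a unique ``pull-up'' of the deficit $-f_E(l(v))$ back into the vertex labels $f_V$ along the path (distributing it so that \ref{enumitem:sum_condition} holds and leaf labels become $0$), and \ref{enumitem:H_fV} is automatic because only bottom vertices on paths receive excess. Checking $\psi^{-1}\circ\psi=\mathrm{id}$ and $\psi\circ\psi^{-1}=\mathrm{id}$ is then a matter of observing that the excess removed and the deficit created are in bijection via a single integer per path, and that the side branches and signs carry no information loss. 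I would close by noting degree $n$ (number of leaves) and base-length $p$ ($=f_E(e_0)$) are manifestly preserved since neither the tree shape nor $f_E(e_0)$ is touched — when $p>0$ the root edge is positive, not on any path, and when $p\le 0$ the root path's terminal leaf label is forced to $0$ by well-basedness, so $f_E(e_0)$ is reconstructed consistently.

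The main obstacle I anticipate is making the ``unique redistribution along a path'' genuinely well-defined and showing it is an involution-like bijection rather than merely a surjection: one must be careful that the recursion implied by \ref{enumitem:sum_condition} along a path, with side branches of arbitrary sign contributing, really does have a \emph{unique} solution with the prescribed $f_V=0$ constraints and the prescribed leaf-label behavior, and that the solution respects \emph{all} sign constraints (so that the path structure of the image is literally the same path structure we started from — otherwise $\psi$ and $\psi^{-1}$ would be computed relative to different path decompositions and the argument collapses). A secondary subtlety is the special paths: since they consist of a single edge with label $0$ attached to a vertex with $f_E(v^-)=1$, $f_E(v^\textrm{L})=f_E(v^\textrm{R})=0$, $f_V=0$ in both $H$- and $Q$-trees, they must be checked to be fixed points of the shift (no excess to move), so they contribute nothing and do not interfere; I would isolate this as a short sub-case. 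Once these are nailed down, preservation of $(n,p)$ and bijectivity follow cleanly.
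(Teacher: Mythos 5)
Your high-level plan coincides with the paper's: keep the tree and the sign pattern (hence the paths), move the vertex excesses $f_V$ into the non-positive edge labels along the paths, and read off strongness, well-basedness and the preservation of $(n,p)$ exactly as you describe. The gap sits precisely at the point you flag as the main obstacle, and the resolution you commit to is the wrong one. The correct map shifts \emph{all} edges of a regular path $P_\textrm{reg}(v)$ by the \emph{same} amount, namely the excess $f_V(v)$ of the single bottom vertex initiating that path, and leaves the root path and special paths untouched; the excess of a bottom vertex $w$ lying in the \emph{interior} of some path is absorbed not by that path but by the path $P_\textrm{reg}(w)$ that $w$ itself initiates, whose first edge is \emph{off} the path through $w$. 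In particular, off-path edges must be allowed to move. Your rule instead holds every off-path edge fixed and determines the labels on a path $P$ by forcing $f_V'=0$ at every vertex on $P$, so the leaf label drops by the \emph{accumulated} excess along $P$. That is a different operation and it does not land in $\mathcal{Q}^*_{n,p}$.

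Concretely, take the H-tree in $\mathcal{H}_{2,-1}$ with root edge labelled $-1$, both leaf edges labelled $0$, and internal vertex labelled $2$ (this is the tree of the single submap $\overline{\mathbf{G}}^{(2)}_{0,0}$; interior bottom vertices with positive excess encode exactly the degenerate minimal submaps, so they cannot be avoided). The root path is (root edge, left leaf edge) and $P_\textrm{reg}$ of the internal vertex is the right leaf edge. Your rule applied to the root path (right leaf edge fixed) sends the left leaf label $0\mapsto-2$, giving root-path labels $(-1,-2)$: the path is no longer strong, violating \ref{enumitem:Q_dom}, and the tree is not well-based; applying your rule to the regular path as well ($0\mapsto-2$ on the right leaf edge) makes the vertex relation \ref{enumitem:sum_condition} with $f_V'=0$ fail (the same excess is absorbed twice), so the result is not even a partition tree. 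The correct image shifts only the right leaf edge, $0\mapsto-2$, leaving the root path, and hence well-basedness, intact. The same double-absorption conflict arises at every interior bottom vertex of a regular path with positive excess, and your claim that ``subtracting a nonnegative excess keeps the maximum uniquely at the leaf'' is valid only for a uniform shift of the whole path, which your constraints do not produce. Your inverse has the matching defect: the deficit $-f_E(l(v))$ must be assigned entirely to the initiating vertex $v$ (with the path shifted back uniformly), not ``distributed along the path'', or it is neither unique nor inverse to the forward map. If you replace your relabeling rule by the uniform per-path shift just described (root and special paths untouched), the remaining verifications you list go through and reproduce the paper's proof.
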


\begin{proof} 
    We will have to get rid of the $f_V(v)\neq0$ cases that can occur when $v$ is a bottom vertex, while keeping requirement~\ref{enumitem:sum_condition}. 
    We can do this by lowering the labels on the path that starts at $v$.  

    To be precise: 
    Let $\psi$ be the map from H-trees to well-based Q-trees by keeping the rooted tree and sending $(f_V,f_E)\mapsto (f_V',f_E')$ such that $f_V'=0$ and
    \begin{align}
        f_E'(e)=\begin{dcases}
            f_E(e) - f_V(v) &\textrm{ if } e\in P_\textrm{reg}(v)\\
            f_E(e)          &\textrm{ otherwise}
        \end{dcases}.
    \end{align}
    Note that (non-)positive edges remain (non-)positive, so that the paths are invariant under $\psi$. 
    Furthermore, strong paths remain strong, since all edge labels within a path are shifted by the same amount.
    
    It is therefore easy to see that the $\psi$-image of an H-tree is indeed a well-based Q-tree of the same base-length and degree.

    We can find the inverse $\psi^{-1}$ that maps well-based Q-trees to H-trees by, again, keeping the rooted tree and sending $(f_V,f_E)\mapsto (f_V',f_E')$ such that 
    \begin{align}
        f_V'(v)=\begin{dcases}
            -f_E(l(v)) & \textrm{ if $v$ a bottom vertex}\\  
            0 & \textrm{ otherwise}
        \end{dcases}
    \end{align}
    and
    \begin{align}
        f_E'(e)=\begin{dcases}
            f_E(e) - f_E(l(v)) &\textrm{ if } e\in P_\textrm{reg}(v)\\
            f_E(e)          &\textrm{ otherwise}
        \end{dcases},
    \end{align}
    where we remind the reader that $l(v)$ is the final edge in the path $P_\textrm{reg}(v)$.
    Also note that since the paths are strong, the paths are again preserved, and we have $f_E(e)=0$ only when $e$ is incident to a leaf.
\end{proof}

For $p<0$ there exist Q-trees that are not well-based, for which the root path ends in an edge with a strictly negative label.
To include these, we can also find a bijection for Q-trees in general.

\begin{proposition}
    For all $p<0$ and all $n$ there exists a bijection
    \begin{align}
        \hat{\psi}_p: \bigcup_{p\leq p' \leq0}\mathcal{H}_{n,p'} \rightarrow \mathcal{Q}_{n,p}.
    \end{align}
\end{proposition}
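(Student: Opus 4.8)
The target is to construct, for each fixed $p<0$ and degree $n$, a bijection $\hat\psi_p$ from the disjoint union $\bigcup_{p\le p'\le 0}\mathcal{H}_{n,p'}$ to $\mathcal{Q}_{n,p}$. The key structural fact is that a Q-tree of base-length $p<0$ fails to be well-based precisely when its root path $P_0(v_0)$ ends on an edge with a strictly negative label, and (by strongness of that path) the label sequence along the root path is strictly increasing up to that final negative value. My plan is to peel off the root path as the locus of all the "information" that distinguishes $\mathcal{Q}_{n,p}$ from $\mathcal{Q}^*_{n,p}$, and to use that information to recover which $p'$ we started from together with a well-based Q-tree of base-length $p'$; then compose with $\psi^{-1}$ from Proposition~\ref{prop:H_Q*_bij}.

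\textbf{Step 1 (reduce to the well-based case via the root path).} First I would isolate the root path of a general Q-tree $Q\in\mathcal{Q}_{n,p}$. Let $m=f_E(l(v_0))\le 0$ be the label of the final edge of the root path; if $m=0$ the tree is well-based and $\hat\psi_p$ should just invert it to an H-tree of base-length $p$. If $m<0$, the idea is to uniformly shift \emph{every} edge label along the root path up by $-m$, so that the final edge becomes $0$ and the root edge label becomes $p-m=:p'$. Because all labels on a strong path move by the same constant, the path stays strong, non-positive edges along it stay non-positive (here one must check: since the labels were strictly increasing and the maximum was $m\le 0$, shifting by $-m$ keeps every label $\le 0$ except possibly the last, which becomes exactly $0$ — this is consistent with \ref{enumitem:preQ_fe} and with the leaf condition), and requirement~\ref{enumitem:sum_condition} with $f_V\equiv 0$ is preserved at every vertex of the root path because at each such vertex exactly one of the edges incident to it that lies on the path is "above" and one "below," so the $+c$/$-c$ contributions cancel — except at the branch vertices where the path turns, which need the little computation that the off-path child edge is the one that absorbs nothing (it is positive and unchanged). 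The resulting tree is a well-based Q-tree of base-length $p'$, and since the original root-path labels were strictly increasing with maximum $m<0$ we have $p'=p-m>p$, and also $p'\le 0$ because $f_E(e_0)=p<0$ so $p'=p-m$ with $m\ge p$ (as $p$ is itself a label on the root path, being the smallest? — more carefully, $p$ is the label of the root edge $e_0$, which is the first edge of the root path, hence the \emph{smallest} label on that strictly increasing path, so $p\le m\le 0$, giving $p<p'=p-m\le 0$). Thus every $p'$ in the range $p<p'\le 0$ (together with $p'=p$ in the trivial $m=0$ case) arises.

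\textbf{Step 2 (the inverse: re-insert the root-path descent).} Conversely, given a well-based Q-tree $Q'$ of base-length $p'$ with $p\le p'\le 0$, I would shift every edge label along its root path (which ends on a $0$-labeled edge) \emph{down} by $p'-p\ge 0$. This makes the root edge label $p$, makes the final edge label $-(p'-p)\le 0$, and keeps the path strong with strictly-increasing-but-now-nonpositive labels; one checks as above that \ref{enumitem:sum_condition} and \ref{enumitem:preQ_fe} survive and that no new positive edges are created on the root path (all labels were $\le 0$ and we only decreased them). The result is a Q-tree of base-length $p$; it is well-based iff $p'=p$. That these two operations are mutually inverse is immediate once one knows the root path is intrinsically defined (it depends only on the sign pattern of labels, which the shift preserves, so the root path of $Q$ and of its image coincide as sets of edges). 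Combining: $\hat\psi_p := (\text{Step 2 on each }p'\text{-summand})^{-1}$ viewed the right way, i.e. $\hat\psi_p$ sends an H-tree of base-length $p'$ first through $\psi$ to $\mathcal{Q}^*_{n,p'}$ and then through the Step-2 root-path shift to $\mathcal{Q}_{n,p}$; its inverse decomposes a Q-tree by reading $p'$ off the root-path maximum, undoing the shift, and applying $\psi^{-1}$.

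\textbf{Main obstacle.} The delicate point is the behaviour at the \emph{branch vertices of the root path} — the vertices where the path, after its first step to the rightmost non-positive child, repeatedly turns to the leftmost non-positive child. At such a vertex $v$ the parent edge and \emph{one} child edge lie on the path while the other child edge does not; I must verify that (i) the off-path child edge is positive (so it is not itself the start of some other path, consistent with disjointness of paths), (ii) relation~\eqref{eq:fV_fE_relation} with $f_V(v)=0$ forces the two on-path edges to shift in opposite directions so the uniform shift is consistent, and (iii) after the shift the off-path positive edge is still positive and the reachability rules defining the path are unchanged so that the root path is genuinely invariant. This is a finite local check but it is where an oversight would break well-definedness; everything else (disjointness of the root path from regular/special paths, preservation of strongness, the range of $p'$) follows from the already-established lemmas about paths and from the arithmetic of strictly increasing integer sequences with nonpositive maximum.
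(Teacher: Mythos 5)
Your construction is essentially the paper's own proof: $\hat\psi_p$ is just $\psi$ together with a uniform shift of the root-path labels by $p-p'$, and the inverse recovers $p'$ from the label of the last edge of $P_0(v_0)$, with the strong-path condition \ref{enumitem:Q_dom} guaranteeing that this recovered base-length lies between $p$ and $0$. Two incidental claims in your write-up are inaccurate but never actually used: strongness does not make the root-path labels strictly increasing (it only gives a unique maximum at the last edge, which is all your argument needs), and the off-path child edge at a root-path vertex need not be positive (it may be non-positive and start a regular path, which is harmless since paths are disjoint and the shift touches only root-path edges).
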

\begin{proof}
    The map $\hat{\psi}_p$ and its inverse are very similar to $\psi$ and its inverse. 
    The only difference occurs in $\hat{\psi}_p$ when an edge is in the root path $P_0(v_0)$.
    In this case for $\hat{\psi}_p$, we set 
    \begin{align}
        f_E'(e)=f_E'(e)-p'+p &\textrm{ if }e\in P_0(v_0),
    \end{align}
    where $p'$ is the base-length of the H-tree. 
    Note that $p\leq p'$, thus the label is lowered.
    All other mappings are the same as for $\psi$. 

    Similarly, for $\hat{\psi}^{-1}_p$, we set 
    \begin{align}
        f_E'(e)=f_E(e) - f_E(l(v_0)) \textrm{ if }e\in P_0(v_0),
    \end{align}
    where $l(v_0)$ is the last edge in the path $P_0(v_0)$.
    The strong path condition \ref{enumitem:Q_dom} ensures that the label of the edge incident to the root in the image is between $p$ and $0$. 
\end{proof}

\FloatBarrier
\section{Enumeration}\label{sec:enumeration}
\FloatBarrier

To enumerate the various trees, let's define the base-$p$ generating functions for H-trees, pre-Q-trees and Q-trees as
\begin{align}
    H^{(p)}(t)=\sum_{n\geq1} |\mathcal{H}_{n,p}| t^n,\\
    \hat{Q}^{(p)}(t)=\sum_{n\geq1} |\hat{\mathcal{Q}}_{n,p}| t^n,\\
    Q^{(p)}(t)=\sum_{n\geq1} |\mathcal{Q}_{n,p}| t^n.
\end{align}

\begin{proposition}\label{prop:gen_Q_tree}
    As before, let $R(t)\in\Z[[t]]$ be the unique formal power series solution to
    \begin{align}
        \sum_{n\geq0}\frac{1}{n+1}\binom{2n}{n}\binom{2n}{n} R(t)^{n+1} = t,
    \end{align}
    with $[t^0]R(t)=0$.

    Then we have
    \begin{align}
        Q^{(p)}(t)=\sum_{n\geq \max\{0,p\}}\frac{1}{n+1}\binom{2n}{n}\binom{2n-p}{n-p} R(t)^{n+1}.
    \end{align}
\end{proposition}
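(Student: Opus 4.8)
I would first compute the pre-Q-tree numbers, which is the easy, purely bijective part. Because a pre-Q-tree has $f_V\equiv 0$, the excess relation forces, at every internal vertex, the parent-edge label to equal the sum of the two child-edge labels plus one; hence all edge labels are recovered bottom-up from the $n$ leaf-edge labels $-a_1,\dots,-a_n$, and the leaf condition is exactly $a_i\ge 0$. Summing that relation down to the root gives $f_E(e_0)=(n-1)-\sum_i a_i$, i.e. $\sum_i a_i=n-1-p$. Thus a pre-Q-tree of degree $n$ and base-length $p$ is the same datum as a binary plane tree with $n$ leaves together with a weak composition of $n-1-p$ into $n$ parts, so $|\hat{\mathcal Q}_{n,p}|$ is a Catalan number times a binomial coefficient, and reindexing the degree by $n'=n+1$ one gets $\hat{Q}^{(p)}(x):=\sum_{n'}|\hat{\mathcal Q}_{n',p}|x^{n'}=\sum_{n\ge\max\{0,p\}}\frac1{n+1}\binom{2n}{n}\binom{2n-p}{n}x^{n+1}$.

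This already recasts the proposition. For $p=0$ the last formula reads $\hat{Q}^{(0)}(x)=\sum_{n\ge0}\frac1{n+1}\binom{2n}{n}\binom{2n}{n}x^{n+1}$, so \eqref{eq:R_def} says precisely $\hat{Q}^{(0)}(R(t))=t$: the series $R$ is the compositional inverse of $\hat{Q}^{(0)}$. And since $\binom{2n-p}{n-p}=\binom{2n-p}{n}$, the assertion to be proved is exactly the substitution identity $Q^{(p)}(t)=\hat{Q}^{(p)}(R(t))$, equivalently $\hat{Q}^{(p)}=Q^{(p)}\circ\hat{Q}^{(0)}$. So everything reduces to this one identity between the Q-tree and pre-Q-tree series.

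This is the heart of the matter. I would decompose a Q-tree at its root edge $e_0$: either it is a single leaf (possible only for $p\le 0$), or its root is internal and removing the two subtrees leaves two smaller trees of base-lengths $a,b$ with $a+b=p-1$. The delicate point is that the strong-path requirement is not local: the regular, root, or special path issuing from the root descends into one of the two subtrees, and strongness couples the label $p$ of $e_0$ with labels lying deep in that subtree. To close the recursion one must therefore track, besides ordinary Q-trees, auxiliary families of labelled trees whose root edge is the first edge of a strong non-positive path that descends by always taking the leftmost non-positive child — this is the shape a subtree has when a path enters it from above — and do so with a variable recording the root-edge label. This yields a finite system of functional equations; after the (legitimate) change of variable $t\mapsto R(t)$ the system becomes solvable in closed form, and the resulting expression for the ordinary Q-tree series is $\hat{Q}^{(p)}(R(t))$. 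A more bijective route, closer in spirit to the rest of the paper, would be to prove $\hat{Q}^{(p)}=Q^{(p)}\circ\hat{Q}^{(0)}$ directly, by exhibiting the unique decomposition of a pre-Q-tree of base-length $p$ into a maximal ``Q-part'' containing the root — cut off along the $0$-labelled edges where path-strongness first fails — with an arbitrary pre-Q-tree of base-length $0$ hanging below each such cut, and checking that this cut is well defined and inverse to regrafting.

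Either way the main obstacle is the same: bookkeeping the non-local path conditions when passing between Q-trees and pre-Q-trees — through the auxiliary left-descending tree families in the functional-equation version, or through the precise definition of the cut in the bijective version. The remaining ingredients — the pre-Q-tree count above, and, once the system is written in the variable $R$, its resolution — are routine.
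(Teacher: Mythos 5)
Your first half coincides with the paper: the count of pre-Q-trees (a Catalan number times $\binom{2n-p}{n-p}$, read off from the leaf labels and the sum condition) is exactly Lemma~\ref{lem:Qhat_gen}, and the reduction of the proposition to the single substitution identity $\hat{Q}^{(p)}=Q^{(p)}\circ\hat{Q}^{(0)}$, using that $R$ is the compositional inverse of $\hat{Q}^{(0)}$, is exactly how the paper concludes. But that identity is the actual content of the proposition (it is Lemma~\ref{lem:decomp} in the paper), and neither of your two sketches establishes it. The functional-equation route only asserts that some finite system with auxiliary left-descending families exists and ``becomes solvable in closed form'' with the desired answer; nothing checkable is written down, so this is essentially assuming the conclusion.

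The bijective route is the paper's, but your description of the cut would fail: you propose to cut ``along the $0$-labelled edges where path-strongness first fails'', yet a non-strong path need not contain any $0$-labelled edge (for instance successive labels $-1,-2,-1$ along a path, whose maximum $-1$ is attained at the first edge). The correct cut is at the \emph{strongest} edge of each path, i.e.\ the edge closest to the root attaining the maximal label, restricted to those strongest edges reachable from the root without crossing another one; truncating there yields a Q-tree because every later edge of the path carries a strictly smaller label, so the truncated path is strong. Moreover, the subtree removed above a cut edge has in general a strictly negative root label, so one must also \emph{rebase} it by shifting the labels along its root path by the cut-edge label; only then are the hanging pieces base-$0$ pre-Q-trees, and the map is invertible precisely because the cut-edge labels survive as the leaf labels of the truncated Q-tree while the paths are unchanged by the shift, so regrafting can undo the rebasing. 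Without this cut-at-strongest-edge-plus-rebasing mechanism (and the check that it is a bijection), the identity $\hat{Q}^{(p)}=Q^{(p)}\circ\hat{Q}^{(0)}$ --- which you yourself identify as the heart of the matter --- remains unproved.
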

From this result, the generating function of H-trees follows directly:
\begin{corollary}\label{coro:H_gen_fun}
    Let $R(t)$ be as above. 
    Then we have
    \begin{align}
        H^{(p)}(t)=\begin{dcases}
            \sum_{n\geq p}\frac{1}{n+1}\binom{2n}{n}\binom{2n-p}{n-p} R(t)^{n+1} & \textrm{ if } p\geq0, \\
            \sum_{n\geq 1}\frac{1}{n+1}\binom{2n}{n}\binom{2n-p-1}{n-p} R(t)^{n+1} & \textrm{ if } p<0.
        \end{dcases}
    \end{align}
\end{corollary}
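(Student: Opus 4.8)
The plan is to deduce the formula for $H^{(p)}(t)$ from Proposition~\ref{prop:gen_Q_tree} by pushing it through the bijections of Section~\ref{sec:Q_trees}, treating $p\geq0$ and $p<0$ separately. For $p\geq0$ I would first check that every Q-tree of base-length $p$ is already well-based, i.e.\ $\mathcal{Q}^{*}_{n,p}=\mathcal{Q}_{n,p}$. When $p>0$ this is immediate from the definition of well-based. When $p=0$ the root path $P_0(v_0)$ exists and has the root edge $e_0$, which carries label $f_E(e_0)=0$, as its first edge; by the strong-path condition~\ref{enumitem:Q_dom} the edge labels along $P_0(v_0)$ attain their maximum uniquely at the last edge, and since all these labels are non-positive while the first one equals $0$, that maximum must be $0$, so the last edge coincides with $e_0$ and in particular has label $0$; hence the tree is well-based. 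Composing with the bijection $\psi$ of Proposition~\ref{prop:H_Q*_bij} then gives $H^{(p)}(t)=Q^{(p)}(t)$, and since $\max\{0,p\}=p$, Proposition~\ref{prop:gen_Q_tree} is exactly the asserted expression.

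For $p<0$ I would invoke the bijection $\hat\psi_p$ constructed in Section~\ref{sec:Q_trees}. Taking cardinalities of $\hat\psi_p\colon\bigcup_{p\leq p'\leq0}\mathcal{H}_{n,p'}\to\mathcal{Q}_{n,p}$ (a disjoint union, as distinct $p'$ yield distinct base-lengths) gives, for every $p\leq0$,
\begin{align}
    Q^{(p)}(t)=\sum_{p'=p}^{0}H^{(p')}(t),
\end{align}
the value $p=0$ reducing to the identity $Q^{(0)}=H^{(0)}$ established in the previous paragraph. Subtracting this relation for $p$ from the one for $p+1$ --- legitimate since $p<0$ forces $p+1\leq0$ --- telescopes to $H^{(p)}(t)=Q^{(p)}(t)-Q^{(p+1)}(t)$ for all $p<0$.

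It then remains only to substitute Proposition~\ref{prop:gen_Q_tree}. Since $\max\{0,p\}=\max\{0,p+1\}=0$ in this range,
\begin{align}
    H^{(p)}(t)=\sum_{n\geq0}\frac{1}{n+1}\binom{2n}{n}\left(\binom{2n-p}{n-p}-\binom{2n-p-1}{n-p-1}\right)R(t)^{n+1},
\end{align}
and Pascal's rule $\binom{2n-p}{n-p}=\binom{2n-p-1}{n-p}+\binom{2n-p-1}{n-p-1}$ collapses the parenthesis to $\binom{2n-p-1}{n-p}$. Finally the $n=0$ summand vanishes because $\binom{-p-1}{-p}=0$ when $p<0$, so the sum may be started at $n=1$, which is precisely the stated formula.

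I do not expect a genuine obstacle here: essentially all the content is already contained in Propositions~\ref{prop:gen_Q_tree} and~\ref{prop:H_Q*_bij} together with the bijection $\hat\psi_p$. The only points that need a line of care are the two small case-checks above --- that base-length-$0$ Q-trees are automatically well-based, and that the telescoping range $p+1\leq0$ is valid --- and the bookkeeping of the boundary index, namely the shift $\max\{0,p\}$ in Proposition~\ref{prop:gen_Q_tree} and the vanishing of the $n=0$ term after applying Pascal's rule.
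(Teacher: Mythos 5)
Your argument is correct and follows the paper's own route: for $p\geq0$ it uses $\psi$ together with the observation that all Q-trees of non-negative base-length are well-based (your strong-path check at $p=0$ just makes explicit what the paper asserts), and for $p<0$ it uses $\hat\psi_p$ to get $H^{(p)}(t)=Q^{(p)}(t)-Q^{(p+1)}(t)$, after which Pascal's rule yields the stated binomial. The extra bookkeeping you supply (telescoping range, vanishing $n=0$ term) is exactly the routine verification the paper leaves implicit.
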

\begin{proof}
    The first case follows directly from the bijection between H-trees and well-based Q-trees and the fact the all Q-trees are well-based for $p\geq0$.
    The second case follows from the bijection between unions of H-trees and Q-trees, which gives $H^{(p)}(t)=Q^{(p)}(t)-Q^{(p+1)}(t)$ for $p<0$.
\end{proof}

\noindent To prove the proposition, let's first have a look at the pre-Q-trees.
Note that they are easily counted:
\begin{lemma}\label{lem:Qhat_gen}
    The number of pre-Q-trees of degree $n+1$ and base-length $p$ are
    \begin{align}
        |\hat{\mathcal{Q}}_{n+1,p}|=\frac{1}{n+1}\binom{2n}{n}\binom{2n-p}{n-p},
    \end{align}
    which gives
    \begin{align}
        \hat{Q}^{(p)}(t)=\sum_{n\geq0}\frac{1}{n+1}\binom{2n}{n}\binom{2n-p}{n-p}t^{n+1}.
    \end{align}
\end{lemma}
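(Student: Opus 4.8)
The plan is to count pre-Q-trees directly, exploiting the fact that conditions \ref{enumitem:preQ_fe} and \ref{enumitem:preQ_fV} are genuinely local and decoupled from the tree shape. A pre-Q-tree of degree $n+1$ consists of a rooted binary plane tree $T$ with $n+1$ leaves (hence $n$ internal vertices and $2n+1$ edges), together with an edge-labeling $f_E$; condition~\ref{enumitem:preQ_fV} forces $f_V\equiv 0$, which turns \ref{enumitem:sum_condition} into the rigid constraint $f_E(v^-)=f_E(v^\textrm{L})+f_E(v^\textrm{R})+1$ at every internal vertex, and the root label is pinned to $p$ by \ref{enumitem:root_label}. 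First I would observe that, given the tree shape and the leaf labels, \emph{all} internal-edge labels are determined by propagating this relation from the leaves toward the root; conversely the root constraint $f_E(e_0)=p$ becomes a single linear condition on the leaf labels. Concretely, if the leaves are $\ell_1,\dots,\ell_{n+1}$ in left-to-right order with labels $x_1,\dots,x_{n+1}\le 0$, then unwinding the recursion gives $p = \sum_i x_i + (\text{number of internal vertices}) = \sum_i x_i + n$, independent of the tree shape (each internal vertex contributes exactly one ``$+1$''). So the data of a pre-Q-tree is: a binary plane tree with $n+1$ leaves, plus nonpositive integers $x_1,\dots,x_{n+1}$ with $\sum_i x_i = p-n$, with no further constraint.

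The second step is then pure enumeration. The number of binary plane trees with $n+1$ leaves is the Catalan number $C_n = \frac{1}{n+1}\binom{2n}{n}$. Writing $x_i = -y_i$ with $y_i\ge 0$, the number of solutions of $\sum_{i=1}^{n+1} y_i = n-p$ in nonnegative integers is $\binom{(n-p)+(n+1)-1}{(n+1)-1} = \binom{2n-p}{n}$. (This requires $n-p\ge 0$; for $p>n$ there are no pre-Q-trees, consistent with the binomial $\binom{2n-p}{n-p}$ vanishing there, and for $p\le 0$ there is always at least the constant term.) Multiplying, $|\hat{\mathcal Q}_{n+1,p}| = \frac{1}{n+1}\binom{2n}{n}\binom{2n-p}{n}$, and since $\binom{2n-p}{n} = \binom{2n-p}{n-p}$ this is exactly the claimed formula. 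The generating-function statement $\hat Q^{(p)}(t)=\sum_{n\ge0}\frac{1}{n+1}\binom{2n}{n}\binom{2n-p}{n-p}t^{n+1}$ follows immediately by summing over $n\ge 0$, noting that the degree is $n+1$ so the variable carries exponent $n+1$.

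The only place that needs care — and what I would treat as the main (if modest) obstacle — is the bookkeeping around degenerate/boundary cases: checking that when an internal-edge label produced by the propagation happens to be positive, this is allowed (pre-Q-trees impose nonpositivity only on \emph{leaf} edges, so internal edges are unconstrained in sign), and conversely that the only sign constraint is on the leaves; also confirming the edge count ($2n+1$ edges, of which $n+1$ are leaf edges and $n$ internal, including the root edge $e_0$) so that the ``$+1$ per internal vertex'' tally is exactly $n$. One should also double-check the small cases $n=0$ (a single leaf, one edge $e_0$ with $f_E(e_0)=p\le 0$, no internal vertices, so $p$ must be $0$ or negative and the formula gives $\frac{1}{1}\binom{0}{0}\binom{-p}{-p}=1$, matching). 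Once these are pinned down, the argument is the two-line product $C_n \times \binom{2n-p}{n}$ above.
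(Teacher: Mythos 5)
Your proof is correct and follows essentially the same route as the paper: with $f_V\equiv0$ the labels are determined by the non-positive leaf labels, which must sum to $p-n$, and the count is the Catalan number $\frac{1}{n+1}\binom{2n}{n}$ times the stars-and-bars count $\binom{2n-p}{n-p}$. Your extra bookkeeping (propagation toward the root, sign of internal edges, boundary cases $p>n$ and $n=0$) just makes explicit what the paper leaves implicit.
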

\begin{proof}
    Note that the labels of a pre-Q-tree of degree $n+1$ and base-length $p$ are completely determined by the labels on the edges incident to the leaves. 
    These labels are non-positive and due to conditions \ref{enumitem:sum_condition}, \ref{enumitem:root_label} and \ref{enumitem:preQ_fV}, their sum is equal to $p-n$.
    Apart from these restrictions all combinations of these labels are valid. 
    Since the number of rooted binary plane trees with $n+1$ leaves are given by the Catalan number $\frac{1}{n+1}\binom{2n}{n}$ and the number of ways to partition $p-n$ into $n+1$ non-positive numbers is $\binom{2n-p}{n-p}$, the lemma follows. 
\end{proof}

Now let's study the relation between pre-Q-trees and Q-trees. 
We will use the fact that every pre-Q-tree has a maximal Q-tree `inside'.
\begin{figure}
    \centering
    \includegraphics[width=\linewidth]{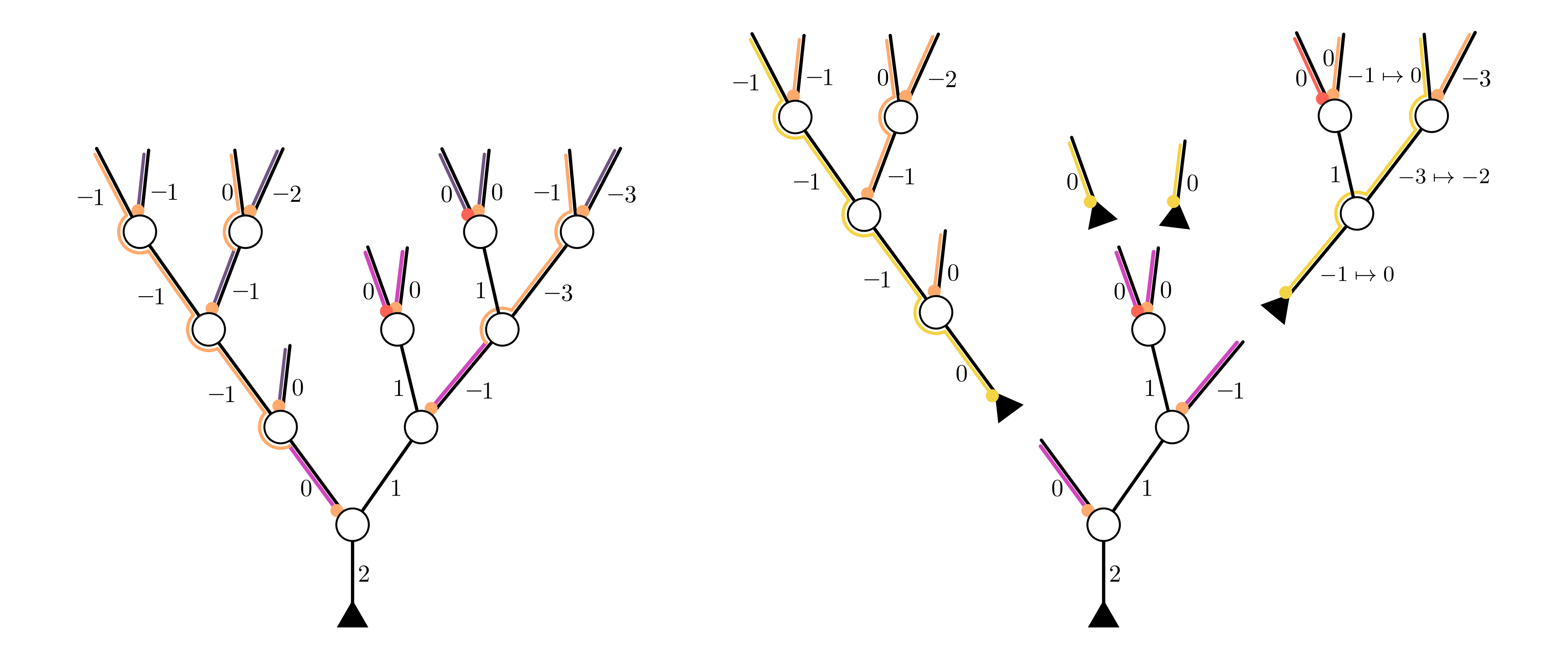}
    \caption{On the left is an example of a pre-Q-tree $q$ which is not a Q-tree, since there are 3 paths that are not strong. 
    The strongest edges that can be reached from the root without traversing other strong edges, thus the set $D(q)$, are colored light purple, while the other strong edges are colored dark purple.
    \\
    The decomposition map $\tilde{\phi}$ lets us `cut' at the light purple edges, giving the picture on the right. 
    The rightmost removed subtree has non-zero label ($-1$) on the root, so needs to be rebased, by increasing the labels along the yellow root path by $1$.
    \\
    The resulting removed subtrees are now all base-$0$ pre-Q-trees. 
    \label{fig:PreQQ_map}}
\end{figure}
To be more precise, let's define the \emph{truncation of a pre-Q-tree at non-positive edge $e$} to be the pre-Q-tree that remains after we contract everything above $e$ (the side that doesn't include the root) into a single leaf vertex.
The corresponding \emph{removed subtree} is pre-Q-tree that remains when we contract everything below $e$ (the side that does include the root) into a single root vertex. 

For a pre-Q-tree $q$ we can find for each path $P(v)$ the \emph{strongest edge}, which is the edge closest to the root for which $f_E$ is maximal.
Note that a path is strong if and only if the final edge is the strongest edge.
Let $D(q)$ be the set of strongest edges that can be reached from the root without traversing another strongest edge.

We can now do a decomposition $\tilde{\phi}: q\mapsto(q_0,\tilde{q}_1,\dots,\tilde{q}_m)$, where $m=|D(q)|$, $q_0$ is the resulting tree that we get after truncating at all edges in $D(q)$ and $\tilde{q}_i$ the removed subtree corresponding to the $i$-th element of $D(q)$.

To each removed subtree $\tilde{q}_i$, we can apply a \emph{rebasing map} $\tilde{q}_i\mapsto q_i$, which lowers the edge labels of all edges in the root path by $f_E(e_0)$, where $e_0$ is the edge incident to the root. 
Note that since $e_0$ is the strongest edge in the path, the rebased removed subtree $q_i$ is still a pre-Q-tree and has the same paths as $\tilde{q}_i$, but always has base-length $0$.

Now consider the decomposition $\phi$ which is simply $\tilde{\phi}$, where we rebase all removed subtrees. See Figure~\ref{fig:PreQQ_map} for an example.

\begin{lemma}\label{lem:decomp}
    Let $q\in\hat{Q}_{n,p}$ be a pre-Q-tree of degree $n$ and base-length $p$ and $\phi(q)=(q_0,q_1,\dots,q_m)$ its decomposition.
    We have $q_0\in Q_{m,p}$ and $q_i\in\hat{Q}_{n_i,0}$ for $i>0$, such that $\sum_{i=1}^m n_i=n$.

    Furthermore, $\phi$ seen as the map
    \begin{align}
        \phi: \hat{Q}_{n,p} \rightarrow \Big\{(q_0,q_1,\dots,q_m): 0<m\leq n,\,\, q_0\in Q_{m,p},\,\, q_i\in\hat{Q}_{n_i,0}\,\,\forall\,\,i>0,\,\, \sum_{i=1}^m n_i=n\Big\}
    \end{align} 
    is a bijection.
\end{lemma}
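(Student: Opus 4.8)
The plan is to verify separately that (i) the output of $\phi$ indeed lands in the claimed codomain, and (ii) $\phi$ is a bijection onto that codomain by constructing an explicit inverse, namely a regrafting/gluing map. For part (i), I would first check $q_0 \in \mathcal{Q}_{m,p}$: truncating $q$ at every edge of $D(q)$ turns each such edge into a leaf-edge, and by construction the edges of $D(q)$ are exactly the strongest edges reachable from the root without crossing another strongest edge, so every path of $q_0$ is a path of $q$ cut off precisely at its strongest edge — hence every path in $q_0$ is strong, giving condition \ref{enumitem:Q_dom}. Conditions \ref{enumitem:sum_condition}, \ref{enumitem:root_label}, \ref{enumitem:preQ_fe} and \ref{enumitem:preQ_fV} are inherited because truncation only deletes internal structure and turns edges with (necessarily non-positive, being on a path) labels into leaf-edges. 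For the removed subtrees, the text has already observed that the rebasing map sends $\tilde q_i$ to a genuine pre-Q-tree $q_i$ of base-length $0$; and since truncation/removal partitions the leaves of $q$ among $q_0$ (which contributes $m$ of them, one per cut) — wait, more carefully: each cut edge of $D(q)$ becomes one leaf of $q_0$ and simultaneously the root of one $\tilde q_i$, while the original leaves of $q$ all lie strictly above some cut edge (every leaf-edge is on a path, hence lies above the strongest edge of that path, which is at or above some element of $D(q)$); so the original $n$ leaves are distributed as $\sum_{i=1}^m n_i = n$ with $n_i = $ number of leaves of $q_i$, and $m \le n$ since each $n_i \ge 1$.

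For part (ii), I would build the inverse map $\Phi$ explicitly: given $(q_0, q_1, \dots, q_m)$ with $q_0 \in \mathcal{Q}_{m,p}$ and $q_i \in \hat{\mathcal{Q}}_{n_i,0}$, list the leaves of $q_0$ in the canonical (say, left-to-right) planar order as $\lambda_1, \dots, \lambda_m$; for each $i$, "un-rebase" $q_i$ by raising the labels along its root path by the label that the edge $\lambda_i^-$ carries in $q_0$ (this is the strongest edge of the path of $q_0$ ending at $\lambda_i$), then graft the result at $\lambda_i$, identifying the root of the shifted $q_i$ with $\lambda_i$. One then checks that $\Phi$ produces a pre-Q-tree of degree $n$ and base-length $p$, and that $\phi \circ \Phi = \mathrm{id}$ and $\Phi \circ \phi = \mathrm{id}$. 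The key structural fact making this work, which I would isolate as the crux, is: \emph{in the tree $\Phi(q_0,\dots,q_m)$, the set $D$ of strongest-edges-reachable-without-crossing-another equals precisely the set of grafting edges $\{\lambda_i^-\}$.} Both inclusions need the strong-path property of $q_0$: since each path of $q_0$ is strong, its strongest edge is its final (leaf) edge $\lambda_i^-$, so after grafting, the path of the glued tree starting where $q_0$'s path started runs up to $\lambda_i^-$ and then continues into $q_i$'s root path; because we shifted $q_i$'s root-path labels up by the (maximal-in-$q_0$-path) value at $\lambda_i^-$, the edge $\lambda_i^-$ remains a location where the running maximum is attained, and it is the one closest to the root in the merged path — so $\lambda_i^- \in D$, and conversely no strongest edge strictly below any $\lambda_i^-$ can be reached without first crossing $\lambda_i^-$. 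Granting this fact, $\phi \circ \Phi = \mathrm{id}$ is immediate (truncating at $D = \{\lambda_i^-\}$ undoes the grafting, and rebasing undoes the un-rebasing since $\lambda_i^-$ is the strongest edge of its path), and $\Phi \circ \phi = \mathrm{id}$ follows because $\phi$ records exactly the data $(q_0; q_1,\dots,q_m)$ that $\Phi$ consumes, with the leaf-ordering of $q_0$ matching the indexing of the $\tilde q_i$.

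The main obstacle I anticipate is precisely pinning down that crux about $D(\Phi(\cdot)) = \{\lambda_i^-\}$: one has to argue that grafting a $q_i$ (whose own internal paths were already strong, being removed subtrees of the original, but whose root-path, after the upward shift, now has its maximum *not* uniquely at the leaf in general — indeed $q_i$ need not be a Q-tree, only a pre-Q-tree) does not create a *new* strongest edge that is reachable from the root before $\lambda_i^-$, and does not destroy $\lambda_i^-$'s status. Since $\lambda_i^-$ carries the maximum label of $q_0$'s path and the shift makes $q_i$'s root-path start exactly at that value and only go down-or-up-but-never-above-at-an-earlier-point — here I need the strong-path condition on $q_0$ to guarantee the max is attained *only* at $\lambda_i^-$ within the $q_0$-portion — the strongest edge of the full merged path is the edge of $D$ we want. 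I would also need to double-check the boundary bookkeeping: that "degree" counts leaves correctly through grafting (a leaf $\lambda_i$ of $q_0$ ceases to be a leaf and is replaced by the $n_i \ge 1$ leaves of $q_i$, so degrees add as $\sum n_i = n$, consistent with $m \le n$), and that base-length $p$ is preserved because $\lambda_i^-$ being a leaf-edge of $q_0$ means $i$ never indexes the root edge $e_0$, so the root-label condition \ref{enumitem:root_label} of $q_0$ carries over verbatim.
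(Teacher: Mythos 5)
Your proposal is correct and follows essentially the same route as the paper: check that truncation at the set $D(q)$ of strongest edges yields a Q-tree $q_0$ with the leaves of $q$ distributed over the rebased base-$0$ subtrees, and invert $\phi$ by un-rebasing via the labels on the leaf-edges of $q_0$ and re-gluing. The only difference is one of detail: you spell out the key verification that in the re-glued tree the set of strongest edges reachable from the root is exactly the set of gluing edges $\{\lambda_i^-\}$ (using the strong-path property of $q_0$), a point the paper's proof leaves implicit.
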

\begin{proof}
    Since we contract all subtrees above the strongest edges, it is clear that all paths of $q_0$ are strong, so it is a Q-tree.
    It has still base-length $p$ and has $m$ leaves. 
    The original $n$ leaves are distributed over the $q_i$'s, so we have $\sum_{i=1}^m n_i=n$.

    Now we need to check that this is a bijection. 
    Given $(q_0,q_1,\dots,q_m)$, we can use the labels on the $m$ edges incident to the leaves of $q_0$ to invert the rebasing maps, since the paths are invariant under the rebasing. 
    Next, we can simply `re-glue' the edges incident to leaves of $q_0$ to the corresponding trees $\tilde{q}_i$, inverting $\phi$. 
    
    This reverse process clearly gives a pre-Q-tree and is an inverse of $\phi$, so $\phi$ is a bijection.
\end{proof}

\begin{proof}[Proof of proposition \ref{prop:gen_Q_tree}]
    From the decomposition bijection, we get
    \begin{align}
        \hat{Q}^{(p)}(t)=Q^{(p)}(\hat{Q}^{(0)}(t)).
    \end{align}
    By definition, we have $\hat{Q}^{(0)}(R(t))=t$, we get
    \begin{align}
        Q^{(p)}(t)=\hat{Q}^{(p)}(R(t)),
    \end{align}
    which, together with lemma \ref{lem:Qhat_gen} proves the proposition.
\end{proof}

Theorem \ref{thm:gen_fun_rigid} now follows immediately from Theorem \ref{thm:bijection_rigid_quad_H_tree} and Corollary \ref{coro:H_gen_fun}.

\FloatBarrier
\section{B-, C- and \texorpdfstring{$\Delta$}{Delta}-type rigid quadrangulations}\label{sec:BCD}
\FloatBarrier

\begin{figure}
    \centering
    \includegraphics[width=\linewidth]{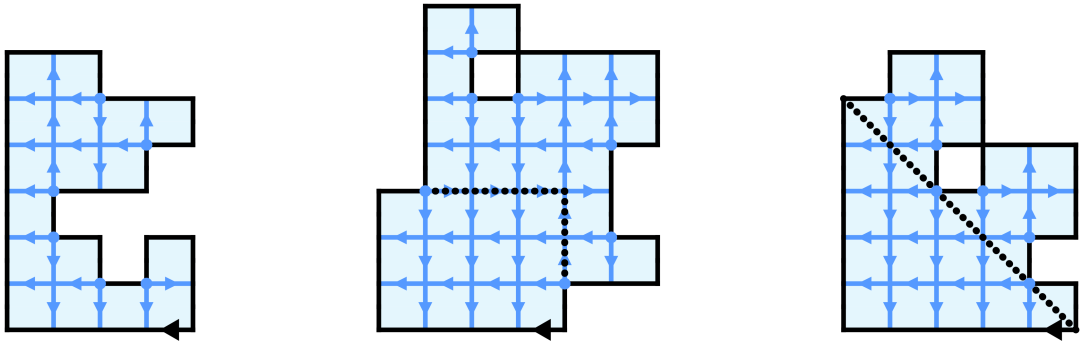}
    \caption{A B-type, a C-type and a $\Delta$-type rigid quadrangulation.
    The B-type rigid quadrangulation has base-length $4$ and co-base-length $6$.
    The C-type rigid quadrangulation has base-length $4$ and co-base-length $3$.
    The $\Delta$-type rigid quadrangulation has base-length $5$ and co-base-length $5$ and degeneracy $3$.
    \label{fig:BCD_2}}
\end{figure}

We like to remind the reader that a B-type rigid quadrangulation is a double based rigid quadrangulation, with no further requirements, and C-type rigid quadrangulations and $\Delta$-type rigid quadrangulations are B-type rigid quadrangulations with an embedded rectangle or triangle, respectively. 
See Figure~\ref{fig:BCD_2}.
Recall that the \emph{degeneracy} of a $\Delta$-type rigid quadrangulation is the number of concave corners on the hypotenuse, plus one.

The generating functions of B- and C-type rigid quadrangulations are equal to the generating functions of B- and C-patches, due to \cite{Budd_rectilinear_2025}, which have been computed by \cite{Bousquet-Melou_generating_2020}.
In the latter work, the generating function of $\Delta$-type rigid quadrangulations appear as a natural building block, but has not been given an interpretation there (nor in \cite{Budd_rectilinear_2025}).

In this section, we will show that the generating function of $\Delta$-type rigid quadrangulations can be derived using the tree bijection central to this paper. 
Furthermore, we will relate B- and C-type rigid quadrangulations to these $\Delta$-type rigid quadrangulations, giving the desired expressions for their generating functions.

\subsection{Corresponding Q-trees}
We will look at our bijection from rigid quadrangulations to Q-trees and restrict it to our B-, C- and $\Delta$-type rigid quadrangulations. 
Only the restriction to $\Delta$-type rigid quadrangulations is needed to prove the generating functions, but the other restrictions are added for completeness.

\begin{figure}
    \centering
    \includegraphics[width=\linewidth]{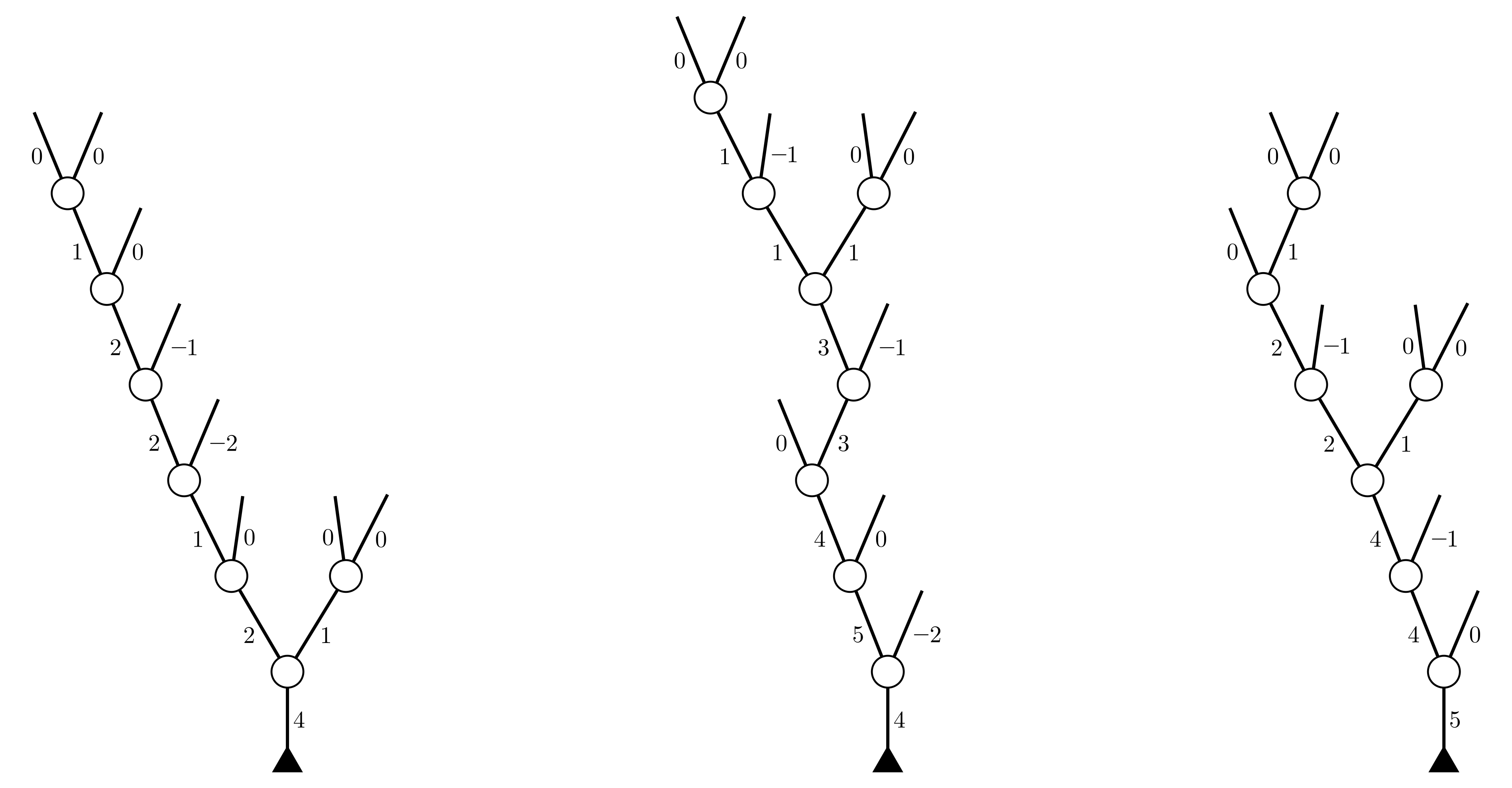}
    \caption{The Q-trees corresponding to the examples in Figure~\ref{fig:BCD_2}.
    It is easy to check that they have the correct form for a B-, C- or $\Delta$-type rigid quadrangulation respectively.
    \label{fig:BCD_tree_spec}}
\end{figure}

\begin{proposition}\label{prop:BCD_disk_to_Qtree}
The image of the set of B-type rigid quadrangulations with base-length $p$ and co-base-length $q$ under the tree bijection from Theorem~\ref{thm:bijection_rigid_quad_H_tree} composed with $\psi$ from Proposition~\ref{prop:H_Q*_bij}, is a subset of Q-trees, for which:
\begin{itemize}[nosep]
    \item The root has label $p$;
    \item Starting from the root edge, there are exactly $q$ consecutive left child edges, called co-base edges, with all strictly positive labels $a_i>0$, except for the final edge, which has label $a_q=0$.
\end{itemize}

\noindent The image of the set of C-type rigid quadrangulations with base-length $p$ and co-base-length $q$ under the tree bijection from Theorem~\ref{thm:bijection_rigid_quad_H_tree} composed with $\psi$ from Proposition~\ref{prop:H_Q*_bij}, is a subset of Q-trees, for which:
\begin{itemize}[nosep]
    \item The root has label $p$;
    \item Starting from the root edge, there are exactly $q$ consecutive left child edges, called co-base edges, with all labels $a_i\geq p$, except for the final edge, which has label $a_q=0$.
\end{itemize}

\noindent The image of the set of $\Delta$-type rigid quadrangulations with base-length $p$ and co-base-length $q$ under the tree bijection from Theorem~\ref{thm:bijection_rigid_quad_H_tree} composed with $\psi$ from Proposition~\ref{prop:H_Q*_bij}, is a subset of Q-trees, for which:
\begin{itemize}[nosep]
    \item The root has label $p$;
    \item Starting from the root edge, there are exactly $q$ consecutive left child edges, called co-base edges, with the $i$-th co-base edge having a label of at least $a_i\geq p-i p/q$.\footnote{Note that together with requirement \ref{enumitem:preQ_fe} this implies that the final edge has label $a_q=0$.}
    \item The degeneracy of the $\Delta$-type rigid quadrangulations corresponds to the number of left child edges for which the lower bound on the label is exactly met.\footnote{Note that $q$-th edge always meets the bound.} 
\end{itemize}
See Figure~\ref{fig:BCD_tree_gen}
\end{proposition}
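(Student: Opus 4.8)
The plan is to prove the three characterizations together by induction on the co-base-length $q$, working first at the level of H-trees (through Theorem~\ref{thm:bijection_rigid_quad_H_tree}) and transferring to Q-trees only at the end. That transfer through $\psi$ (Proposition~\ref{prop:H_Q*_bij}) costs almost nothing here: a B-, C- or $\Delta$-type rigid quadrangulation has a closed base, so $p>0$ and there is no root path, while $\psi$ only lowers labels along regular paths, which consist of non-positive edges. As the induction will show, every co-base edge is either strictly positive, or it is the terminal leaf edge, which carries label $0$ and hangs below an internal vertex coming from the degenerate minimal submap $\mathbf{G}^{(0)}_{0,p'-1}$ and hence with $f_V=0$; in either case $\psi$ fixes that edge's label, and it also fixes the root edge. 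So it is enough to establish the stated shapes for the H-tree produced by the recursive decomposition of Theorem~\ref{thm:bijection_rigid_quad_H_tree}.

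For the inductive step I look at the unique minimal submap peeled off at the base (Lemma~\ref{lem:minimal_signed_submaps}): its left open side corresponds to the left-child edge $e_1$ of the root, and by the orientation convention of the bijection this is the co-base side, the right one being the opposite side. If $q\ge2$, the co-base is a closed side of length $\ge2$ whose interior vertices are straight, so the boundary vertex $v_{\mathrm{L}}$ adjacent to the base on the co-base side is straight; running through the cases of Lemma~\ref{lem:minimal_signed_submaps} with $v_{\mathrm{L}}$ straight shows that the left open side always has positive size $\ell>0$ and that the piece glued back there is again a complete double based rigid quadrangulation, with co-base-length $q-1$ and base-length $a_1:=f_E(e_1)=\ell>0$. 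If $q=1$, the co-base is a single edge, $v_{\mathrm{L}}$ is convex, and the only possibility is $\mathbf{G}^{(0)}_{0,p-1}$, a u-shaped (hence top) vertex with $k=0$ and $f_V=0$, whose left child is a leaf with edge-label $0$. Iterating, the maximal chain of left-child edges from the root is precisely $e_1,\ldots,e_q$, with $e_1,\ldots,e_{q-1}$ positive --- being base-lengths of genuine double based rigid quadrangulations, hence not leaf edges --- and $e_q=0$ incident to a leaf, so the chain cannot be continued; together with $f_E(e_0)=p$ this is exactly the asserted shape, which settles the B-type case.

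For the C- and $\Delta$-type statements I keep the extra linear constraint pinned to a reference that the recursion carries along unchanged. For C-type, the induction hypothesis is: if a complete double based rigid quadrangulation has co-base-length $q$ and all of its $q-1$ horizontal rays ending at the co-base have length $\ge p$, then one peeling produces on the co-base side a piece with co-base-length $q-1$, base-length $a_1\ge p$ (after the peeling the edges of the height-one horizontal ray become boundary edges of the new base, and that ray has length $\ge p$), and again with all of its horizontal rays of length $\ge p$; the induction then gives $a_i\ge p$ for $i<q$ and $a_q=0$. For $\Delta$-type, I argue against the fixed hypotenuse $x/p+y/q=1$: for such a quadrangulation the height-$h$ horizontal ray has length $\ge p(q-h)/q$, and one peeling turns this into ``the height-$h$ horizontal ray of the co-base-side piece has length $\ge p(q-1-h)/q$'', i.e.\ it reaches the shifted line $x=p(q-1-y)/q$, so that piece has co-base-length $q-1$, base-length $a_1\ge p(q-1)/q$, and its horizontal rays reach the shifted line; unwinding the recursion yields $a_i\ge p(q-i)/q=p-ip/q$ for every $i$. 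For the degeneracy, I match step by step a concave corner lying on the hypotenuse with a peeling step at which the sharp lower bound on $a_i$ is attained (which in particular forces $p(q-i)/q\in\Z$); adding the always-attained case $i=q$, this shows the degeneracy equals $\#\{\,i: a_i=p-ip/q\,\}$, as claimed.

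The main obstacle is precisely the geometric bookkeeping underlying the last paragraph: one must check that peeling a single row off a quadrangulation carrying an embedded rectangle (resp.\ a right triangle with the base and co-base as its legs) again exposes one of the right shape on the co-base side, that the new base-length meets the sharp lower bound, and --- for the triangle --- that equality there is equivalent to a concave corner sitting on the hypotenuse at that height. This amounts to a finite case check over the minimal submaps of Lemma~\ref{lem:minimal_signed_submaps}, combined with tracking how the lengths of the downward and horizontal rays transform under the gluing rules of Section~\ref{sec:gluing}; once these local facts are in hand, the inductions themselves are routine.
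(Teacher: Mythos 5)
Your proposal follows essentially the same route as the paper: peel the minimal submaps of Lemma~\ref{lem:minimal_signed_submaps} along the co-base direction (the chain of left-child edges), identify the $i$-th co-base label with the length of the corresponding horizontal ray hitting the co-base (so the B-, C- and $\Delta$-constraints and the degeneracy count transfer to the labels $a_i$), and observe that $\psi$ leaves the root and co-base labels untouched because they are positive or sit at a vertex with $f_V=0$; your phrasing as an induction on $q$ with the constraint pinned to the original $(p,q)$ is only a cosmetic repackaging of the paper's direct reading of the $q$ signatures. The one difference worth noting is that the paper's proof also runs the argument in reverse (every Q-tree of the stated shape glues back to a B-/C-/$\Delta$-type quadrangulation), which is what the enumeration in Proposition~\ref{prop:D_counting} actually uses, whereas you only prove the forward inclusion --- enough for the statement as literally worded, and your deferred ``finite case check'' of the local geometry is at the same level of detail as the paper's own ``entirely analogous'' step.
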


\begin{figure}
    \centering
    \includegraphics[width=\linewidth]{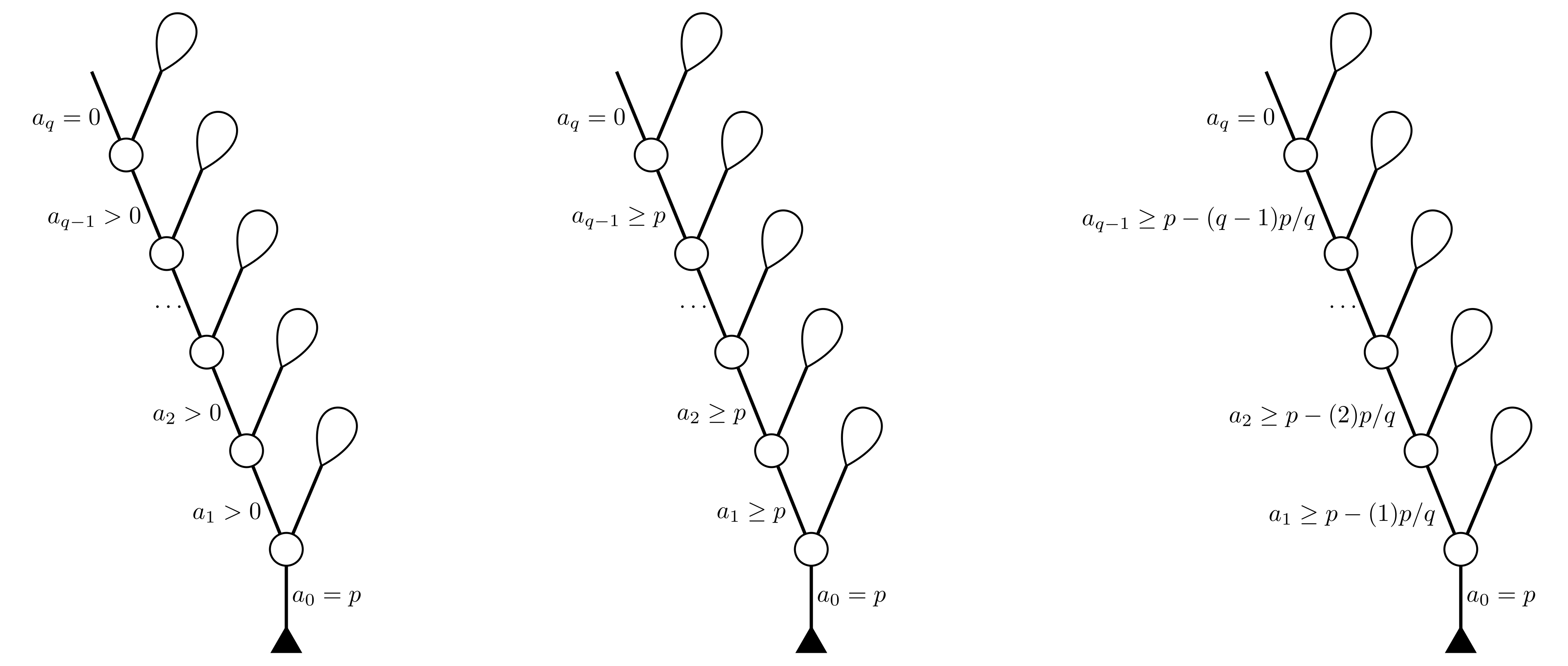}
    \caption{The general structure of the Q-tree corresponding to a B-, C- or $\Delta$-type rigid quadrangulation respectively.
    \label{fig:BCD_tree_gen}}
\end{figure}

\begin{proof}
    Let's start with the B-type rigid quadrangulations.
    By definition, we need $(q-1)$ incoming rays on the co-base, thus we need $(q-1)$ times (continuing to the left open side) a minimal submap of type $\mathbf{G}^{(0)}_{a_i,\bullet}$ with $a_i>0$ or $\mathbf{R}^{(\bullet)}_{a_i,\bullet}$.
    These correspond to all allowed signatures $(a_{i-1},a_i,b_i,k_i)$ with $a_{i-1}>0$ and $a_i>0$.
    After that we need to end the co-base in a convex corner, which means a submap of type $\mathbf{G}^{(0)}_{0,\bullet}$, or equivalently an allowed signature of the form $(a_{q-1}>0,a_q=0,b_q,k_q=0)$.
    
    Following the bijection from Theorem~\ref{thm:bijection_rigid_quad_H_tree}, we get an H-tree with root label $p$ and exactly $q$ co-base edges with positive labels $a_i>0$, except the final edge, that has label $a_q=0$ and starts at a vertex with label $k_q=0$. 

    Continuing to the Q-tree, we see that the labels of the root edge and the co-base edges are unchanged, since they are either positive or they correspond to a path that starts with label $0$.

    The argument also works in reverse: All Q-trees as described above map to H-trees with the same requirements. 
    Furthermore, the vertex at the start of the final consecutive left child of the root edge gets label $0$. 
    When mapping back to a rigid quadrangulation, we get $(q-1)$ times a submap of type  $\mathbf{G}^{(0)}_{a_i,\bullet}$ with $a_i>0$ or $\mathbf{R}^{(\bullet)}_{a_i,\bullet}$, and we end in a submap of type $\mathbf{G}^{(0)}_{0,\bullet}$. 
    Gluing these (and all other submaps), clearly gives a B-type rigid quadrangulation, with base-length $p$ and co-base-length $q$.
    \\
    \\
    The proof for C-type rigid quadrangulations and $\Delta$-type rigid quadrangulations is very similar. 
    With respect to B-type rigid quadrangulations, we get the extra requirements that the rays hitting the co-base have  some minimum lengths.
    
    In particular, for C-type rigid quadrangulations, the rays hitting the co-base should have lengths of at least $p$.
    We can now simply follow the proof for B-type rigid quadrangulations above, where we only need to require that $a_i\geq p$ for the labels of the $(q-1)$ first co-base edges.

    Similarly, for $\Delta$-type rigid quadrangulations the extra requirement is that the $i$-th incoming ray at the co-base has length of at least $p-i p/q$. 
    The proof is otherwise entirely analogous to the proof for B-type rigid quadrangulations.
\end{proof}

\subsection{Counting the trees corresponding to \texorpdfstring{$\Delta$}{Delta}-type rigid quadrangulations}
We are now ready to count the $\Delta$-type rigid quadrangulations, using their Q-tree-equivalents.

\begin{proposition}\label{prop:D_counting}
   The generating function of $\Delta$-type rigid quadrangulations as defined in Theorem~\ref{thm:D_B_C_gfs} is given by
    \begin{align}
        \Delta(t,x,y)=\sum_{n\geq0}\sum_{j=0}^n\sum_{i=0}^n\frac{1}{n+1}\binom{2n-i}{n}\binom{2n-j}{n}x^{i+1}y^{j+1}R^{n+1},
    \end{align} 
    where, as always, $R=R(t)$ is the solution of equation \eqref{eq:R_def}. 
\end{proposition}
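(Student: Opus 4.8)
The strategy is to exploit Proposition~\ref{prop:BCD_disk_to_Qtree}, which identifies $\Delta$-type rigid quadrangulations (weighted by inverse degeneracy) with a family of Q-trees having a distinguished spine of $q$ consecutive left child edges — the co-base edges — whose labels $a_1,\dots,a_q$ satisfy $a_i\geq p-ip/q$. I would first set up a decomposition of such a Q-tree analogous to the $\phi$ decomposition of Lemma~\ref{lem:decomp}: the co-base spine splits the tree into the spine itself, and at each co-base vertex a ``side branch'' (a right subtree) hangs off, which after rebasing is an ordinary pre-Q-tree of base-length $0$. Since $\hat Q^{(0)}(R(t))=t$ (the relation used in the proof of Proposition~\ref{prop:gen_Q_tree}), each such side branch contributes a factor $R = R(t)$, so a $\Delta$-tree of ``co-base-size'' $q$ and total degree contributes $R^{\,q}$ times combinatorial factors coming from the choices of the spine labels and the side-branch base-lengths. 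This is where the Catalan-type double binomial in the claimed formula must emerge.

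\textbf{Key steps in order.} (1) Parametrise: fix the root label $p$ (tracked by $y$) and the co-base length $q$ (tracked by $x$); actually it is cleaner to re-index with $i+1 = q$ so that $x^{i+1}$ in the formula corresponds to $q$ co-base edges, and similarly $j+1 = p$. (2) At the $m$-th co-base vertex (counting from the root), the parent edge has label $a_{m-1}$, the left child edge has label $a_m$, and condition \ref{enumitem:preQ_fV} with $f_V\equiv 0$ forces the right child edge to have label $b_m = a_{m-1} - a_m + (\text{leaf-sum contribution})$; I would carefully bookkeep that the right subtrees, after rebasing to base-length $0$, each contribute an independent pre-Q-tree, and that the whole tree below the spine assembles by the same re-gluing argument as Lemma~\ref{lem:decomp}. (3) The degeneracy weighting $1/(\text{degeneracy})$ has to be matched to the Q-tree side: by Proposition~\ref{prop:BCD_disk_to_Qtree} the degeneracy equals the number of co-base edges meeting their lower bound $a_i = p - ip/q$ with equality; I expect that the $\tfrac{1}{n+1}$ prefactor in the answer, rather than a Catalan number attached to each branch separately, is precisely the trace of summing a geometric-like series over ``which co-base edges are tight'', i.e. the $1/\text{degeneracy}$ weight is exactly what is needed to turn a sum over compositions into the clean binomial. (4) Convert the constrained sum over $(a_1,\dots,a_q)$ with $a_i\geq p-ip/q$ and the sum over side-branch base-lengths into the product of two independent binomial counts $\binom{2n-i}{n}$ and $\binom{2n-j}{n}$ — one binomial for the ``co-base direction'' ($x$, the $a_i$ constraints) and one for the ``base direction'' ($y$), with the shared $\tfrac{1}{n+1}R^{n+1}$ being the Catalan-weighted generating-function substitution $t\mapsto R(t)$ inherited from Proposition~\ref{prop:gen_Q_tree}.

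\textbf{Main obstacle.} The genuinely delicate point is step (3)–(4): correctly accounting for the inverse-degeneracy weight and showing it collapses the naive sum over spine-label compositions (which would give an ungainly expression) into the symmetric double binomial $\binom{2n-i}{n}\binom{2n-j}{n}$ with a single $\tfrac{1}{n+1}$. Concretely, the $\Delta$-condition $a_i \geq p - ip/q = p(q-i)/q$ is a ``staircase below a line of rational slope'' constraint; counting lattice configurations under such a constraint, with a weight $1/(\#\text{tight steps})$, is the cycle-lemma / Catalan-type phenomenon in disguise, and making that identification rigorous — ideally by a direct bijection or a generating-function manipulation showing $\Delta(t,x,y)$ factors through $R(t)$ exactly as stated — is the crux. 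I would attempt it first via the substitution $t = \hat Q^{(0)}$ to pass from $R$ back to $t$, reducing to a purely combinatorial identity about labelled binary trees with a marked spine, and then prove that identity by the cycle lemma applied to the spine labels.
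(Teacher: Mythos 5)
Your high-level route is the same as the paper's: restrict via Proposition~\ref{prop:BCD_disk_to_Qtree}, count the corresponding \emph{pre}-Q-trees with the marked spine, handle the staircase constraint by a cycle-lemma argument tied to the inverse-degeneracy weight, and pass from $t$ to $R(t)$ by the pre-Q-to-Q decomposition. However, as written the bookkeeping contains genuine errors. The claim that each right subtree hanging off the co-base spine ``after rebasing is a pre-Q-tree of base-length $0$'' and ``contributes a factor $R$'', so that a tree of co-base-size $q$ contributes $R^{q}$, is wrong: in a pre-Q-tree all edge labels are determined by the leaf labels (as in Lemma~\ref{lem:Qhat_gen}), the spine's right subtrees have arbitrary base labels, and in the correct formula the exponent of $R$ is $n+1$, the total number of leaves, not $q$. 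The substitution enters exactly once, at the end, via $\Delta(t,x,y)=\hat\Delta(R(t),x,y)$ (the analogue of $Q^{(p)}(t)=\hat Q^{(p)}(R(t))$), after counting the constrained pre-Q-trees with one factor of $t$ per leaf; one must also check that the decomposition never cuts a co-base edge, which holds because (after cycling) those edges carry strictly positive labels. Likewise your attribution of the binomials is off: $\binom{2n-i}{n}$ does not come from ``the $a_i$ constraints'' --- the spine labels are not free parameters at all --- but from the $(i+1)$-fold Catalan convolution counting the tree shapes carrying the spine, which gives $\frac{i+1}{n+1}\binom{2n-i}{n}$, while $\binom{2n-j}{n}$ counts the distributions of the non-positive leaf labels summing to $j-n$ over the $n+1$ leaves; the prefactor $\frac{1}{n+1}$ is already present in that convolution and is not produced by the degeneracy weighting.

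More importantly, the step you yourself flag as the main obstacle is exactly the heart of the proof and is left unexecuted. The paper resolves it by cycling the $i+1$ subtrees attached along the spine (not just the spine labels, which are coupled to the tree shape): the label walk $a_0,\dots,a_{i+1}$ is determined by the subtrees, and among the $i+1$ cyclic rotations exactly $m$ satisfy the constraint $a_k\geq p-kp/q$, where $m$ is the degeneracy of the resulting configuration. With the weight $1/m$ this is an $(i+1)$-to-$m$ correspondence, so the factor $i+1$ cancels and the weighted constrained count becomes $\frac{1}{n+1}\binom{2n-i}{n}\binom{2n-j}{n}$, as required. Until you supply this cycling argument explicitly (and repair the $R$-bookkeeping above), the proposal is an outline rather than a proof.
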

\begin{proof}
    We start with analyzing $\Delta$-type rigid quadrangulations with fixed base-length $j+1$ and fixed co-base-length $i+1$.
    As before, we will call the labels on the root edge and the $i+1$ co-base edges $(a_0=j+1,a_1,\ldots,a_i,a_{i+1}=0)$.
    Note that we require 
    \begin{align}\label{eq:mid_D_req}
    a_k\geq (j+1)(1-k/(j+1)).
    \end{align}

    We will first look at pre-Q-trees that have the correct tree-structure, having $a_0=j+1$ and $a_{i+1}=0$, but \eqref{eq:mid_D_req} does not necessarily hold for $0<k\leq i$.

    Counting these pre-Q-trees is relatively straightforward.  
    Note that, upon removing the root edge and the co-base edges, we are left with $i+1$ separate subtrees, each counted by a Catalan number.
    Therefore, the number of tree structures with $n+1$ leaves (excluding the leaf at the end of the final co-base edge), is equivalent to the $i+1$ convolution 
    \begin{align}
        \sum_{n_1+\cdots+n_{i+1}=n-i} \frac{1}{n_1+1}\binom{2n_1}{n_1}\cdots \frac{1}{n_{i+1}+1}\binom{2n_{i+1}}{n_{i+1}} = \frac{i+1}{n+1}\binom{2n-i}{n}. 
    \end{align}  
    For the labels of the pre-Q-trees, we distribute $n-j$ over the $n+1$ leaves, giving $\binom{2n-j}{n}$ options.
    \\\\
    \begin{figure}[p]
        \centering
        \includegraphics[width=\linewidth]{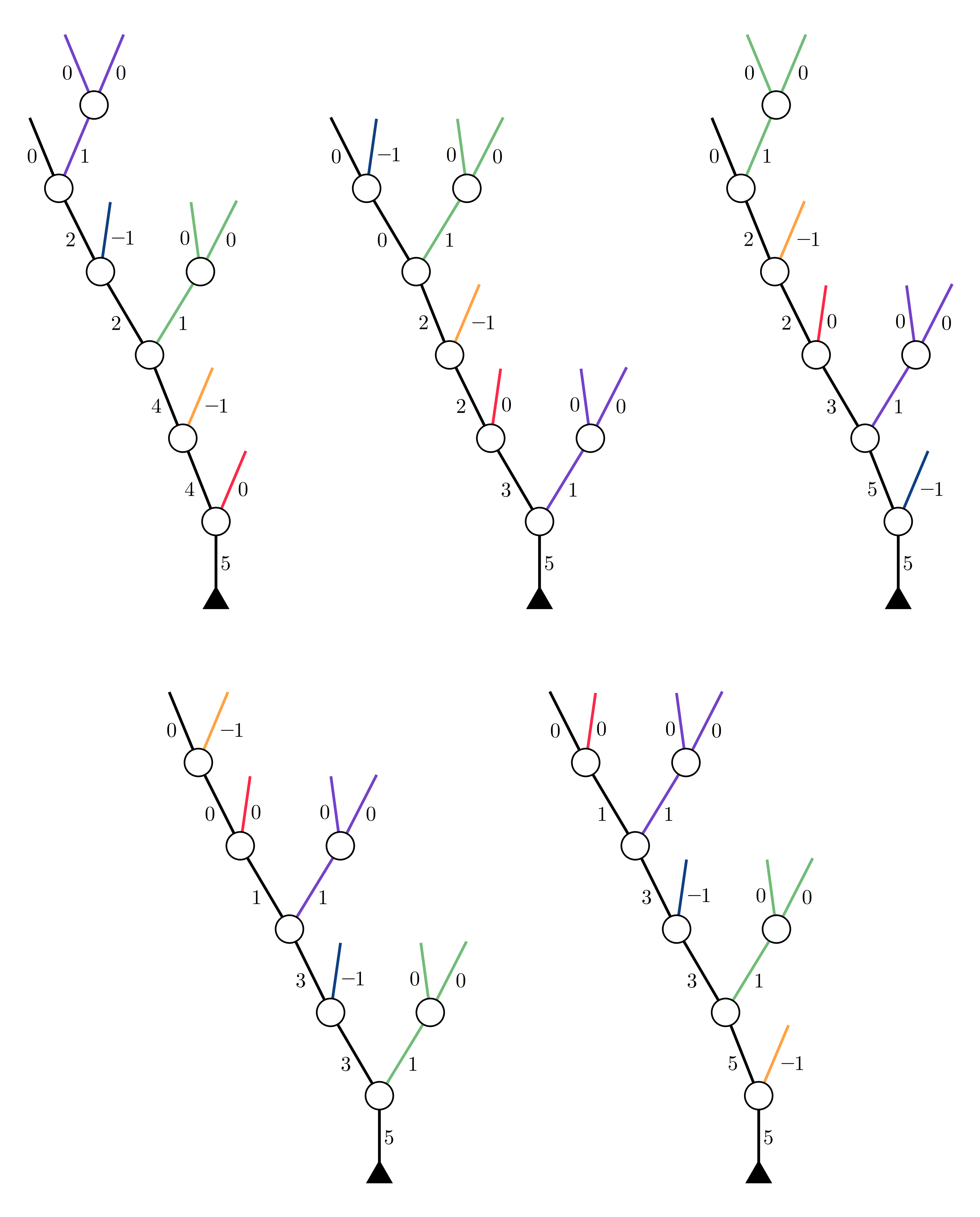}
        \caption{Cycling of subtrees to create a valid $\Delta$-tree. 
        Not that the first, third and fifth examples are valid.
        This agrees with the fact that the corresponding $\Delta$-type rigid quadrangulation has degeneracy~$3$.
        \label{fig:D_tree_cycle}}
    \end{figure}
    \begin{figure}
        \centering
        \includegraphics[width=\linewidth]{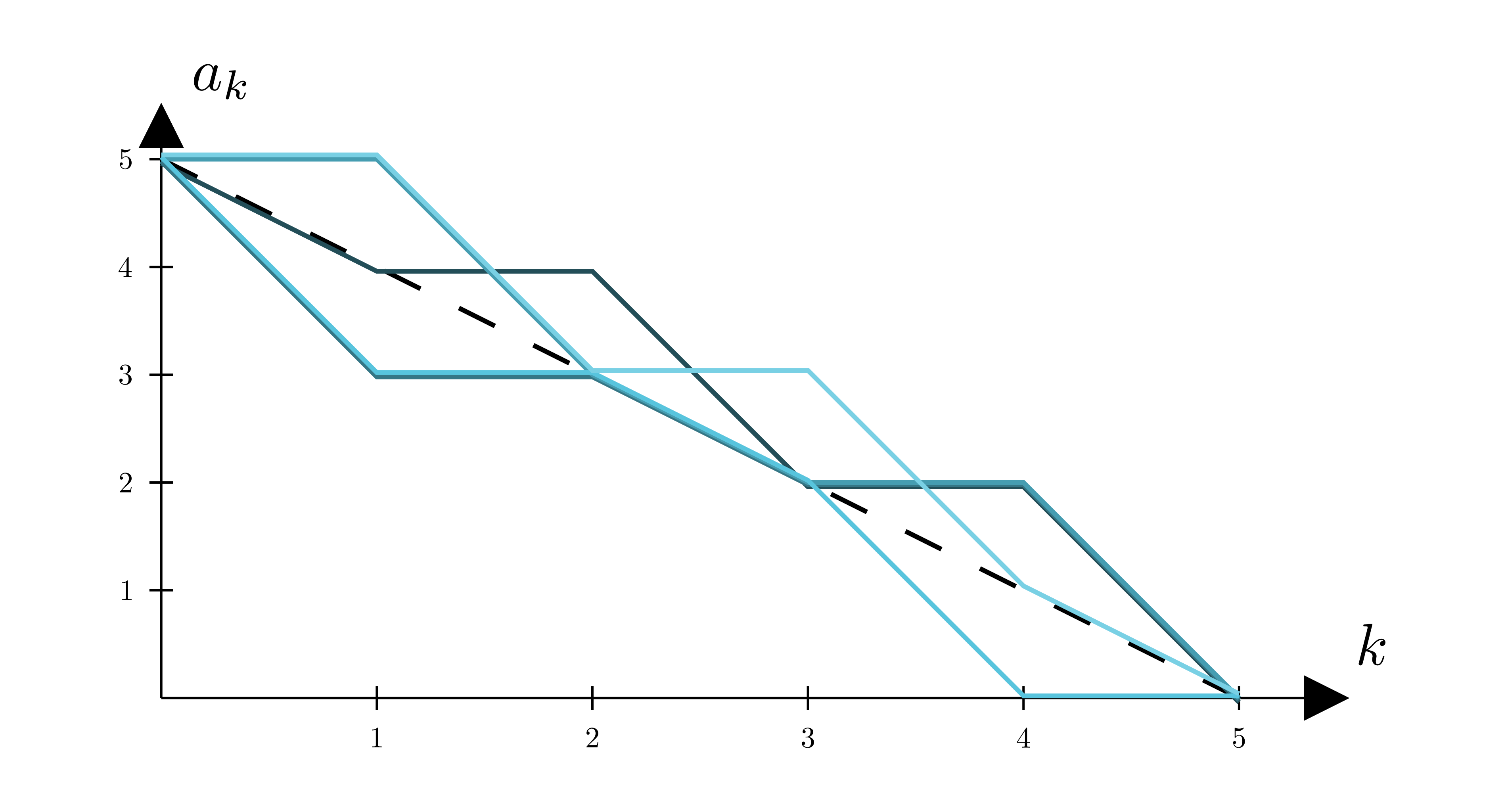}
        \caption{The labels $a_k$ can be seen as a walk from $a_0=p$ to $a_{i+1}=0$, where $i+1$ is the co-base-length. 
        Cycling subtrees corresponds to cycling the steps of this walk. 
        The $\Delta$-type rigid quadrangulation condition \eqref{eq:mid_D_req} corresponds to conditioning the walk to stay on or above the dashed line from start to end. 
        \label{fig:D_cycle_assist}}
    \end{figure}
    To satisfy \eqref{eq:mid_D_req}, we can cycle the subtrees. 
    If we have degeneracy $m$, we have $m$ ways to cycle the subtrees to satisfy \eqref{eq:mid_D_req}, see Figures~\ref{fig:D_tree_cycle} and \ref{fig:D_cycle_assist}, giving a $(i+1)$-to-$m$ mapping. 

    This means that the generating function of \emph{pre-}Q-trees that satisfy the conditions corresponding to $\Delta$-type rigid quadrangulations, weighted by the inverse of the degeneracy, is given by:
    \begin{align}
        \hat{\Delta}(t,x,y)=\sum_{n\geq0}\sum_{j=0}^n\sum_{i=0}^n\frac{1}{n+1}\binom{2n-i}{n}\binom{2n-j}{n}x^{i+1}y^{j+1}t^{n+1}.
    \end{align}
    We can use the same decomposition as before to go from pre-Q-trees to Q-trees giving the claimed generating function.
    We know that the decomposition will not cut at the $i$ consecutive left child edges, since they have (after cycling) strictly positive labels.
\end{proof}

\subsection{Counting B- and C-type rigid quadrangulations}
Using the generating function of $\Delta$-type rigid quadrangulations as building block, we can look at the B- and C-type rigid quadrangulations.
To prove Theorem~\ref{thm:D_B_C_gfs}, we need a small combinatorial lemma:
\begin{lemma}\label{lem:mult_sum}
    For all $m\geq0$, we have
    \begin{align}
        \sum_{\substack{n_k\geq0\\1n_1+2n_2+\dots=m}}\prod_{k\geq1}\frac{1}{k^{n_k}(n_k)!}=1.    
    \end{align}
\end{lemma}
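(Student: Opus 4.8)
The plan is to recognize the left-hand side as a coefficient in a generating-function identity involving the exponential function. First I would observe that $\sum_{k\geq 1} \frac{x^k}{k}$ has the formal power series expansion $-\log(1-x)$, so $\exp\!\left(\sum_{k\geq1}\frac{x^k}{k}\right) = \frac{1}{1-x}$. Expanding the exponential of a sum as a product of exponentials,
\begin{align}
\exp\!\left(\sum_{k\geq1}\frac{x^k}{k}\right) = \prod_{k\geq1}\exp\!\left(\frac{x^k}{k}\right) = \prod_{k\geq1}\sum_{n_k\geq0}\frac{x^{k n_k}}{k^{n_k}(n_k)!}.
\end{align}
Multiplying these series out, the coefficient of $x^m$ is precisely $\sum_{\substack{n_k\geq0\\ 1n_1+2n_2+\cdots=m}}\prod_{k\geq1}\frac{1}{k^{n_k}(n_k)!}$, which is the left-hand side of the claimed identity.

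On the other hand, $\frac{1}{1-x} = \sum_{m\geq0}x^m$, so every coefficient equals $1$. Comparing the coefficient of $x^m$ on both sides of $\exp\!\left(\sum_{k\geq1}\frac{x^k}{k}\right) = \frac{1}{1-x}$ then yields exactly the statement of the lemma. The $m=0$ case is the empty product equal to $1$, consistent with $[x^0]\frac{1}{1-x}=1$.

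There is essentially no obstacle here; the only point requiring a small amount of care is the legitimacy of manipulating the infinite product $\prod_{k\geq1}\exp(x^k/k)$ as a formal power series. This is fine because for each fixed $m$ only finitely many factors (those with $k\leq m$) contribute to $[x^m]$, and within each such factor only finitely many terms $n_k$ with $k n_k \leq m$ matter, so all sums involved are finite and the rearrangement is valid in $\Q[[x]]$. Alternatively, one can avoid infinite products entirely by truncating: work modulo $x^{m+1}$ with the finite product $\prod_{k=1}^{m}\exp(x^k/k)$, which already agrees with $\frac{1}{1-x}$ to that order since $\sum_{k=1}^m \frac{x^k}{k} \equiv -\log(1-x) \pmod{x^{m+1}}$. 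Either way, extracting $[x^m]$ gives the result.
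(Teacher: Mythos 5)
Your proof is correct and matches the paper's argument: both recognize the left-hand side as the coefficient of $x^m$ in $\prod_{k\geq1}\exp(x^k/k)=\exp\bigl(\sum_{k\geq1}x^k/k\bigr)=\frac{1}{1-x}$, whose coefficients are all $1$. The extra remarks on the formal-power-series validity of the infinite product are a harmless (and welcome) addition, but the approach is the same.
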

\begin{proof}
    The left-hand side is equal to the $x^m$ coefficient of the generating function
    \begin{align}
        \sum_{m\geq0}x^m\left(\sum_{\substack{n_k\geq0\\1n_1+2n_2+\dots=m}}\prod_{k\geq1}\frac{1}{k^{n_k}(n_k)!}\right)
        =\prod_{k\geq1}\left(\sum_{n_k\geq0}\frac{x^{k n_k}}{k^{n_k}(n_k)!}\right)
        =\prod_{k\geq1}\exp(x^k/k)
        =\exp(\sum_{k\geq1}x^k/k)
        =\frac{1}{1-x},
    \end{align}
    which is $1$ for all $m\geq0$.
\end{proof}

\begin{proof}[Proof of Theorem~\ref{thm:D_B_C_gfs}]
    The first part of the theorem is exactly Proposition~\ref{prop:D_counting}. 
    We will only need to prove the generating functions of B- and C-type rigid quadrangulations.

    \begin{figure}
        \centering
        \includegraphics[width=\linewidth]{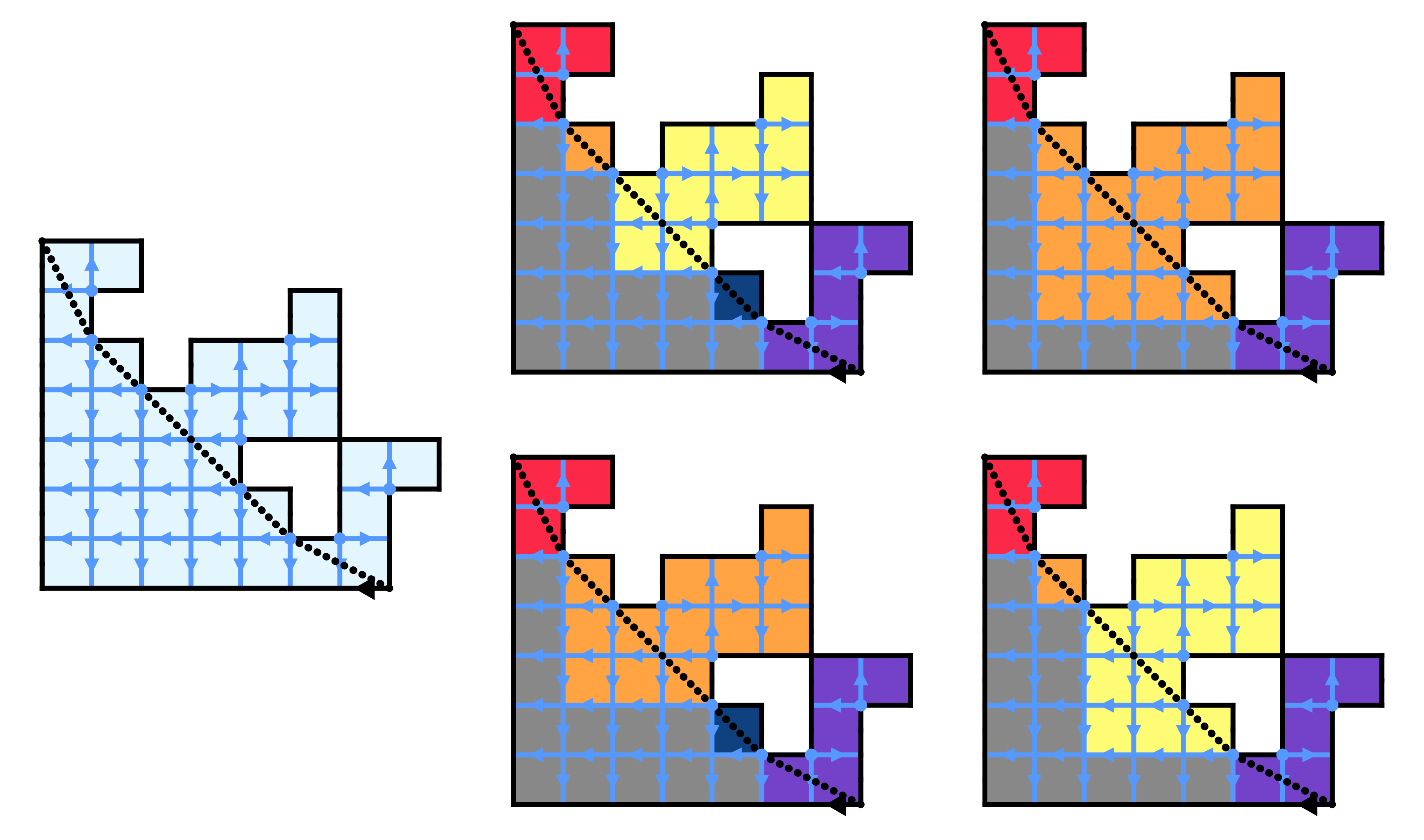}
        \caption{All 4 ways to decompose the B-type rigid quadrangulation on the left into $\Delta$-type rigid quadrangulations. 
        Note that the last decomposition is not considered in the proof of Theorem~\ref{thm:D_B_C_gfs}, since it has a $\Delta$-type rigid quadrangulation of slope~$-1$ and degeneracy~$1$ (in orange), followed by a $\Delta$-type rigid quadrangulation of the same slope, but with higher degeneracy (in yellow).
        The $\Delta$-type rigid quadrangulations have a natural ordering, except $\Delta$-type rigid quadrangulations with the same slope and degeneracy.
        \\
        Also note that the first decomposition is also the decomposition of the B-type rigid quadrangulation into C-type rigid quadrangulations.
        \label{fig:B_disk_decomps}}
    \end{figure}

    A B-type rigid quadrangulation can be naturally decomposed into $\Delta$-type rigid quadrangulations, using the shortest path $P$ the upper corner of the co-base to the right corner of the base, see Figure~\ref{fig:B_disk_decomps}.
    Note that there is a natural ordering when the slope of the path segments is different, but we have to be more careful when there the path has multiple segments with the same slope.
    In this case, there are multiple ways to decompose into $\Delta$-type rigid quadrangulations and the $\Delta$-type rigid quadrangulations can have degeneracies larger than $1$.
    We will show that these two effects exactly cancel.

    Let's consider $m$ segments of the path with the same slope.
    We will only allow partitions of these segments into a sequence of $\Delta$-type rigid quadrangulations, such that the degeneracy of each consecutive $\Delta$-type rigid quadrangulation is non-increasing. 
    We denote the number of $\Delta$-type rigid quadrangulations with degeneracy $k$ by $n_k$.
    Since we weight $\Delta$-type rigid quadrangulations by the inverse of their degeneracy, we get as total weight
    \begin{align}
        \sum_{\substack{n_k\geq0\\1n_1+2n_2+\dots=m}}\prod_{k\geq1}\frac{1}{k^{n_k}(n_k)!}, 
    \end{align}
    where the factorials are to compensate the overcounting of the different orderings of the $\Delta$-type rigid quadrangulations with the same degeneracy. 
    Due to Lemma~\ref{lem:mult_sum}, we get that the weight all these partitions, considering their degeneracies, exactly add up to $1$, so we can consider them effectively as a single $\Delta$-type rigid quadrangulation.

    We conclude that a B-type rigid quadrangulation can effectively be decomposed in a (non-empty) set of $\Delta$-type rigid quadrangulations that have a natural ordering, where the number of convex corners not incident to the (co-)base, and the (co-)base-length is additive. 
    This gives 
    \begin{align}
        B(t,x,y)&=\exp(\Delta(t,x,y))-1,
    \end{align}
    as claimed.
\\
\\
    For the generating function of C-type rigid quadrangulations, we observe that any B-type rigid quadrangulation is naturally decomposable into a non-empty sequence of C-type rigid quadrangulations, see again Figure~\ref{fig:B_disk_decomps}, giving
    \begin{align}
        B=\frac{C}{1-C},
    \end{align}
    which can be inverted to
    \begin{align}
        C=1-\frac{1}{1+B}=1-\exp(-\Delta).
    \end{align} 

    This completes the proof.
\end{proof}

\clearpage
\FloatBarrier
\section{Sampling and outlook}\label{sec:sampling_outlook}
\FloatBarrier
The bijections in this paper allow us to efficiently sample random rigid quadrangulations. 
Sampling some rigid quadrangulations for some large $n$ can give us some ideas of properties that random flat disk might have. 

Furthermore, the tree bijection itself might help to identify the relevant scaling needed, such that random rigid quadrangulations might converge for large $n$ and can even become a tool for proving such convergences or properties that we expect our limiting random object to have.

\subsection{Sampling}
We start by uniformly sampling a pre-Q-tree with base $p$ and size $n_{\textrm{max}}$.
Using the decomposition $\phi$ from Lemma~\ref{lem:decomp}, we get a Q-tree with base $p$ and random size $n$. 
We can use the bijections in this paper to transform this into a rigid quadrangulation.
This will give a random base-$p$ rigid quadrangulation. 
The resulting distribution of $n$ is an open question,\footnote{For $n_{\textrm{max}}\rightarrow\infty$ the distribution seems to converge to some discrete, heavy-tailed distribution.} but when we condition $n$ to be a fixed number, the resulting rigid quadrangulations will have a uniform distribution.  
See Figure~\ref{fig:sim_large_base}.
\\\\
Similarly, one can sample a $\Delta$-type rigid quadrangulation, by sampling a pre-Q-tree with the correct tree structure, cycling the subtrees,\footnote{When multiple cycles are possible, pick one uniformly} using the decomposition to get a Q-tree and then use the bijections to get a $\Delta$-type rigid quadrangulation.

Again, the distribution of $n$ is open, but conditioned on a fixed $n$, the result is a random $\Delta$-type rigid quadrangulation, distributed inversely proportional to its degeneracy.
See Figure~\ref{fig:sim_large_double_base}.
\begin{figure}
    \centering
    \includegraphics[width=\linewidth]{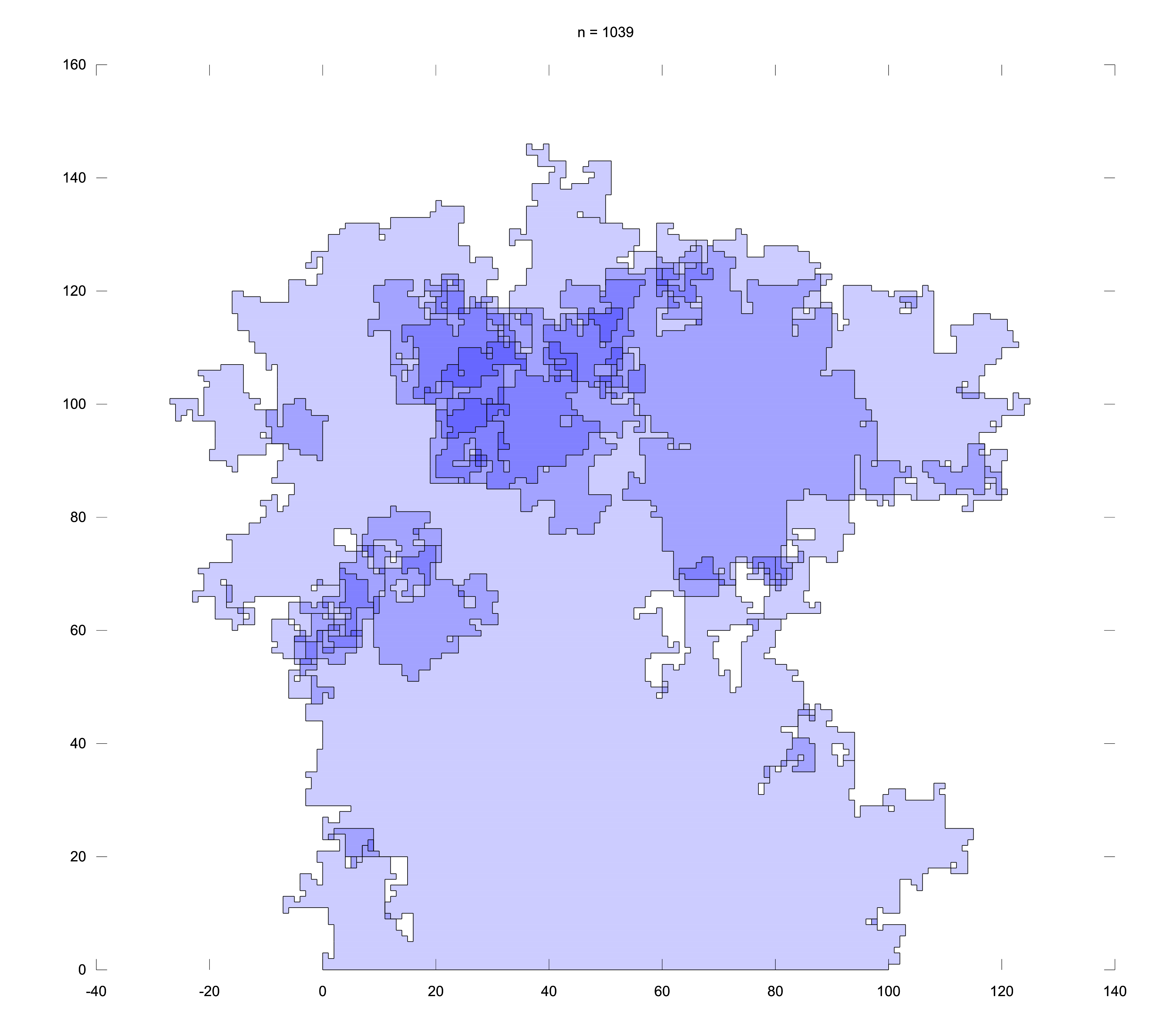}
    \caption{A sample of a uniform random rigid quadrangulation with $n=1039$ non-base convex corners and base-length $p=100$, immersed into the square grid.
    Darker colors correspond to higher number of overlapping parts in the immersion.
    \label{fig:sim_large_base}}
\end{figure}
\begin{figure}
    \centering
    \includegraphics[width=\linewidth]{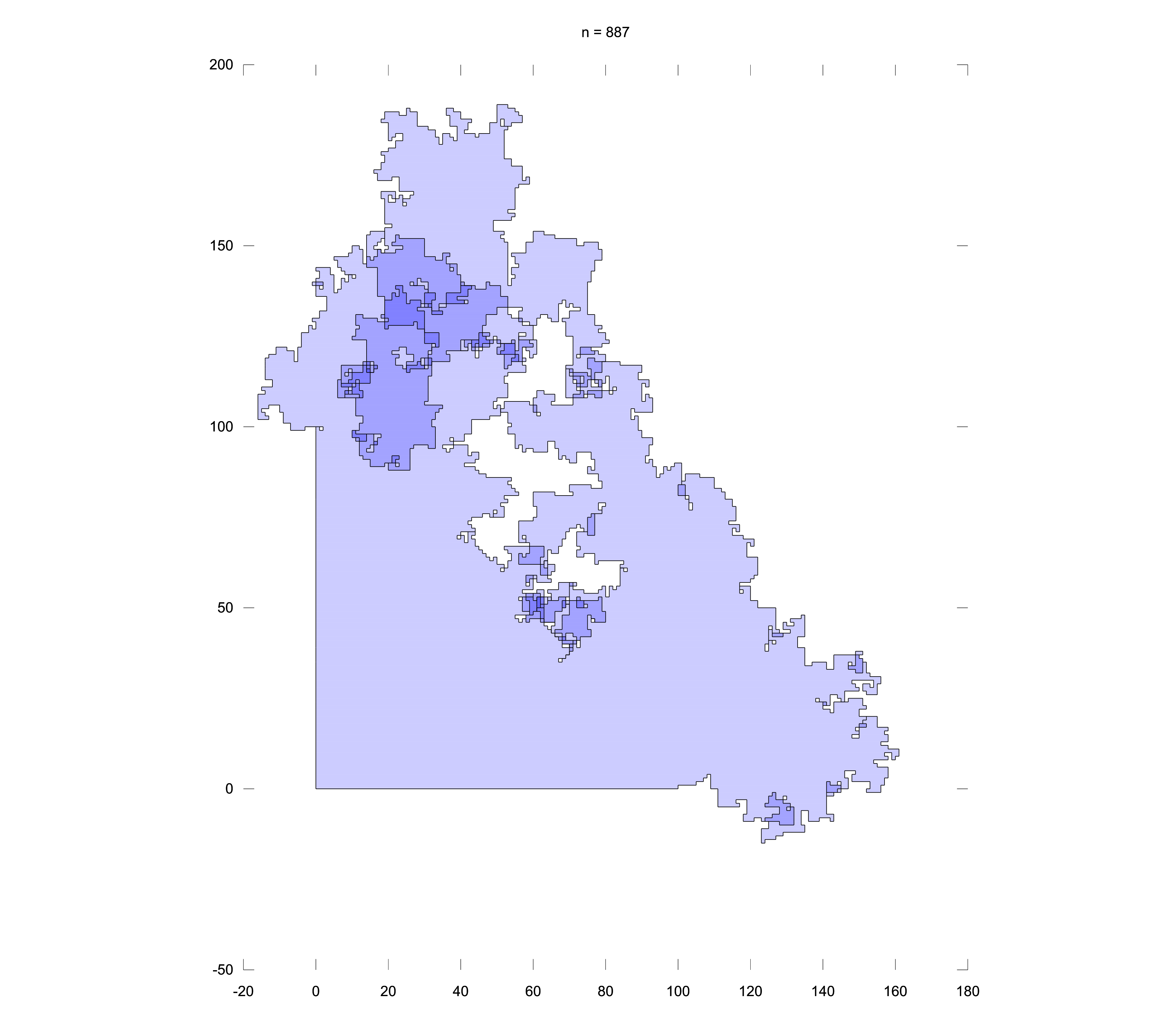}
    \caption{A sample of a uniform random $\Delta$-type rigid quadrangulation with $n=887$ non-base, non-co-base convex corners, base-length $p=100$ and co-base-length $q=100$, immersed into the square grid.
    Darker colors correspond to higher number of overlapping parts in the immersion.
    \label{fig:sim_large_double_base}}
\end{figure}

\subsection{Outlook}
\subsubsection*{Properties of the disk}
Although reproducing the generating functions in a bijective way is nice, we hope to get more insight into random rigid quadrangulations using the tree bijection. 
Some properties of the rigid quadrangulations, like diameter, winding statistics, and area have some (natural) counterparts in the H-trees and Q-trees.
Analyzing the properties of H- and Q-trees would thus be a natural way to study random rigid quadrangulations.

Furthermore, the duality between rigid quadrangulations of the disk and colorful quadrangulations of the sphere, opens the possibility to study random colorful quadrangulations using random \mbox{H-(/Q-)trees,} although it appears to be challenging to find properties of colorful quadrangulations that have a natural counterpart in the trees.

\subsubsection*{\texorpdfstring{$k$}{k}-fold base}
As noted before, for the base, we only consider a special case of Budd's \emph{$k$-fold base} in \cite{Budd_rectilinear_2025}. 
A natural question would be whether the tree bijection of this paper can be naturally extended to the rigid quadrangulations with a $k$-fold base for $k>1$.

\subsubsection*{Comparison to the suggested tree in \cite{Bousquet-Melou_generating_2020,Bousquetmelou2025refinedenumerationplanareulerian}}

In \cite{Bousquet-Melou_generating_2020}, later refined in \cite{Bousquetmelou2025refinedenumerationplanareulerian}, the authors note that the series $t-R(t)$ can be interpreted as the generating function of balanced binary trees with no balanced proper subtree.
Similarly, using the bijection in this paper, we can interpret this series as generating series of base-$0$ pre-Q-trees, where we removed \emph{proper} subtrees using the decomposition map similar to Lemma~\ref{lem:decomp}, which correspond to the removal of base-$0$ pre-Q-trees.

The similarities between the families of trees are clear: they are both binary trees, with a priori $\binom{2n}{n}$ possibilities for a decoration, which are in some sense balanced as a whole, but have balanced proper subtrees removed. 
It should therefore not be surprising that these trees have the same counting sequence.
Yet, finding a direct bijection between the two families is still an open question.

Finding this connection would benefit both trees. 
For the balanced trees, this would give an explicit bijection to colorful quadrangulations, although constructed with a bit of a detour. 
Furthermore, the (pre-)Q-trees are naturally extended to having a higher base, which interpretation is currently lacking of the balanced tree side.

On the other hand, in \cite{Bousquetmelou2025refinedenumerationplanareulerian} it is shown that number of local minima in colorful quadrangulations (which has a natural counterpart in rigid quadrangulations due to \cite{Budd_rectilinear_2025}), corresponds to a natural property of the balanced trees. 
Finding a relation between balanced trees and (pre-)Q-trees could lift this property, which currently cannot be achieved directly.

\subsubsection*{Large \texorpdfstring{$n$}{n}}

Especially for large number of convex corners $n$, the uniform random rigid quadrangulations are expected to converge to some universal random flat disk \cite{Ferrari_Random_Disk_overview_2025,Ferrari_Random_Disk_lattice_2024}. 
In \cite{Budd_rectilinear_2025} some asymptotics are computed which can help to identify this limit.
It is therefore natural to also study the H- and Q-trees in this limit. 
One can hope that the uniform random H-(/Q-)tree will converge to a decorated CRT, and/or that the limit of a random H-tree can be formulated in the language of a self-similar Markov tree \cite{Bertoin_SSMT_2024}, but these questions are still open.

\clearpage
\bibliographystyle{siam}
\bibliography{rectidisk}

\begin{thebibliography}{10}

\bibitem{Bertoin_SSMT_2024}
{\sc J.~Bertoin, N.~Curien, and A.~Riera}, {\em Self-similar markov trees and
  scaling limits}, arXiv-preprint arXiv:2407.07888,  (2025).

\bibitem{Bonichon_number_2017}
{\sc N.~Bonichon, M.~Bousquet-M\'{e}lou, P.~Dorbec, and C.~Pennarun}, {\em On
  the number of planar {E}ulerian orientations}, European J. Combin., 65
  (2017), pp.~59--91.

\bibitem{Bousquet-Melou_generating_2020}
{\sc M.~Bousquet-M\'{e}lou and A.~Elvey~Price}, {\em The generating function of
  planar {E}ulerian orientations}, J. Combin. Theory Ser. A, 172 (2020),
  pp.~105183, 48.

\bibitem{Bousquet-Melou_Eulerian_2020}
{\sc M.~Bousquet-M\'{e}lou, A.~Elvey~Price, and P.~Zinn-Justin}, {\em Eulerian
  orientations and the six-vertex model on planar maps}, S\'{e}m. Lothar.
  Combin., 82B (2020), pp.~Art. 70, 12.

\bibitem{Bousquetmelou2025refinedenumerationplanareulerian}
{\sc M.~Bousquet-Mélou and A.~E. Price}, {\em Refined enumeration of planar
  eulerian orientations}, arXiv-preprint arXiv:2503.15046,  (2025).

\bibitem{BDFG2004}
{\sc J.~Bouttier, P.~Di~Francesco, and E.~Guitter}, {\em Planar maps as labeled
  mobiles}, Electr. J. Comb., 11 (2004).

\bibitem{Bouttier_2009_quad_with_boundary}
{\sc J.~Bouttier and E.~Guitter}, {\em Distance statistics in quadrangulations
  with a boundary, or with a self-avoiding loop}, Journal of Physics A:
  Mathematical and Theoretical, 42 (2009), p.~465208.

\bibitem{Budd_rectilinear_2025}
{\sc T.~Budd}, {\em Discrete flat disks: rigid quadrangulations},
  arXiv-preprint,  (2025).

\bibitem{BZ_hyperbolic_tree_2025}
{\sc T.~Budd, T.~Meeusen, and B.~Zonneveld}, {\em A tree bijection for genus-0
  hyperbolic surfaces (and how to control geodesic distances)}, work in
  progress,  (2025+).

\bibitem{cori1981planar}
{\sc R.~Cori and B.~Vauquelin}, {\em Planar maps are well labeled trees},
  Canadian Journal of Mathematics, 33 (1981), pp.~1023--1042.

\bibitem{ElveyPrice2018counting}
{\sc A.~Elvey-Price and A.~J. Guttmann}, {\em Counting planar eulerian
  orientations}, European Journal of Combinatorics, 71 (2018), pp.~73--98.

\bibitem{Ferrari_Random_Disk_lattice_2024}
{\sc F.~Ferrari}, {\em Random disks of constant curvature: the lattice story},
  arXiv-preprint arXiv:2406.06875,  (2024).

\bibitem{Ferrari_Random_Disk_overview_2025}
{\sc F.~Ferrari}, {\em Jackiw-teitelboim gravity, random disks of constant
  curvature, self-overlapping curves, and liouville ${\mathrm{cft}}_{1}$},
  Phys. Rev. D, 111 (2025), p.~L061901.

\bibitem{LeGall2011Scaling}
{\sc J.-F. Le~Gall and G.~Miermont}, {\em Scaling limits of random planar maps
  with large faces}, The Annals of Probability, 39 (2011), pp.~1--69.

\bibitem{Miermont2013BrownianSphere}
{\sc G.~Miermont}, {\em The brownian map is the scaling limit of uniform random
  plane quadrangulations}, Acta Math, 210 (2013), pp.~319--401.

\bibitem{schaeffer1998conjugaison}
{\sc G.~Schaeffer}, {\em Conjugaison d'arbres et cartes combinatoires
  al{\'e}atoires}, PhD thesis, Bordeaux 1, 1998.

\bibitem{Tutte_1963}
{\sc W.~T. Tutte}, {\em A census of planar maps}, Canadian Journal of
  Mathematics, 15 (1963), p.~249–271.

\end{thebibliography}

\end{document}